\theoremstyle{plain}
\newtheorem{theorem}{Theorem}[section]
\newtheorem{lemma}[theorem]{Lemma}
\newtheorem{proposition}[theorem]{Proposition}
\newtheorem{corollary}[theorem]{Corollary}
\newtheorem{conjecture}[theorem]{Conjecture}
\theoremstyle{definition}
\newtheorem{definition}[theorem]{Definition}
\newtheorem{claim}{Claim}[theorem]
\newtheorem{remark}[theorem]{Remark}
\newcommand{\mcal}[1]{\ensuremath{\mathcal{#1}}}
\newcommand{\mbb}[1]{\ensuremath{\mathbb{#1}}}
\newcommand{\closure}[1]{\ensuremath{\langle #1 \rangle}}
\newcommand{\formula}[1]{\text{\fontfamily{cmss}\selectfont \textup{#1}}}
\newcommand{\dash}{\nobreakdash-\hspace{0pt}}
\newcommand{\cl}{\operatorname{cl}}
\newcommand{\bw}{\operatorname{bw}}
\newcommand{\dw}{\operatorname{dw}}
\newcommand{\ind}{\formula{Ind}}
\newcommand{\sing}{\formula{Sing}}
\newcommand{\emp}{\formula{Empty}}
\newcommand{\vertex}{\operatorname{Vert}}
\newcommand{\enc}{\operatorname{enc}}
\newcommand{\desc}{\operatorname{desc}}
\newcommand{\image}{\operatorname{Im}}
\newcommand{\Rep}{\operatorname{Rep}}
\newcommand{\mso}{\ensuremath{\mathit{MS}_{0}}}
\newcommand{\cmso}{\ensuremath{\mathit{CMS}_{0}}}
\newcommand{\msone}{\ensuremath{\mathit{MS}_{1}}}
\newcommand{\mstwo}{\ensuremath{\mathit{MS}_{2}}}
\newcommand{\hgg}{$H$\dash gain-graphic}
\newcommand{\ba}{\backslash}
\newcommand{\dep}{\formula{dep}}
\newcommand{\indep}{\formula{indep}}
\title[Tree automata and matroids]
{Tree automata and pigeonhole classes of matroids: I}
\author[Funk]{Daryl Funk}
\address{Department of Mathematics,
Douglas College, Vancouver.}
\email{funkd@douglascollege.ca}
\author[Mayhew]{Dillon Mayhew$^{*}$}
\address{School of Mathematics and Statistics,
Victoria University of Wellington,
New Zealand.
Corresponding author: dillon.mayhew@vuw.ac.nz.}
\email[Corresponding author]{dillon.mayhew@vuw.ac.nz}
\author[Newman]{Mike Newman}
\address{Department of Mathematics and Statistics,
University of Ottawa.}
\email{mnewman@uottawa.ca}
\date{\today}
\begin{document}

\begin{abstract}
Hlin\v{e}n\'{y}'s Theorem shows that any sentence in the
monadic second-order logic of matroids can be tested in
polynomial time, when the input is limited to a class of
\mbb{F}\dash representable matroids with bounded branch-width
(where \mbb{F} is a finite field).
If each matroid in a class can be decomposed by a subcubic tree
in such a way that only a bounded amount of information
flows across displayed separations, then the class
has bounded decomposition-width.
We introduce the pigeonhole property for classes of matroids:
if every subclass with bounded branch-width also has
bounded decomposition-width, then the class is pigeonhole.
An efficiently pigeonhole class has a stronger property,
involving an efficiently-computable equivalence relation on subsets of the ground set.
We show that Hlin\v{e}n\'{y}'s Theorem extends to any
efficiently pigeonhole class.
In a sequel paper, we use these ideas to extend Hlin\v{e}n\'{y}'s Theorem
to the classes of fundamental transversal matroids, lattice path matroids,
bicircular matroids, and \hgg\ matroids, where $H$ is any finite group.
We also give a characterisation of the families of hypergraphs that can be
described via tree automata: a family is defined by a tree automaton
if and only if it has bounded decomposition-width.
Furthermore, we show that if a class of matroids has the pigeonhole
property, and can be defined in monadic second-order logic,
then any subclass with bounded branch-width has a
decidable monadic second-order theory.
\end{abstract}

\maketitle

\section{Introduction}

The \emph{model-checking} problem involves
a class of structures and a logical language
capable of expressing statements about those structures.
We consider a sentence from the language.
The goal is a procedure which will decide whether or not the sentence
is satisfied by a given structure from the class.
Our starting points are the model-checking meta-theorems due to Courcelle \cite{Cou90} and Hlin\v{e}n\'{y} \cite{Hli06c}.
Courcelle's Theorem proves that there is an efficient model-testing procedure
for any sentence in monadic second-order logic, when the
input class consists of graphs with bounded structural complexity.
Hlin\v{e}n\'{y} proves an analogue for matroids representable over finite fields.

Both theorems provide algorithms that are not only polynomial-time,
but \emph{fixed-parameter tractable} (see \cite{DF99}).
This means that the input contains a numerical parameter, $\lambda$.
The notion of fixed-parameter tractability captures the distinction between
running times of order $n^{f(\lambda)}$ and those of order $f(\lambda)n^{c}$,
where $n$ is the size of the input, $f(\lambda)$ is a value depending only on $\lambda$,
and $c$ is a constant.
When we restrict to a fixed value of $\lambda$, both running times are
polynomial with respect to $n$, but algorithms of the latter type will typically be feasible for a
larger range of $\lambda$\dash values.
An algorithm with a running time of $O(f(\lambda)n^{c})$ is said to be
fixed-parameter tractable with respect to $\lambda$.

\begin{theorem}[Courcelle's Theorem]
\label{bridge}
Let $\psi$ be a sentence in \mstwo.
We can test whether graphs satisfy $\psi$ with an algorithm that is fixed-parameter tractable with respect to tree-width.
\end{theorem}

The monadic second-order logic \mstwo\ allows us to
quantify over variables representing vertices, edges, sets of vertices, and
sets of edges.
$\text{NP}$\dash complete properties such as
Hamiltonicity and $3$\dash colourability can be
expressed in \mstwo.
Courcelle's Theorem shows that the extra structure imposed by bounding the
tree-width of input graphs transforms these properties from
being computationally intractable to tractable.

\begin{theorem}[Hlin\v{e}n\'{y}'s Theorem]
\label{record}
Let $\psi$ be a sentence in \cmso\ and let \mbb{F} be a finite field.
We can test whether \mbb{F}\dash representable matroids
satisfy $\psi$ with an algorithm that is fixed-parameter tractable with respect to branch-width.
\end{theorem}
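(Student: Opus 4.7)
The plan is to prove the theorem by building a finite tree automaton on a branch-decomposition of the input matroid, where the states encode enough information about each subtree to evaluate $\psi$. Following the conceptual framework announced in the introduction, I would show that the class of $\mbb{F}$\dash representable matroids is \emph{efficiently pigeonhole}, and then invoke the paper's meta-theorem that any such class admits an FPT $\cmso$ model-checking algorithm.

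First, I would compute a branch-decomposition of the input matroid of width within a constant factor of $\bw(M)$, using the algorithm of Oum and Seymour (which requires only an independence oracle) or, for $\mbb{F}$\dash representable matroids, a sharper variant that works directly on a matrix representation. Each internal edge of the resulting subcubic tree exposes a displayed separation $(A,B)$ of the ground set. The key observation is that the interaction between $A$ and $B$ in an $\mbb{F}$\dash representation is governed by the dimension of the intersection of the column spans of $A$ and $B$, which is at most the branch-width. Since $\mbb{F}$ is finite and this controlling subspace has bounded dimension, up to change of basis there are only finitely many possible boundary configurations. Declaring two subsets equivalent when their boundary configurations coincide yields an equivalence relation with boundedly many classes, and a canonical representative can be extracted from the representation in polynomial time; this verifies the efficiently pigeonhole property.

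With that in hand, the second part is automata-theoretic and is the content of the general meta-theorem. For each subformula of $\psi$ and each equivalence class, introduce a ``type'' that records which assignments to free set-variables are realisable on a subtree of that class. A transition function combines the types of the two children of each internal node into the type of that node, processing the decomposition bottom-up; the root's type then decides whether $M \models \psi$. Because the number of types depends only on $|\psi|$ and $\bw(M)$, and each transition is constant time once tabulated, the total running time is $f(|\psi|,\bw(M))\cdot |E(M)|^{O(1)}$.

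The principal obstacle is the first step: confirming both that the equivalence relation has boundedly many classes and that the class of a given subset can be computed in polynomial time. The bounded-class part is a linear-algebraic argument that relies essentially on the finiteness of $\mbb{F}$ — it is precisely here that the theorem fails for infinite fields. The polynomial-time computability requires a canonical form for bounded-dimensional configurations of columns over $\mbb{F}$. The lifting from a bounded equivalence relation to a $\cmso$ model-checker is handled once and for all by the general framework, rather than ad hoc for representable matroids, and this is the structural payoff that motivates the pigeonhole formalism introduced in the paper.
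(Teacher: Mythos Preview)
Your proposal is correct and matches the paper's treatment. Note that \Cref{record} is stated here as a known result of Hlin\v{e}n\'{y} rather than proved independently; the paper recovers it as the special case of \Cref{lounge} in which $\mcal{M}$ is the class of $\mbb{F}$\dash representable matroids, shown to be efficiently pigeonhole in the sequel \cite{FMN-II} via precisely the bounded-boundary linear-algebra argument you sketch, after which \Cref{jibber} and \Cref{ermine} supply the automaton-based model-checker --- exactly the route you outline.
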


The \emph{counting monadic second-order language} \cmso\ is described in \Cref{logic}.
Our main theorem identifies the structural properties underlying the proof of
Hlin\v{e}n\'{y}'s Theorem.

\begin{theorem}
\label{lounge}
Let \mcal{M} be an efficiently pigeonhole class of matroids.
Let $\psi$ be a sentence in \cmso.
We can test whether matroids in \mcal{M} satisfy $\psi$ with an algorithm that is fixed-parameter tractable with respect to branch-width.
\end{theorem}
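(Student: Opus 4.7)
The plan is to follow the architecture of Hlin\v{e}n\'{y}'s proof of \Cref{record}, replacing the specific use of $\mbb{F}$\dash representability with the abstract decomposition machinery guaranteed by the efficiently pigeonhole hypothesis. Representability enters Hlin\v{e}n\'{y}'s argument only to supply a bounded-size description of the ``type'' of each subtree of a branch decomposition (encoded via matrices over $\mbb{F}$). Decomposition-width is precisely the abstraction of this phenomenon, and the \emph{efficient} part of efficiently pigeonhole is precisely what is required to make such types computable on the fly.

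First, given $M\in\mcal{M}$ I would compute, in FPT time, a subcubic branch decomposition $T$ of $M$ of width within a constant factor of $\bw(M)$. The definition of an efficiently pigeonhole class should provide enough access to the matroid (through the equivalence oracle) for this to be feasible; otherwise one assumes the decomposition is supplied with the input. Second, traverse $T$ bottom-up, using the efficiently-computable equivalence relation on subsets of the ground set to label each edge of $T$ with a canonical representative of the equivalence class of the subset displayed at that edge. Because $\mcal{M}$ is efficiently pigeonhole and $\bw(M)$ is bounded, only boundedly many equivalence classes can occur, and each label is computable in polynomial time from the labels of its children.

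Third --- the heart of the argument --- compile the sentence $\psi$ into a finite tree automaton $\mcal{A}_\psi$ whose input alphabet is the (bounded) set of equivalence-class labels and whose acceptance on $T$ coincides with $M\models\psi$. This relies on the companion result announced in the abstract: a family of hypergraphs is tree-automaton-definable if and only if it has bounded decomposition-width. One would prove the $\cmso$-to-automaton direction by induction on the formula, showing that atomic predicates (including modular counting), Boolean connectives, and set quantification each preserve recognisability by an automaton whose state space is indexed by tuples of decomposition-width equivalence classes. The automaton $\mcal{A}_\psi$ depends only on $\psi$ and the parameter, so it may be constructed offline; running it on the labelled $T$ then takes time linear in $|T|$, yielding the required $f(\psi,\bw(M))\cdot n^{c}$ bound.

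The main obstacle is the third step: the uniform passage from a $\cmso$ sentence to a tree automaton that respects the decomposition-width equivalence. Unlike Courcelle's classical argument, whose atomic operations (vertex--edge incidence, edge adjacency) interact cleanly with tree decompositions, here the atomic notions are matroid-theoretic (dependence, closure, rank modulo an integer), and one must verify that each remains recognisable after contraction by the decomposition-width congruence. Once this compilation is established, combining it with the online labelling produced by the efficiently pigeonhole oracle gives the claimed fixed-parameter tractable algorithm.
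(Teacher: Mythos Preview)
Your three-step plan matches the paper's proof exactly: \Cref{lychee} computes an approximate branch decomposition from the succinct representation, \Cref{jibber} uses the efficiently pigeonhole oracle to turn it into a parse tree over a bounded alphabet, and \Cref{family} together with \Cref{ermine} compiles $\psi$ into an automaton and runs it in linear time. Your worry about the third step is misplaced, however: in \cmso\ the only matroid-specific atomic predicate is $\ind(X)$ (closure and rank are not primitive but definable from $\ind$), so once step two yields an automaton deciding independence, the compilation of $\psi$ is pure automata theory---closure under negation, conjunction, projection, and modular counting---and never again touches the matroid structure.
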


\Cref{lounge} is proved by \Cref{ermine} and \Cref{jibber}.
In a sequel \cite{FMN-II}, we will prove that we can now
extend Hlin\v{e}n\'{y}'s Theorem to several natural classes of matroids.
Fundamental transversal matroids,
lattice path matroids, bicircular matroids, 
and \hgg\ matroids with $H$ a finite group:
all these classes have fixed-parameter tractable algorithms for \cmso\ model-checking, where the parameter is branch-width.

The pigeonhole property is motivated by matroids representable over finite fields.
Let $(U,V)$ be a separation of order at most $\lambda$ in $M$,
a simple matroid representable over a finite field, \mbb{F}.
We can think of $M$ as a subset of points in the projective
space $P=\operatorname{PG}(r(M)-1,\mbb{F})$.
The subspaces of $P$ spanned by $U$ and $V$ intersect in a
subspace, $P'$, with affine dimension at most $\lambda-2$.
If $X$ and $X'$ are subsets of $U$, and their spans intersect
$P'$ in the same subspace, then no subset of $V$
can distinguish them.
By this we mean that both $X\cup Z$ and $X'\cup Z$ are independent
or both are dependent, for any subset $Z\subseteq V$.
This induces an equivalence relation on subsets of $U$.
The number of classes under this relation is at most the
number of subspaces of $\operatorname{PG}(\lambda-2,\mbb{F})$.

Now we generalise this idea.
Let $E$ be a finite set, and let \mcal{I} be a collection of subsets.
If $U$ is a subset of $E$, then $\sim_{U}$ is the equivalence relation
on subsets of $U$ such that $X\sim_{U} X'$ if no subset of $E-U$
can distinguish between $X$ and $X'$; that is,
for all $Z\subseteq E-U$, both $X\cup Z$ and $X'\cup Z$
are in \mcal{I}, or neither of them is.
A set-system, $(E,\mcal{I})$, has \emph{decomposition-width} at most $q$ if
there is a subcubic tree with leaves in bijection with $E$,
such that if $U$ is any set displayed by the tree, then $\sim_{U}$ has
at most $q$ equivalence classes.
Since every matroid is a set-system, the decomposition-width
of a matroid is a natural specialisation.
This notion of decomposition-width is equivalent to that used by
Kr\'{a}l \cite{Kra12} and by Strozecki \cite{Str10,Str11},
although our definition is cosmetically quite different.

\Cref{lounge} relies on tree automata to check whether
monadic sentences are satisfied.
(As do the theorems of Courcelle and Hlin\v{e}n\'{y}.)
Tree automata also provide further evidence that the
notion of decomposition-width is a natural one, as
we see in the next \namecref{mtheorem1}.
In our conception, a tree automaton processes
a tree from leaves to root, applying a state to each node.
Each node of the tree is initially labelled with a character from a
finite alphabet, and the state applied to a node depends
on the character written on that node, as well as the
states that have been applied to its children.
The automaton accepts or rejects the tree according to the state
it applies to the root.
The characters applied to the leaves 
can encode a subset of the leaves, so we can
think of the automaton as either accepting or rejecting each
subset of the leaves.
Thus each tree automaton gives rise to a family of set-systems.
The ground set of such a set-system is the set of leaves of
a tree, and a subset belongs to the system if it
is accepted by the automaton.
We say that a family of set-systems is
\emph{automatic} if there is an automaton which
produces the family in this way (\Cref{dancer}).
It is natural to ask which families of set-systems are
automatic, and we answer this question in \Cref{mainproof}.

\begin{theorem}
\label{mtheorem1}
A class of set-systems is automatic if and only if it has
bounded decomposition-width.
\end{theorem}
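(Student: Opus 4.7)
The plan is to prove both directions of the biconditional. The forward direction (automatic implies bounded decomposition-width) is essentially a single observation about the automaton's state space, while the converse requires constructing a uniform automaton from the decomposition data and is where the substance lies.

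For the forward direction, suppose the class is produced by a tree automaton $A$ with finite state set $Q$, processing subcubic trees. Consider any set-system $(E,\mcal{I})$ in the class, realised on a tree $T$ whose leaves are in bijection with $E$. Let $U$ be a set displayed by $T$, arising from a node $v$ whose leaf-descendants form $U$. For each $X\subseteq U$, encode $X$ on the leaves of the subtree $T_{v}$ and run $A$ bottom-up; this produces some state $q_{X}\in Q$ at the root of $T_{v}$. The eventual accept/reject verdict on $X\cup Z$, for any $Z\subseteq E-U$, depends only on $q_{X}$ together with the labels on $E-U$. Hence any two subsets $X,X'\subseteq U$ with $q_{X}=q_{X'}$ satisfy $X\sim_{U}X'$, so $\sim_{U}$ has at most $|Q|$ classes. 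This gives decomposition-width at most $|Q|$, uniformly across the class.

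For the converse, suppose every member $(E,\mcal{I})$ of the class admits a subcubic decomposition tree on which each displayed set has at most $q$ equivalence classes of $\sim_U$. I would build one tree automaton $A$ that, on any such tree, accepts exactly the subsets in $\mcal{I}$. The idea is to make the state at a node $v$ with displayed set $U_{v}$ encode two pieces of data: (i) the combinatorial \emph{type} of the restriction of $(E,\mcal{I})$ to $T_{v}$, namely an abstract description of $\sim_{U_{v}}$ together with a canonical labelling of its (at most $q$) classes by $\{1,\dots,q\}$; and (ii) the label, under that canonical labelling, of the class containing the partial subset $X\cap U_{v}$ that has been chosen so far. Transitions combine sibling states by forming pairs of child classes and quotienting by the induced equivalence on $U_{v}$, an operation that is determined entirely by the children's types. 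The acceptance condition at the root looks up whether the current class corresponds to membership in $\mcal{I}$.

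The main obstacle is controlling the converse construction. First, one must show that there are only finitely many such types, bounded by a function of $q$, so that the state set is finite. Second, the local transitions must correctly reconstruct the global equivalence relation. The cleanest route is a Myhill--Nerode-style argument on ``decorated subtrees'': declare two pairs $(T_{v},X_{v})$ and $(T_{w},X_{w})$ equivalent whenever they produce indistinguishable outcomes in every outside context, use the decomposition-width hypothesis to bound the number of equivalence classes by some $f(q)$, and take these classes as the states of $A$. The transitions and final acceptance condition then read off directly from how decorated subtrees compose, giving the required automaton.
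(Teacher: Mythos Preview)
Your forward direction matches the paper's argument. Two small points you gloss over: the automaton should first be made deterministic (otherwise $q_{X}$ is a set of states, not a single state), and your bound handles $\sim_{U}$ only for the side $U$ of leaves below $v$; the complementary displayed set needs a separate step (the paper uses \Cref{shadow} to get $2^{|Q|}$ there). Neither point is serious.

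The converse has a genuine gap. You claim that combining a $\sim_{U_{L}}$-class with a $\sim_{U_{R}}$-class into a $\sim_{U_{L}\cup U_{R}}$-class is ``determined entirely by the children's types''. It is not. On $E=\{a,b,c,d\}$ with the tree grouping $\{a,b\}$ against $\{c,d\}$, take $\mcal{I}_{1}=\{\emptyset,\{a\},\{c\},\{a,c\}\}$ and $\mcal{I}_{2}=\{X:|X|\ \text{even}\}$. In both systems each of $\sim_{\{a,b\}}$ and $\sim_{\{c,d\}}$ has exactly two classes, so the children's types agree; yet at the root the pair (class~$2$, class~$2$) is rejected by the first system and accepted by the second. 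That combination table is instance-specific and cannot be manufactured by a fixed transition function from the children's types alone---it has to be read off the parse tree.

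This is exactly the idea the paper supplies and that you are missing. In the paper's construction the alphabet $\Sigma$ consists of \emph{functions}: the label $\sigma_{M}(u)$ at a node $u$ records, for that particular $M$ and decomposition, the map sending a pair of child class-labels to the resulting class-label at $u$ (or to $\indep$/$\dep$ when $u$ is the root). The automaton's transition rule is then trivial---it just evaluates the function written on the node---so the state space is merely $\{\indep,\dep,q_{1},\ldots,q_{K}\}$. Your Myhill--Nerode alternative has the same defect: a pair $(T_{v},X_{v})$ with an unlabelled $T_{v}$ carries no information about which set-system is in play, so there is nothing for the ``contexts'' to interact with; once you put appropriate labels on the nodes you have essentially recovered the paper's construction.
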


A class of matroids with bounded decomposition-width must
have bounded branch-width (\Cref{whaler}).
The converse does not hold (\cite[Lemma 4.1]{FMN-II}).
If \mcal{M} is a class of matroids and every subclass with
bounded branch-width also has bounded decomposition-width,
then \mcal{M} is \emph{pigeonhole} (\Cref{juicer}).
The class of lattice path matroids has the pigeonhole
property (\cite[Theorem 7.2]{FMN-II}).
Other natural classes have an even stronger property.
Let \mcal{M} be a class of matroids.
Assume there is a value, $\pi(\lambda)$, for every positive integer
$\lambda$, such that the following holds:
We let $M$ be a matroid in \mcal{M}, and we let $U$ be a subset of $E(M)$.
If $\lambda_{M}(U)$, the connectivity of $U$, is at most $\lambda$, then
$\sim_{U}$ has at most $\pi(\lambda)$ equivalence classes.
Under these circumstances, we say \mcal{M} is
\emph{strongly pigeonhole} (\Cref{fizzer}).
The matroids representable over a finite field
(\cite[Theorem 5.1]{FMN-II}) and fundamental transversal matroids
(\cite[Theorem 6.3]{FMN-II}) are strongly pigeonhole classes.
An \emph{efficiently pigeonhole} class is strongly
pigeonhole, and has the additional property that
we can efficiently compute a relation that refines $\sim_{U}$
(\Cref{yakuza}).

Now we describe the structure of this article.
\Cref{pigeonhole} discusses decomposition-width, 
pigeonhole classes, and strongly pigeonhole classes.
In \Cref{logic} we describe the monadic logic \cmso.
\Cref{automatic} develops the necessary tree automaton ideas.
\Cref{mainproof} is dedicated to the proof of \Cref{mtheorem1}.
In \Cref{complexity} we prove \Cref{lounge}.
We also show that \Cref{lounge} holds under
the weaker condition that the $3$\dash connected matroids
in \mcal{M} form an efficiently pigeonhole class
(\Cref{marker}).
(However, we require that we can efficiently compute a
description of any minor of the input matroid, so this is not
a true strengthening of \Cref{lounge}.)
\Cref{marker} is necessary because we do not know that bicircular
matroids or \hgg\ matroids (with $H$ finite) form
efficiently pigeonhole classes.
(Although we conjecture this is the case \cite[Conjecture 9.3]{FMN-II}.)
However, we do know that the subclasses consisting of $3$\dash connected matroids are efficiently pigeonhole (\cite[Theorem 8.4]{FMN-II}).

In the final section (\Cref{decidability}), we consider the
question of \emph{decidability}.
A class of set-systems has a decidable monadic second-order
theory if there is a Turing Machine
(not time-constrained) which will take any sentence
as input, and decide whether it is satisfied by all systems in the
class.
The main result of this section says that if a
class of matroids has the pigeonhole
property and can be defined by a sentence in
monadic second-order logic, then any subclass with
bounded branch-width has a decidable theory (\Cref{quaker}).
The special case of \mbb{F}\dash representable matroids
(\mbb{F} finite) has been noted by Hlin\v{e}n\'{y} and
Seese \cite[Corollary 5.3]{HS06}.
On the other hand, the class of
\mbb{K}\dash representable rank\dash $3$ matroids has
an undecidable theory when \mbb{K} is an infinite field
(\Cref{pulpit}).

A good introduction to automata can be found in \cite{Eng15}.
For the basic concepts and notation of matroid theory, we rely on
\cite{Oxl11}.
Recall that if $M$ is a matroid, and
$(U,V)$ is a partition of $E(M)$, then
$\lambda_{M}(U)$ is $r_{M}(U)+r_{M}(V)-r(M)$.
Note that $\lambda_{M}(U)=\lambda_{M}(V)$ and
$\lambda_{M}(U)\leq r(M)$.
A set, $U$, is \emph{$k$\dash separating} if $\lambda_{M}(U)<k$,
and a \emph{$k$\dash separation} is a partition, $(U,V)$, of the
ground set such that $|U|,|V|\geq k$, and both $U$ and $V$ are
$k$\dash separating.

\section{Pigeonhole classes}
\label{pigeonhole}

Now we introduce one of our principal definitions.
A class of set-systems has bounded decomposition-width if those
set-systems can be decomposed by subcubic trees in such a way that
only a bounded amount of information flows
across any of the displayed separations.
This section is dedicated to formalising these ideas.

\begin{definition}
\label{hybrid}
A \emph{set-system} is a pair $(E,\mcal{I})$ where $E$ is a finite set
and \mcal{I} is a family of subsets of $E$.
We refer to $E$ as the \emph{ground set}, and the members of \mcal{I}
as \emph{independent} sets.
\end{definition}

In some circumstances, a set-system might be called a hypergraph and the independent sets
might be called hyperedges.
We prefer more matroid-oriented language.

\begin{definition}
\label{fracas}
Let $(E,\mcal{I})$ be a set-system, and let $U$ be a subset of $E$.
Let $X$ and $X'$ be subsets of $U$.
We say $X$ and $X'$ are \emph{equivalent} (relative to $U$),
written $X\sim_{U} X'$, if for every subset $Z\subseteq E-U$,
the set $X\cup Z$ is in \mcal{I} if and only if $X'\cup Z$ is in \mcal{I}.
\end{definition}

Informally, we think of $X\sim_{U} X'$ as meaning that no subset of $E-U$ can
`distinguish' between $X$ and $X'$.
It is clear that $\sim_{U}$ is an equivalence relation on subsets of $U$.
Note that by taking $Z$ to be the empty set, we can see that no member
of \mcal{I} is equivalent to a subset not in \mcal{I}.
Assuming that \mcal{I} is closed under subset containment
(as would be the case if \mcal{I} were the family of
independent sets in a matroid), then all subsets of $U$ that are not in \mcal{I}
are equivalent.

\begin{proposition}
\label{icecap}
Let $(E,\mcal{I})$ be a set-system and let $U$ and $V$ be disjoint subsets of $E$.
If $X\sim_{U} X'$ and $Y\sim_{V} Y'$, then
$(X\cup Y)\sim_{(U\cup V)} (X'\cup Y')$.
In particular, $X\cup Y$ belongs to \mcal{I} if and only
if $X'\cup Y'$ does.
\end{proposition}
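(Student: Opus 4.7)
The plan is to reduce the two-step equivalence to two applications of the single-variable definition (\Cref{fracas}), interpolating $X'\cup Y$ between $X\cup Y$ and $X'\cup Y'$. The key observation is that $E-(U\cup V)=W$, so the sets $Z\subseteq E-(U\cup V)$ that witness equivalence relative to $U\cup V$ are precisely subsets of $W$.

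First I would fix an arbitrary $Z\subseteq W$ and argue that $X\cup Y\cup Z\in\mcal{I}$ if and only if $X'\cup Y\cup Z\in\mcal{I}$. This follows from $X\sim_{U} X'$ applied to the test set $Y\cup Z$, which is legitimate because $Y\cup Z\subseteq V\cup W=E-U$. Next I would apply $Y\sim_{V} Y'$ to the test set $X'\cup Z$, which lies in $U\cup W=E-V$; this gives $X'\cup Y\cup Z\in\mcal{I}$ if and only if $X'\cup Y'\cup Z\in\mcal{I}$. Chaining the two biconditionals yields $X\cup Y\cup Z\in\mcal{I}$ iff $X'\cup Y'\cup Z\in\mcal{I}$, and since $Z\subseteq W$ was arbitrary, $(X\cup Y)\sim_{(U\cup V)}(X'\cup Y')$.

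For the \emph{in particular} clause, I would simply specialise the equivalence just established to $Z=\emptyset$, which is always a subset of $W=E-(U\cup V)$, thereby obtaining $X\cup Y\in\mcal{I}$ iff $X'\cup Y'\in\mcal{I}$.

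There is no real obstacle; the only thing to be careful about is bookkeeping with the partition, namely verifying that the test sets $Y\cup Z$ and $X'\cup Z$ really do avoid $U$ and $V$ respectively, which is immediate from $(U,V,W)$ being a partition. The argument works for arbitrary set-systems and makes no use of closure under subsets.
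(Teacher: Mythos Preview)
Your proposal is correct and follows essentially the same route as the paper's own proof: fix $Z\subseteq W$, interpolate through $X'\cup Y\cup Z$ by first applying $X\sim_{U}X'$ with test set $Y\cup Z\subseteq E-U$ and then $Y\sim_{V}Y'$ with test set $X'\cup Z\subseteq E-V$. The only cosmetic difference is that the paper writes out one implication and appeals to symmetry for the converse, whereas you phrase each step directly as a biconditional and then explicitly specialise to $Z=\emptyset$ for the final clause.
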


\begin{proof}
Let $Z$ be an arbitrary subset of $E-(U\cup V)$, and assume that
$X\cup Y\cup Z$ is in \mcal{I}.
Because $Y\cup Z\subseteq E-U$ and $X\sim_{U}X'$,
it follows that $X'\cup Y\cup Z$ is in \mcal{I}.
Now $X'\cup Z\subseteq E-V$ and $Y\sim_{V}Y'$, so
$X'\cup Y'\cup Z$ is in \mcal{I}.
By an identical argument, we see that
if $X'\cup Y'\cup Z$ is in \mcal{I}, then so is $X\cup Y\cup Z$.
\end{proof}

In the previous result, if $(U,V)$ is a partition of $E$, then $X\cup Y$ will be equivalent
to $X'\cup Y'$ under $\sim_{U\cup V}$ if and only if both $X\cup Y$ and $X'\cup Y'$ are
in \mcal{I}, or neither is.

\begin{proposition}
\label{shadow}
Let $(E,\mcal{I})$ be a set-system, and let $(U,V)$ be a partition of $E$.
If $q$ is the number of equivalence classes under $\sim_{U}$,
then the number of equivalence classes under $\sim_{V}$ is at
most $2^{q}$.
\end{proposition}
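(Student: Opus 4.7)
The plan is to set up an injection from the equivalence classes of $\sim_{V}$ into the power set of the equivalence classes of $\sim_{U}$. Let $C_{1},\ldots,C_{q}$ denote the equivalence classes of $\sim_{U}$ on subsets of $U$. For each $Y\subseteq V$, I would define a ``signature''
\[
f(Y)=\{\, C_{i} : X\cup Y\in\mcal{I}\text{ for some (equivalently, every) }X\in C_{i}\,\}\subseteq\{C_{1},\ldots,C_{q}\}.
\]
The parenthetical ``some/every'' is justified by \Cref{icecap} applied to the partition $(U,V,\emptyset)$: if $X\sim_{U}X'$ then $X\cup Y\in\mcal{I}$ iff $X'\cup Y\in\mcal{I}$, so membership in $f(Y)$ does not depend on the chosen representative.

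The second step is to show that $Y\sim_{V} Y'$ if and only if $f(Y)=f(Y')$. One direction is immediate: if $Y\sim_{V}Y'$ then for every $X\subseteq U=E-V$, $X\cup Y\in\mcal{I}$ iff $X\cup Y'\in\mcal{I}$, so $f(Y)=f(Y')$. For the other direction, suppose $f(Y)=f(Y')$ and let $X\subseteq U$ be arbitrary. Then $X$ lies in some class $C_{i}$, and by definition $X\cup Y\in\mcal{I}$ iff $C_{i}\in f(Y)$ iff $C_{i}\in f(Y')$ iff $X\cup Y'\in\mcal{I}$. Hence $Y\sim_{V}Y'$.

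It follows that the map $Y\mapsto f(Y)$ induces an injection from $\sim_{V}$-classes into the power set of $\{C_{1},\ldots,C_{q}\}$, giving the bound $2^{q}$.

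No real obstacle is anticipated; the proof is essentially just the observation that the $\sim_{V}$-class of $Y$ is determined by the function ``which $\sim_{U}$-classes extend $Y$ to an independent set''. The only point that requires care is verifying the well-definedness of $f(Y)$ via \Cref{icecap}, which is why this proposition appears immediately before the statement.
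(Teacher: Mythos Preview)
Your proof is correct and essentially the same as the paper's: the paper encodes your signature $f(Y)$ as a binary string $b(Z)$ of length $q$ (with the $i$th bit recording whether $X_i\cup Z\in\mcal{I}$ for a fixed representative $X_i$) and shows that $b(Z)=b(Z')$ implies $Z\sim_V Z'$. The only minor differences are that the paper verifies well-definedness directly from the definition of $\sim_U$ rather than via \Cref{icecap}, and proves only the implication needed for the bound rather than the full biconditional.
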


\begin{proof}
Let the equivalence classes under $\sim_{U}$ be
$\mcal{E}_{1},\ldots, \mcal{E}_{q}$,
and let $X_{i}$ be a member of $\mcal{E}_{i}$ for each $i$.
Let $Z$ be any subset of $V$.
We define $b(Z)$ to be the binary string of length
$q$, where the $i$th character is $1$ if and only if
$X_{i}\cup Z$ is in \mcal{I}.
It is clear that this string is well-defined and does not depend on our choice of
the representatives $X_{i}$.
We complete the proof by showing that when $Z,Z'\subseteq V$
satisfy $b(Z)=b(Z')$, they also satisfy $Z\sim_{V} Z'$.
Assume this is not the case, and let $X\subseteq U$ be
such that exactly one of $X\cup Z$ and $X\cup Z'$ is in \mcal{I}.
Without loss of generality, we assume $X\cup Z\in \mcal{I}$ and
$X'\cup Z\notin\mathcal{I}$.
Assume that $X$ is a member of $\mcal{E}_{i}$.
Since $b(Z)=b(Z')$, either both of $X_{i}\cup Z$ and $X_{i}\cup Z'$ are in \mcal{I}, or neither is.
In the first case, $X_{i}\cup Z'\in\mcal{I}$ and $X\cup Z'\notin\mcal{I}$, so we
contradict $X_{i}\sim_{U} X$.
In the second case, $X_{i}\cup Z\notin\mcal{I}$ and $X\cup Z\in\mcal{I}$,
so we reach the same contradiction.
\end{proof}

A \emph{subcubic} tree is one in which every vertex has degree three or one.
A degree-one vertex is a \emph{leaf}.
Let $M=(E,\mcal{I})$ be a set-system.
A \emph{decomposition} of $M$ is a pair $(T,\varphi)$, where $T$ is a
subcubic tree, and $\varphi$ is a bijection from $E$ into the set of leaves of $T$.
Let $e$ be an edge joining vertices $u$ and $v$ in $T$.
Then $e$ partitions $E$ into sets $(U_{e}, V_{e})$ in the following
way: an element $x\in E$ belongs to $U_{e}$ if and only if
the path in $T$ from $\varphi(x)$ to $u$ does not contain $v$.
We say that the partition $(U_{e},V_{e})$ and the sets $U_{e}$ and
$V_{e}$ are \emph{displayed} by the edge $e$.
Define $\dw(M;T,\varphi)$ to be the maximum
number of equivalence classes in $\sim_{U}$, where the maximum is taken
over all subsets, $U$, displayed by an edge in $T$.
Define $\dw(M)$ to be the minimum value of
$\dw(M;T,\varphi)$, where the minimum is taken over all
decompositions $(T,\varphi)$ of $M$.
This minimum is the \emph{decomposition-width} of $M$.
The notion of decomposition-width specialises to matroids in the
obvious way.

\begin{definition}
\label{zydeco}
Let $M$ be a matroid.
Then $\dw(M)$ is equal to $\dw(E(M),\mcal{I}(M))$.
\end{definition}

It is an exercise to show that a class of matroids has bounded decomposition-width if and only if it has bounded decomposition-width, as defined by Kr\'{a}l \cite{Kra12} and
Strozecki \cite{Str10,Str11}.
Kr\'{a}l states the next result without proof.

\begin{proposition}
\label{tubing}
Let $x$ be an element of the matroid $M$.
Then $\dw(M\ba x)\leq \dw(M)$ and $\dw(M/x)\leq \dw(M)$.
\end{proposition}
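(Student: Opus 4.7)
The plan is to start with a decomposition $(T,\varphi)$ of $M$ realising $\dw(M;T,\varphi)=\dw(M)$, and to derive decompositions of $M\ba x$ and $M/x$ by deleting the leaf $\varphi(x)$ from $T$ and, when the neighbour of $\varphi(x)$ thereby acquires degree two, suppressing it. Call the resulting subcubic tree $T'$, and let $\varphi'$ be the restriction of $\varphi$, so that $(T',\varphi')$ is a decomposition of $E(M)-\{x\}$. (The small cases $|E(M)|\leq 3$ are degenerate and can be handled by inspection.)

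The second step is to relate sets displayed by $T'$ to sets displayed by $T$. A short case analysis on whether an edge $e'$ of $T'$ was already an edge of $T$ or was created by the suppression shows that for every such $e'$ there is an edge $e$ of $T$ for which the partition displayed by $e'$ in $T'$ is $(U_e-\{x\},\,V_e-\{x\})$. Consequently every set displayed by $T'$ has the form $U-\{x\}$ for some set $U$ displayed by $T$, and it suffices to prove that for each such $U$ the equivalence relation on subsets of $U-\{x\}$ coming from the minor has at most as many classes as $\sim_U$ does on $M$.

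For deletion, the argument is a direct refinement: any $Z\subseteq(E(M)-\{x\})-(U-\{x\})$ satisfies $Z\subseteq E(M)-U$, and whenever $X\subseteq U-\{x\}$ the set $X\cup Z$ avoids $x$, so $X\cup Z\in\mcal{I}(M)$ iff $X\cup Z\in\mcal{I}(M\ba x)$. Unwinding \Cref{fracas} shows that $\sim_U^M$, restricted to subsets of $U-\{x\}$, is therefore a refinement of $\sim_{U-\{x\}}^{M\ba x}$. Restriction cannot increase the number of classes, and neither can passing to a coarsening, so the bound is established.

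For contraction, the substantive case is $x\in U$ with $x$ not a loop (if $x$ is a loop then $M/x=M\ba x$ and the deletion argument applies). Using $Y\in\mcal{I}(M/x)\iff Y\cup\{x\}\in\mcal{I}(M)$ for $Y\subseteq E(M)-\{x\}$, one reads off that, for $X,X'\subseteq U-\{x\}$,
\[
X\sim_{U-\{x\}}^{M/x} X' \iff (X\cup\{x\})\sim_U^M (X'\cup\{x\}).
\]
Hence $X\mapsto X\cup\{x\}$ sends $\sim_{U-\{x\}}^{M/x}$-classes injectively into $\sim_U^M$-classes, giving the desired bound. The main subtlety is the bookkeeping in step two — confirming that tree suppression correctly tracks displayed partitions — after which both minor operations reduce to straightforward comparisons of the defining conditions.
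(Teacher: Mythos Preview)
Your approach is essentially the same as the paper's: start from an optimal decomposition of $M$, delete the leaf $\varphi(x)$ and suppress the resulting degree-two vertex, then compare equivalence classes across the two trees. The paper argues the contrapositive (distinct classes in $M'$ yield distinct classes in $M$), while you phrase it via refinements and an injection $X\mapsto X\cup\{x\}$; these are the same argument.

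There is one gap. For contraction you declare ``the substantive case is $x\in U$'' and give the injection $X\mapsto X\cup\{x\}$ only there. But not every set displayed by $T'$ arises as $U-\{x\}$ with $x\in U$: for each edge of $T'$ exactly one side of the displayed partition comes from a $U$ with $x\in U$, and the other side comes from a $U$ with $x\notin U$. You must also bound the number of $\sim^{M/x}_{U}$-classes when $x\notin U$. The paper does this explicitly: if $X,X'\subseteq U$ are inequivalent in $M/x$, witnessed by $Z\subseteq E(M)-(U\cup x)$, then $Z\cup\{x\}\subseteq E(M)-U$ witnesses that $X,X'$ are inequivalent in $M$ under $\sim_U$ (using that $Y\in\mcal{I}(M/x)$ iff $Y\cup\{x\}\in\mcal{I}(M)$ when $x$ is not a loop). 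Equivalently, in your language, $\sim_U^M$ refines $\sim_U^{M/x}$ when $x\notin U$. This is short, but it is not covered by either your deletion paragraph or your $x\in U$ paragraph, so you should state it.
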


\begin{proof}
Let $(T,\varphi)$ be a decomposition of $M$ and assume that whenever
$U$ is a displayed set, then $\sim_{U}$ has no more than $\dw(M)$
equivalence classes.
Let $T'$ be the tree obtained from $T$ by deleting $\varphi(x)$ and then
contracting an edge so that every vertex in $T'$ has degree one or three.
Let $U$ be any subset of $E(M)-x$ displayed by $T'$.
Then either $U$ or $U\cup x$ is displayed by $T$.
Let $M'$ be either $M\backslash x$ or $M/x$.
We will show that in $M'$, the number of equivalence classes under
$\sim_{U}$ is no greater than the number of classes under $\sim_{U}$
or $\sim_{U\cup x}$ in $M$.
Let $X$ and $X'$ be representatives of distinct classes under $\sim_{U}$ in $M'$.
We will be done if we can show that these representatives correspond to distinct
classes in $M$.
Without loss of generality, we can assume that $Z$ is a subset of $E(M)-(U\cup x)$
such that $X\cup Z$ is independent in $M'$, but $X'\cup Z$ is dependent.
If $M'=M\backslash x$, then $X\cup Z$ is independent in $M$ and $X'\cup Z$ is
dependent, and thus we are done.
So we assume that $M'=M/x$.
If $U$ is displayed by $T$, then we observe that $X\cup (Z\cup x)$ is independent
in $M$, while $X'\cup (Z\cup x)$ is dependent.
On the other hand, if $U\cup x$ is displayed, then $(X\cup x)\cup Z$ is independent
in $M$ and $(X'\cup x)\cup Z$ is dependent.
Thus $X$ and $X'$ belong to distinct equivalence classes in $M$, as claimed.
\end{proof}

\Cref{tubing} shows that the class of matroids with decomposition-width at most $k$ is
minor-closed.

Let $M$ be a matroid.
The \emph{branch-width} of $M$ (written $\bw(M)$) is defined
as follows.
If $(T,\varphi)$ is a decomposition of $M=(E(M),\mcal{I}(M))$, then
$\bw(M;T,\varphi)$ is the maximum value of
\[\lambda_{M}(U_{e})+1=r_{M}(U_{e})+r_{M}(V_{e})-r(M)+1,\]
where the maximum is taken over all partitions $(U_{e},V_{e})$
displayed by edges of $T$.
Now $\bw(M)$ is the minimum value of $\bw(M;T,\varphi)$,
where the minimum is taken over all decompositions of $M$.
We next show that for classes of matroids, bounded
decomposition-width implies bounded branch-width.

\begin{proposition}
\label{healer}
Let $M$ be a matroid, and let $U$ be a subset of $E(M)$.
There are at least $\lambda_{M}(U)+1$ equivalence classes under
the relation $\sim_{U}$.
\end{proposition}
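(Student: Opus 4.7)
The plan is to produce $\lambda+1$ subsets of $U$ that lie in distinct equivalence classes of $\sim_U$, by constructing a chain $X_0\subsetneq X_1\subsetneq\cdots\subsetneq X_\lambda$ of subsets of $U$ together with a reverse chain $Z_0\supsetneq Z_1\supsetneq\cdots\supsetneq Z_\lambda$ of subsets of $V=E(M)-U$ such that each $X_i\cup Z_i$ is a basis of $M$, while every $X_j\cup Z_i$ with $j>i$ is forced to be dependent by a pure cardinality count.

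Write $\lambda$ for $\lambda_M(U)$. I would begin by fixing a basis $B_V$ of $M|V$. Since the rank of $V$ in $M/U$ equals $r(M)-r_M(U)=r_M(V)-\lambda$, and $B_V$ still spans $V$ in $M/U$, the set $B_V$ contains a basis of $V$ in $M/U$ of size $r_M(V)-\lambda$. I would enumerate the elements of $B_V$ as $b_1,\ldots,b_{r_M(V)}$ so that $\{b_1,\ldots,b_{r_M(V)-\lambda}\}$ is such an $M/U$-basis, and set $Z_i=\{b_1,\ldots,b_{r_M(V)-i}\}$ for $0\leq i\leq\lambda$. Because $Z_i$ contains this $M/U$-basis of $V$, we have $r_M(U\cup Z_i)=r(M)$, so the rank of $U$ in $M/Z_i$ is $r(M)-|Z_i|=r_M(U)-\lambda+i$. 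I would then pick $X_i$ inductively to be a basis of $U$ in $M/Z_i$: $X_0$ is any such basis, and since $Z_{i+1}\subsetneq Z_i$ the set $X_i$ remains independent in $M/Z_{i+1}$ and can be extended by a single element to a basis $X_{i+1}$.

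By construction each $X_i\cup Z_i$ is independent in $M$ of cardinality $(r_M(U)-\lambda+i)+(r_M(V)-i)=r(M)$, and so is a basis of $M$. For any $0\leq i<j\leq\lambda$ the set $X_j\cup Z_i$ has cardinality $r(M)+(j-i)>r(M)$ and is therefore dependent, while $X_i\cup Z_i$ is independent; hence $Z_i$ witnesses $X_i\not\sim_U X_j$, and the $\lambda+1$ subsets $X_0,\ldots,X_\lambda$ lie in distinct equivalence classes. The only step that requires insight rather than bookkeeping is the preliminary refinement of $B_V$ via $M/U$, which cleanly isolates a ``free'' portion of $B_V$ from the $\lambda$ elements carrying the connectivity across $(U,V)$; once that decomposition is in hand, everything else is a cardinality argument.
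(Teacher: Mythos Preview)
Your proof is correct and follows essentially the same approach as the paper: both construct nested chains $X_0\subsetneq\cdots\subsetneq X_\lambda$ in $U$ and $Z_0\supsetneq\cdots\supsetneq Z_\lambda$ in $V$ with each $X_i\cup Z_i$ a basis of $M$, and then separate $X_i$ from $X_j$ (for $i<j$) by the cardinality overshoot of $X_j\cup Z_i$. The only difference is in execution---the paper fixes the $X$-chain first and finds the $y_i\in B_V$ to remove via fundamental-circuit exchange, whereas you fix the $Z$-chain first by using the contraction $M/U$ to isolate the removable part of $B_V$ and then grow the $X_i$ inside $M/Z_i$; both are standard and equally clean.
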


\begin{proof}
Define $V$ to be $E(M)-U$.
Let $\lambda$ stand for $\lambda_{M}(U)$, so that $\lambda=r(U)+r(V)-r(M)$.
We will prove that $\sim_{U}$ has at least $\lambda+1$ equivalence classes.
Let $B_{V}$ be a basis of $M|V$, and let $B$ be a basis of $M$
that contains $B_{V}$.
Then $B\cap U$ is independent in $M|U$, and
\begin{linenomath*}
\begin{multline*}
r(U)-|B\cap U|
=r(U)-(|B|-|B_{V}|)
=r(U)-(r(M)-r(V))\\
=r(U)-(r(U)-\lambda)
=\lambda.
\end{multline*}
\end{linenomath*}
Therefore we let $(B\cap U)\cup\{x_{1},\ldots, x_{\lambda}\}$ be a
basis of $M|U$, where $x_{1},\ldots, x_{\lambda}$ are distinct
elements of $U-B$.
Next we construct a sequence of distinct elements,
$y_{1},\ldots, y_{\lambda}$ from $B_{V}$ such that
$(B-\{y_{1},\ldots, y_{i}\})\cup\{x_{1},\ldots, x_{i}\}$ is a
basis of $M$ for each $i\in\{0,\ldots, \lambda\}$.
We do this recursively.
Let $C$ be the unique circuit contained in
\[
(B-\{y_{1},\ldots, y_{i}\})\cup\{x_{1},\ldots, x_{i}\}\cup x_{i+1}
\]
and note that $x_{i+1}$ is in $C$.
If $C$ contains no elements of $B_{V}$, then it is contained
in $(B\cap U)\cup\{x_{1},\ldots, x_{\lambda}\}$, which is impossible.
So we simply let $y_{i+1}$ be an arbitrary element in
$C\cap B_{V}$.

We complete the proof by showing that
\[(B\cap U)\cup\{x_{1},\ldots, x_{i}\}\quad\text{and}\quad
(B\cap U)\cup\{x_{1},\ldots, x_{j}\}\]
are inequivalent under $\sim_{U}$ whenever
$0\leq i<j\leq \lambda$.
Indeed, if $Z=B_{V}-\{y_{1},\ldots, y_{i}\}$, then
$(B\cap U)\cup\{x_{1},\ldots, x_{i}\}\cup Z$ is a basis of
$M$, and is properly contained in
$(B\cap U)\cup\{x_{1},\ldots, x_{j}\}\cup Z$, so the
last set is dependent, and we are done.
\end{proof}

\begin{corollary}
\label{whaler}
Let $M$ be a matroid.
Then $\dw(M)\geq \bw(M)$.
\end{corollary}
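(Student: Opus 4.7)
The plan is to observe that this is a direct consequence of \Cref{healer} combined with the definitions of $\bw(M)$ and $\dw(M)$, since both quantities are defined as minima (over the same set of decompositions) of maxima (over the same set of displayed partitions) of quantities that \Cref{healer} compares pointwise.

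Concretely, I would begin by taking a decomposition $(T,\varphi)$ of $M$ that achieves the minimum, so that $\dw(M;T,\varphi)=\dw(M)$. For every edge $e$ of $T$, the displayed set $U_e$ then has the property that $\sim_{U_{e}}$ has at most $\dw(M)$ equivalence classes. By \Cref{healer}, the number of equivalence classes under $\sim_{U_{e}}$ is at least $\lambda_{M}(U_{e})+1$. Chaining these two inequalities gives $\lambda_{M}(U_{e})+1\leq \dw(M)$ for every edge $e$ of $T$.

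Taking the maximum over all edges of $T$ on the left-hand side yields $\bw(M;T,\varphi)\leq \dw(M)$, and since $\bw(M)$ is by definition the minimum of $\bw(M;T,\varphi)$ over all decompositions, we conclude $\bw(M)\leq \bw(M;T,\varphi)\leq \dw(M)$, as required.

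There is no real obstacle here: the entire content of the corollary lives in \Cref{healer}, and the role of the proof is simply to line up the definitions. The only thing to be mildly careful about is to use the \emph{same} decomposition $(T,\varphi)$ on both sides of the comparison (the one witnessing $\dw(M)$), rather than trying to argue about branch-width and decomposition-width via separately chosen decompositions.
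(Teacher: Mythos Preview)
Your proposal is correct and follows essentially the same approach as the paper: both arguments choose a decomposition $(T,\varphi)$ witnessing $\dw(M)$ and then invoke \Cref{healer} on each displayed set to bound $\lambda_{M}(U_{e})+1$ by $\dw(M)$. The paper phrases this as a proof by contradiction (assuming some edge has $\lambda_{M}(U_{e})+1>\dw(M)$ and deriving a clash with \Cref{healer}), whereas you give the direct chain of inequalities, but the content is identical.
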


\begin{proof}
Assume that $\bw(M)>\dw(M)$.
Let $(T,\varphi)$ be a decomposition of $M$ such that
if $U$ is any set displayed by an edge of $T$, then
$\sim_{U}$ has at most $\dw(M)$ equivalence classes.
There is some edge $e$ of $T$ displaying a set
$U_{e}$ such that $\lambda_{M}(U_{e})+1>\dw(M)$,
for otherwise this decomposition of $M$ certifies that
$\bw(M)\leq \dw(M)$.
But $\sim_{U_{e}}$ has at least $\lambda_{M}(U_{e})+1$
equivalence classes by \Cref{healer}.
As $\lambda_{M}(U_{e})+1>\dw(M)$, this
contradicts our choice of $(T,\varphi)$.
\end{proof}

It is easy to see that the class of rank\dash $3$ sparse paving matroids
has unbounded decomposition width (see \cite[Lemma 4.1]{FMN-II}),
so the converse of \Cref{whaler} does not hold.
Kr\'{a}l proved the special case of \Cref{whaler} when $M$
is representable over a finite
field \cite[Theorem 2]{Kra12}.

Since we would like to consider natural classes of matroids
that have unbounded branch-width, we are motivated to make the next definition.

\begin{definition}
\label{juicer}
Let \mcal{M} be a class of matroids.
Then \mcal{M} is \emph{pigeonhole} if, for every positive
integer, $\lambda$, there is an integer $\rho(\lambda)$
such that $\bw(M)\leq \lambda$ implies
$\dw(M)\leq \rho(\lambda)$, for every $M\in\mcal{M}$.
\end{definition}

Thus a class of matroids is pigeonhole if every subclass with bounded
branch-width also has bounded decomposition-width.
The class of \mbb{F}\dash representable matroids is pigeonhole
when \mbb{F} is a finite field \cite[Theorem 5.1]{FMN-II}.
Note that the class of \mbb{F}\dash representable
matroids certainly has unbounded decomposition-width, since
it has unbounded branch-width.
Some natural classes possess a stronger property than the
pigeonhole property:

\begin{definition}
\label{fizzer}
Let \mcal{M} be a class of matroids.
Assume that for every positive integer $\lambda$, there is
a positive integer $\pi(\lambda)$, such that whenever $M\in\mcal{M}$
and $U\subseteq E(M)$ satisfies $\lambda_{M}(U)\leq \lambda$,
there are at most $\pi(\lambda)$ equivalence classes under $\sim_{U}$.
In this case we say that \mcal{M} is \emph{strongly pigeonhole}.
\end{definition}

\begin{proposition}
\label{fedora}
If a class of matroids is strongly pigeonhole, then it is pigeonhole.
\end{proposition}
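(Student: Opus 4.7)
The plan is essentially to unwrap the two definitions and note that a decomposition witnessing small branch-width automatically witnesses small decomposition-width, once strongly pigeonhole is invoked on every displayed set.

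Fix $\lambda$, suppose $\mathcal{M}$ is strongly pigeonhole with witness function $\pi$, and let $M\in\mathcal{M}$ with $\bw(M)\leq\lambda$. First I would choose a decomposition $(T,\varphi)$ of $M$ realising the branch-width, so that $\bw(M;T,\varphi)\leq\lambda$. By the definition of $\bw(M;T,\varphi)$, every set $U_e$ displayed by an edge $e$ of $T$ satisfies $\lambda_M(U_e)+1\leq\lambda$, hence $\lambda_M(U_e)\leq\lambda-1$.

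Next I would invoke the strongly pigeonhole hypothesis on each such displayed set: for every edge $e$, the relation $\sim_{U_e}$ has at most $\pi(\lambda-1)$ equivalence classes. Taking the maximum over all edges of $T$ gives $\dw(M;T,\varphi)\leq\pi(\lambda-1)$, and therefore $\dw(M)\leq\pi(\lambda-1)$. Setting $\rho(\lambda):=\pi(\lambda-1)$ (or, to avoid any edge case when $\lambda=1$, the monotone envelope $\max_{1\leq i\leq\lambda}\pi(i)$) yields the required bound and establishes that $\mathcal{M}$ is pigeonhole in the sense of \Cref{juicer}.

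There is no real obstacle here: the argument is a one-line bookkeeping step matching the parameter in the definition of branch-width with the parameter in the definition of strongly pigeonhole. The only thing to be mildly careful about is the off-by-one between $\lambda_M(U_e)$ and $\lambda_M(U_e)+1$ in the definition of $\bw$, which is what forces the shift from $\pi(\lambda)$ to $\pi(\lambda-1)$ in $\rho$.
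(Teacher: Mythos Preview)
Your proposal is correct and follows essentially the same route as the paper's proof: take a branch-decomposition witnessing $\bw(M)\leq\lambda$, apply the strongly pigeonhole bound to each displayed set, and conclude $\dw(M)\leq\pi(\lambda-1)$. The paper handles the monotonicity issue by simply assuming at the outset that $\pi$ is non-decreasing, which amounts to the same thing as your monotone-envelope remark.
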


\begin{proof}
Let \mcal{M} be a strongly pigeonhole class, and let $\pi$ be the function
from \Cref{fizzer}.
We may as well assume that $\pi$ is non-decreasing.
Let $\lambda$ be any positive integer, and
let $M$ be a matroid in \mcal{M} with branch-width at most $\lambda$.
Let $(T,\varphi)$ be a decomposition of $M$ such that
$\lambda_{M}(U)+1\leq \lambda$ for any set $U$ displayed by an
edge of $T$.
Then there are at most $\pi(\lambda-1)$\dash equivalence classes under
$\sim_{U}$.
Thus $(T,\varphi)$ demonstrates that $\dw(M)\leq \pi(\lambda-1)$.
So $\bw(M)\leq \lambda$ implies $\dw(M)\leq \pi(\lambda-1)$
for each $M\in \mcal{M}$, and the result follows.
\end{proof}

\begin{remark}
\label{effect}
To see that the strong pigeonhole property is strictly stronger
than the pigeonhole property, let \mcal{M} be the class of rank\dash two matroids.
Let $M$ be a member of \mcal{M} with $t$ parallel pairs (where $t\geq 2$).
Let $U$ be a set that contains exactly one element from each of these
pairs.
Then $\lambda_{M}(U)=2$.
However, it is easy to demonstrate that there are at least $t$ equivalence classes
under $\sim_{U}$, so this number is unbounded.
This demonstrates that \mcal{M} is not strongly pigeonhole.
However, if $M$ is in \mcal{M}, then there is a decomposition of $M$ such that
whenever $(U,V)$ is a displayed partition, at most one parallel class contains
elements of both $U$ and $V$.
Now we easily check that $\sim_{U}$ has at most five equivalence classes,
so $\dw(M)\leq 5$ for all $M\in \mcal{M}$, implying that \mcal{M} is
pigeonhole.
\end{remark}

\section{Monadic logic}
\label{logic}

In this section we construct the formal language \cmso\ (\emph{counting monadic second-order logic}).
We give ourselves a countably infinite supply of variables:
$X_{1},X_{2},X_{3},\ldots$\,.
We have a unary predicate: $\ind$, and
one binary predicate: $\subseteq$.
Furthermore, for each pair of integers $p$ and $q$ satisfying $0\leq p < q$, we have the unary predicate $|\cdot|_{p,q}$.
We use the standard connectives $\land$ and $\neg$, and
the quantifier $\exists$.
The \emph{atomic formulas} have the form
$\ind(X_{i})$, $X_{i}\subseteq X_{j}$, or $|X_{i}|_{p,q}$.
The atomic formulas $\ind(X_{i})$ and $|X_{i}|_{p,q}$ have $X_{i}$ as their \emph{free variable},
whereas the free variables of $X_{i}\subseteq X_{j}$ are $X_{i}$ and $X_{j}$.
A \emph{formula} is constructed by a finite application of the
following rules:
\begin{enumerate}[label=\textup{(\roman*)}]
\item an atomic formula is a formula,
\item if $\psi$ is a formula, then $\neg\psi$ is a formula
with the same free variables as $\psi$,
\item if $\psi$ is a formula, and $X_{i}$ is a free variable
in $\psi$, then $\exists X_{i} \psi$ is a formula;
its free variables are the free variables of $\psi$ except for
$X_{i}$, which is a \emph{bound variable} of
$\exists X_{i} \psi$,
\item if $\psi$ and $\phi$ are formulas, and no variable
is free in one of $\psi$ and $\phi$ and bound in the other,
then $\psi\land\phi$ is a formula, and its free variables are exactly
those that are free in either $\psi$ or $\phi$.
(We can rename bound variables, so this restriction does
not significantly constrain us.)
\end{enumerate}

Then \cmso\ is the collection of all formulas.
A formula is a \emph{sentence} if it has no free variables,
and is \emph{quantifier-free} if it has no bound variables.

\begin{definition}
\label{patent}
\emph{Monadic second-order logic}, denoted by \mso, is the collection of formulas that can be constructed without using any predicate of the form $|\cdot|_{p,q}$.
\end{definition}

Let $(E,\mcal{I})$ be a set-system.
Let $\psi$ be a formula in \cmso\ and
let $F$ be the set of free variables in $\psi$.
An \emph{interpretation} of $\psi$ in $(E,\mcal{I})$ is a
function $\theta$ from $F$ into the power set of $E$.
We think of $\theta$ as a set of ordered pairs with the first element
being a variable in $F$ and the second being a subset of $E$.
We define what it means for the pair $(E,\mcal{I})$ to \emph{satisfy}
$\psi$ under the interpretation $\theta$.
If $\psi$ is $\ind(X_{i})$, then $(E,\mcal{I})$ satisfies $\psi$
if $\theta(X_{i})$ is in \mcal{I}.
If $\psi$ is $|X_{i}|_{p,q}$, then $(E,\mcal{I})$ satisfies $\psi$ if $|\theta(X_{i})|$ is equivalent to $p$ modulo $q$.
Similarly, $X_{i}\subseteq X_{j}$ is satisfied if
$\theta(X_{i})\subseteq \theta(X_{j})$.
Now we extend this definition to formulas that are not
atomic.

If $\psi=\neg\phi$, then $(E,\mcal{I})$ satisfies $\psi$
if and only if it does not satisfy $\phi$ under $\theta$.
If $\psi=\phi_{1}\land\phi_{2}$, then $\psi$ is satisfied
if $(E,\mcal{I})$ satisfies both $\phi_{1}$ and $\phi_{2}$
under the interpretations consisting of $\theta$ restricted
to the free variables of $\phi_{1}$ and $\phi_{2}$.
Finally, if $\psi=\exists X_{i}\phi$, then $(E,\mcal{I})$ satisfies $\psi$
if and only if there is a subset $Y_{i}\subseteq E$ such that
$(E,\mcal{I})$ satisfies $\phi$ under the interpretation
$\theta\cup\{(X_{i},Y_{i})\}$.

We use $\psi\lor\phi$ as shorthand for $\neg((\neg\psi)\land(\neg\phi))$,
and $\psi\to\phi$ as shorthand for $(\neg\psi)\lor \phi$.
The formula $\psi \leftrightarrow \phi$ is shorthand for
$(\psi\to \phi)\land (\phi\to\psi)$.
If $X_{i}$ is a free variable in $\psi$, then $\forall X_{i} \psi$ stands for
$\neg \exists X_{i} \neg\psi$. 
The predicate $\emp(X_{i})$ stands for
\[
\forall X (X\subseteq X_{i} \to X_{i}\subseteq X)
\]
and is satisfied exactly when $X_{i}$ is interpreted as the empty set.
(Here $X$ is a variable not equal to $X_{i}$.)
Similarly, $\sing(X_{i})$ stands for
\[
\neg \emp(X_{i}) \land \forall X (X\subseteq X_{i} \to (\emp(X) \lor X_{i}\subseteq X))
\]
and is satisfied exactly when $X_{i}$ is interpreted as a singleton set.

As is demonstrated in \cite{MNW18}, there are \mso\ sentences
that are satisfied by $(E,\mcal{I})$ if and only if
\mcal{I} is the family of independent sets of a matroid.
Furthermore, there are \mso\ sentences that characterise
any minor-closed class of matroids having only finitely
many excluded minors
(see \cite{MNW18} or \cite[Lemma 5.1]{Hli03b}).
On the other hand, the main theorem of \cite{MNW18} shows that no
\mso\ sentence characterises the class of representable matroids,
or the class of \mbb{K}\dash representable matroids when \mbb{K}
is an infinite field.

\section{Automatic classes}
\label{automatic}

Our second principal definition involves families of set-systems 
that can be encoded by a tree, where that tree can be processed
by a machine that simulates an independence oracle.
We start by introducing tree automata.
We use \cite{Eng15} as a general reference.

\begin{definition}
\label{sitcom}
Let $T$ be a tree with a distinguished \emph{root} vertex, $t$.
Assume that every vertex of $T$ other than $t$ has degree one or three,
and that if $T$ has more than one vertex, then $t$ has degree two.
The \emph{leaves} of $T$ are the degree-one vertices.
In the case that $t$ is the only vertex, we also consider $t$ to be a leaf.
Let $L(T)$ be the set of leaves of $T$.
If $T$ has more than one vertex, and $v$ is a non-leaf, then $v$ is adjacent
with two vertices that are not in the path from $v$ to $t$.
These two vertices are the \emph{children} of $v$.
We distinguish the \emph{left} child and the \emph{right} child of $v$.
Now let $\Sigma$ be a finite \emph{alphabet} of \emph{characters}.
Let $\sigma$ be a function from $V(T)$ to $\Sigma$.
Under these circumstances we say that $(T,\sigma)$ is a \emph{$\Sigma$\dash tree}.
\end{definition}

\begin{definition}
\label{nuance}
A \emph{tree automaton} is a tuple  $(\Sigma, Q, F, \delta_{0},\delta_{2})$,
where $\Sigma$ is a finite alphabet, and $Q$ is a finite set of \emph{states}.
The set of \emph{accepting states} is a subset $F\subseteq Q$.
We say $\delta_{0}$ and $\delta_{2}$ are \emph{transition rules}:
$\delta_{0}$ is a partial function from $\Sigma$ to $2^{Q}$ and
$\delta_{2}$ is a partial function from $\Sigma \times Q\times Q$ to $2^{Q}$.
\end{definition}

We think of the automaton as processing the vertices in a $\Sigma$\dash tree,
from leaves to root, applying a set of states to each vertex.
The set of states applied to a leaf, $v$, is given by the image of $\delta_{0}$,
applied to the $\Sigma$\dash label of $v$.
For a non-leaf vertex, $v$, we apply $\delta_{2}$ to the tuple consisting of
the $\Sigma$\dash label of $v$, a state applied to the left child, and a state applied
to right child.
We take the union of all such outputs, as we range over all states applied to
the children of $v$, and this union is the set we apply to $v$.

More formally, let $A=(\Sigma, Q, F, \delta_{0},\delta_{2})$ be an automaton.
Let $(T,\sigma)$ be a $\Sigma$\dash tree with root $t$.
We let $r\colon V(T)\to 2^{Q}$ be the function recursively defined
as follows:
\begin{enumerate}[label=\textup{(\roman*)}]
\item if $v$ is a leaf of $T$, then $r(v)$ is $\delta_{0}(\sigma(v))$
if this is defined, and is otherwise the empty set.
\item if $v$ has left child $v_{L}$ and right child $v_{R}$, then
\[
r(v)=\bigcup_{(q_{L},q_{R})\in r(v_{L})\times r(v_{R})} \delta_{2}(\sigma(v),q_{L},q_{R}),
\]
as long as the images in this union are all defined: if they are
not then we set $r(v)$ to be the empty set.
\end{enumerate}

We say that $r$ is the \emph{run} of the automaton $A$ on $(T,\sigma)$.
Note that we define a union taken over an empty collection
to be the empty set.
Thus if a child of $v$ has been assigned an empty set of states, then
$v$ too will be assigned an empty set of states.
We say that $A$ \emph{accepts} $(T,\sigma)$ if $r(t)$ contains
an accepting state.

The automaton, $A=(\Sigma, Q, F, \delta_{0},\delta_{2})$, is \emph{deterministic} if every set in the images of $\delta_{0}$ and $\delta_{2}$ is a singleton.
The next result shows that non-determinism in fact gives us no extra computing power.
The idea here dates to Rabin and Scott \cite{RS59}
(see \cite[Theorem 12.3.1]{DF13}).

\begin{lemma}
\label{folder}
Let $A'=(\Sigma, Q, F', \delta_{0}',\delta_{2}')$ be a tree automaton.
There exists a deterministic tree automaton,
$A=(\Sigma, 2^{Q},F,\delta_{0},\delta_{2})$,
such that $A'$ and $A$ accept exactly the same $\Sigma$\dash trees.
\end{lemma}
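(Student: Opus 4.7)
The plan is to carry out the standard subset (powerset) construction adapted to tree automata. I take the state set of $A$ to be $2^{Q}$, so each state of $A$ is already a subset of $Q$, and I design the transitions of $A$ so that the run of $A$ assigns to every vertex $v$ the set $r'(v)\subseteq Q$ that the run $r'$ of $A'$ assigns to $v$. Concretely, for each $c\in\Sigma$ I set
\[
\delta_{0}(c)=\bigl\{\,\delta_{0}'(c)\,\bigr\},
\]
with the convention that $\delta_{0}'(c)$ is replaced by $\emptyset$ when $\delta_{0}'(c)$ is undefined. For each $c\in\Sigma$ and each pair $S_{L},S_{R}\in 2^{Q}$ I set
\[
\delta_{2}(c,S_{L},S_{R})=\Bigl\{\,\bigcup_{(q_{L},q_{R})\in S_{L}\times S_{R}}\delta_{2}'(c,q_{L},q_{R})\,\Bigr\},
\]
again treating any undefined value of $\delta_{2}'$ as $\emptyset$. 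The accepting set is $F=\{\,S\subseteq Q : S\cap F'\neq\emptyset\,\}$. Each image is a singleton by construction, so $A$ is deterministic.

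Next I would verify by structural induction on $(T,\sigma)$ that if $r'$ is the run of $A'$ and $r$ is the run of $A$ on the same $\Sigma$-tree, then $r(v)=\{r'(v)\}$ for every vertex $v$ of $T$. The leaf case is immediate from the definition of $\delta_{0}$. For an internal vertex $v$ with children $v_{L},v_{R}$, the inductive hypothesis gives $r(v_{L})=\{r'(v_{L})\}$ and $r(v_{R})=\{r'(v_{R})\}$; substituting these singletons into the recursive definition of $r(v)$ produces
\[
r(v)=\delta_{2}(\sigma(v),r'(v_{L}),r'(v_{R}))=\Bigl\{\,\bigcup_{(q_{L},q_{R})\in r'(v_{L})\times r'(v_{R})}\delta_{2}'(\sigma(v),q_{L},q_{R})\,\Bigr\},
\]
and the inner set is exactly $r'(v)$ by definition of the run of $A'$.

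Applying this at the root $t$ yields $r(t)=\{r'(t)\}$, so $A$ accepts $(T,\sigma)$ iff $r'(t)\in F$, i.e.\ iff $r'(t)\cap F'\neq\emptyset$, iff $A'$ accepts $(T,\sigma)$.

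The only genuinely delicate points are bookkeeping ones rather than conceptual obstacles: handling the partiality of $\delta_{0}'$ and $\delta_{2}'$ consistently with the paper's convention (a union over an undefined image is taken as empty, which already matches the recursive definition of the run), and ensuring that each image set of $\delta_{0},\delta_{2}$ is literally a singleton — not merely a set of size at most one — so that $A$ is deterministic in the sense of \Cref{nuance}. Both are handled by the explicit formulas above.
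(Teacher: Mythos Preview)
Your proposal is correct and follows essentially the same powerset construction as the paper, with the same definitions of $F$, $\delta_{0}$, $\delta_{2}$ and the same inductive verification that $r(v)=\{r'(v)\}$. Your explicit convention of treating undefined values of $\delta_{0}'$ and $\delta_{2}'$ as $\emptyset$ is a minor bookkeeping variant that, if anything, makes the claim $r(v)=\{r'(v)\}$ hold uniformly even at vertices where the paper's partial transitions are undefined.
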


\begin{proof}
Note that the states in $A$ are sets of states in $A'$.
Let $F$ be $\{X\in 2^{Q}\colon X\cap F'\ne \emptyset\}$.
Thus a state is accepting in $A$ if and only if it contains an
accepting state of $A'$.
For each $\sigma\in \Sigma$, we define $\delta_{0}(\sigma)$ to be
$\{\delta_{0}'(\sigma)\}$ when $\delta_{0}'(\sigma)$ is defined.
For any $\sigma\in \Sigma$, and any $X,Y\in 2^{Q}$, we set
\[
\delta_{2}(\sigma,X,Y)=\left\{ \bigcup_{(q_{L},q_{R})\in X\times Y}
\delta_{2}'(\sigma,q_{L},q_{R}) \right\}
\]
as long as every image in the union is defined.
Thus every image of $\delta_{0}$ or $\delta_{2}$ is a singleton
set, so $A$ is deterministic, as desired.

Let $(T,\sigma)$ be a $\Sigma$\dash tree with root $t$.
Let $r'$ and $r$ be the runs of $A'$ and $A$ on $(T,\sigma)$.
We easily establish that $r(v)=\{r'(v)\}$, for each vertex $v$.
If $A'$ accepts $(T,\sigma)$, then $r'(t)$ contains a state in $F'$.
Therefore $r'(t)$ is a member of $F$, so $r(t)=\{r'(t)\}$ contains a member of $F$.
Hence $A$ also accepts $(T,\sigma)$.
For the converse, assume that $A$ accepts $(T,\sigma)$.
Then $r(t)=\{r'(t)\}$ contains an accepting state.
This means that $r'(t)$ is not disjoint from $F'$, so
$A'$ also accepts $(T,\sigma)$, and we are done.
\end{proof}

We would like to use tree automata to decide if a
formula in \cmso\ is satisfied by a set-system, $(E,\mcal{I})$.
This formula may have free variables, and in this case
deciding whether the formula is satisfied only makes
sense if we assign subsets of $E$ to the free variables.
So our next job is to formalise a way to encode this
assignment into the leaf labels of a tree.

Let $I$ be a finite set of positive integers.
We use $\{0,1\}^{I}$ to denote the set of functions from $I$ into $\{0,1\}$.
If $I$ is empty, then $\{0,1\}^{I}$ is the empty set.
Let $\Sigma$ be a finite alphabet, and let
$(T,\sigma)$ be a $\Sigma$\dash tree.
Let $\varphi$ be a bijection from the finite set $E$ into $L(T)$.
Let $\mcal{S}=\{Y_{i}\}_{i\in I}$ be a family of subsets of $E$.
Now we define $\enc(T,\sigma,\varphi,\mcal{S})$ to be a
$(\Sigma \cup \Sigma\times \{0,1\}^{I})$\dash tree with $T$ as
its underlying tree.
If $I$ is empty, then we simply set $\enc(T,\sigma,\varphi,\mcal{S})$ to 
be $(T,\sigma)$.
Now we assume $I$ is non-empty.
If $v$ is a non-leaf vertex of $T$, then it receives the label
$\sigma(v)$ in $\enc(T,\sigma,\varphi,\mcal{S})$.
However, if $v$ is a leaf, then it receives a label
$(\sigma(v), s)$, where $s$ is the function from $I$ to $\{0,1\}$
taking $i$ to $1$ if and only if $\varphi^{-1}(v)$ is in $Y_{i}$.
We think of the label on the leaf $v$ as containing a character
from the alphabet $\Sigma$, as well as a binary string
where each bit of the string encodes whether or not the
corresponding element $\varphi^{-1}(v)\in E$ is in a set $Y_{i}$.

We say that a tree automaton $A$ is \emph{$I$\dash ary} if $I$ is a
finite set of positive integers, the alphabet of $A$
is $\Sigma\cup \Sigma\times \{0,1\}^{I}$, and every image of
$\delta_{0}$ is in $\Sigma\times \{0,1\}^{I}$, for some
finite set $\Sigma$.
Under these circumstances, we blur the terminology by saying
that $\Sigma$ itself is the alphabet of the automaton.

\begin{definition}
\label{option}
Let $\Sigma$ be a finite set, and let
$A$ be an $\{i\}$\dash ary tree automaton with alphabet $\Sigma$.
Let $(T,\sigma)$ be a $\Sigma$\dash tree, and let
$\varphi$ be a bijection from the finite set $E$ into $L(T)$.
We define the set-system $M(A,T,\sigma,\varphi)$
as follows:
\[
M(A,T,\sigma,\varphi)
=
(E,\{Y_{i}\subseteq E\colon A\ \text{accepts}\
\enc(T,\sigma,\varphi,\{Y_{i}\})\}).
\]
\end{definition}

So the ground set of $M(A,T,\sigma,\varphi)$ is in bijection with the
leaves of $T$ and the independent sets are exactly the subsets that are accepted by $A$,
where a subset is encoded by applying $0$-$1$ labels to the leaves.

Now we are ready to give our second main definition.

\begin{definition}
\label{dancer}
Let \mcal{M} be a class of set-systems.
Assume that $A$ is an $\{i\}$\dash ary tree automaton
with alphabet $\Sigma$.
Assume also that for any $M=(E,\mcal{I})$ in \mcal{M}, there is a
$\Sigma$\dash tree $(T_{M},\sigma_{M})$, and a bijection
$\varphi_{M}\colon E\to L(T_{M})$ having the property that
$M=M(A,T_{M},\sigma_{M},\varphi_{M})$.
In this case we say that \mcal{M} is \emph{automatic}.
\end{definition}

Note that any subclass of an automatic class is also automatic.
We say that $(T_{M},\sigma_{M})$ from \Cref{dancer} is a \emph{parse tree}
for $M$ (relative to the automaton $A$).

\begin{definition}
\label{yodler}
Let \mcal{M} be a class of matroids.
We say that \mcal{M} is \emph{automatic} if the class of
set-systems $\{(E(M),\mcal{I}(M))\colon M\in\mcal{M}\}$ is
automatic.
\end{definition}

Thus a class of matroids is automatic if there is an automaton
that acts as follows:
for each matroid $M$ in the class, there is a parse tree
$(T_{M},\sigma_{M})$, and a bijection $\varphi_{M}$ from the ground set
of $M$ to the leaves, such that when the leaf labels encode the set
$Y_{i}\subseteq E(M)$, the automaton accepts if and only if $Y_{i}$ is independent.
In other words, there is an automaton that will simulate an
independence oracle on an appropriately chosen parse tree
for any matroid in the class.

The next \namecref{family} says that if there is an automaton that simulates
an independence oracle, then there is an automaton that will test any
\cmso\ formula.
The ideas in the proof appear to have originated with Kleene \cite{Kle56}.

\begin{lemma}
\label{family}
Let $A'$ be an $\{i\}$\dash ary tree automaton with alphabet $\Sigma$.
Let $\psi$ be a formula in \cmso\ with free variables
$\{X_{j}\}_{j\in I}$.
There is an $I$\dash ary tree automaton $A$ with alphabet $\Sigma$,
such that for every $\Sigma$\dash tree $(T,\sigma)$,
every bijection, $\varphi$, from a finite set $E$ into $L(T)$,
and every family $\mcal{S}=\{Y_{j}\}_{j\in I}$ of subsets of $E$,
the automaton $A$ accepts $\enc(T,\sigma,\varphi,\mcal{S})$
if and only if the set-system $M(A',T,\sigma,\varphi)$ satisfies $\psi$ under
the interpretation taking $X_{j}$ to $Y_{j}$ for each $j\in I$.
\end{lemma}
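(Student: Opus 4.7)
The plan is to proceed by structural induction on $\psi$. At each stage we construct, from tree automata for the subformulas of $\psi$, a tree automaton for $\psi$ itself. The key invariant to preserve is that the subsets assigned to the free variables by $\mcal{S}$ are faithfully transmitted through the leaf labels, and that the set-system $M(A',T,\sigma,\varphi)$ agrees with what the automaton sees whenever the formula involves the predicate $\ind$.

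For the three atomic base cases we build small automata directly. For $\psi = \ind(X_j)$, the free variable set is $I = \{j\}$, and we essentially use $A'$ itself: on a leaf label $(\tau, s) \in \Sigma \times \{0,1\}^{\{j\}}$ we apply $A'$'s leaf transition to the encoding determined by the bit $s(j)$, so the automaton runs $A'$ on exactly the tree that encodes $Y_j$. For $\psi = X_i \subseteq X_j$, a two-state deterministic automaton suffices: at each leaf verify the local implication $s(i) = 1 \Rightarrow s(j) = 1$, and propagate this \emph{no violation so far} status up the tree via a conjunctive internal transition. For $\psi = |X_i|_{p,q}$, a $q$-state deterministic automaton does the job, with the state at a node recording the number of elements of $Y_i$ at leaves of its subtree, taken modulo $q$; the sole accepting state is $p$.

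The inductive steps use the familiar Boolean and projection closure operations for tree automata. For $\psi = \phi_1 \land \phi_2$, assume automata $A_1, A_2$ have been built for $\phi_1$ and $\phi_2$, with free-variable index sets $I_1, I_2 \subseteq I$. The product automaton---states are pairs of states of $A_1$ and $A_2$, transitions apply coordinatewise after projecting each leaf label from $\{0,1\}^I$ onto $\{0,1\}^{I_1}$ and $\{0,1\}^{I_2}$, and accepting states are pairs of accepting states---handles conjunction. For $\psi = \exists X_k\, \phi$, let $A_\phi$ be the automaton obtained by induction for $\phi$ over index set $I \cup \{k\}$, and build a nondeterministic automaton whose leaf transition on input $(\tau, s) \in \Sigma \times \{0,1\}^I$ outputs the union over $b \in \{0,1\}$ of $\delta_0^{A_\phi}(\tau, s_b)$, where $s_b$ extends $s$ by sending $k$ to $b$. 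This implements \emph{guess the extra bit}, and the resulting nondeterminism is the main reason we proved \Cref{folder} earlier.

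The subtle step, and the main obstacle, is negation. Given an automaton $A_\phi$ for $\phi$, we first apply \Cref{folder} to replace it with an equivalent deterministic automaton. Even after determinization the transition functions $\delta_0$ and $\delta_2$ are still partial, and any tree on which some transition is undefined is automatically rejected; simply complementing the set of accepting states would then incorrectly accept such trees. The remedy is to totalise the automaton first: adjoin a distinguished sink state, redefine every undefined image of $\delta_0$ or $\delta_2$ to be $\{\text{sink}\}$, and extend $\delta_2$ so that any triple involving a sink child maps to $\{\text{sink}\}$. Once the transitions are total and deterministic, taking the new accepting set to be the complement of the old one (with the sink excluded) yields an automaton for $\neg\phi$. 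A routine induction, matching each inductive construction with the corresponding semantic clause in the definition of satisfaction, then confirms that at every stage the constructed automaton accepts $\enc(T,\sigma,\varphi,\mcal{S})$ precisely when the set-system $M(A',T,\sigma,\varphi)$ satisfies the corresponding formula under the interpretation $X_j \mapsto Y_j$.
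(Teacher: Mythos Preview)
Your proof is correct and follows essentially the same structural-induction strategy as the paper: identical constructions for the three atomic predicates, a product automaton for conjunction, determinize-then-complement for negation, and nondeterministic bit-guessing for existential quantification. The one place you go beyond the paper is the negation step, where you explicitly totalise the determinised automaton with a sink state before complementing the accepting set; the paper simply says ``produce $A$ by modifying $A_{\phi}$ so that a state is accepting in $A$ exactly when it is not accepting in $A_{\phi}$'' without addressing the possibility that $\delta_{0}$ or $\delta_{2}$ is undefined on some input of the correct shape. Since the determinisation in \Cref{folder} preserves partiality (it only defines $\delta_{0}(\sigma)$ ``when $\delta_{0}'(\sigma)$ is defined''), a tree on which the run of $A_{\phi}$ goes empty would still be rejected after naive complementation, so your totalisation step is a genuine (if standard) fix rather than a stylistic flourish.
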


When we say that $A$ \emph{decides} $\psi$, we mean that
$A$ accepts $\enc(T,\sigma,\varphi,\mcal{S})$
if and only if $M(A',T,\sigma,\varphi)$ satisfies $\psi$ under
the interpretation taking $X_{j}$ to $Y_{j}$, for any
$T$, $\sigma$, and $\varphi$.

\begin{remark}
\label{aether}
If \mcal{M} is an automatic class,
then by definition, for each $M\in \mcal{M}$, we can choose
$T$, $\sigma$, and $\varphi$ so that
$M(A',T,\sigma,\varphi)$ is $M$.
Therefore \Cref{family} will provide us with a way to test whether
$M$ satisfies $\psi$: we simply run $A$ on
the appropriately labelled tree.
\end{remark}

\begin{proof}[Proof of \textup{\Cref{family}}.]
We prove the \namecref{family} by induction on the number of steps used
to construct the formula $\psi$.
Start by assuming that $\psi$ is atomic.
Assume that $\psi$ is $\mathrm{Ind}(X_{j})$.
Then the result follows from the definitions by setting $A$ to be $A'$.

Next we assume that $\psi$ is the atomic formula $X_{j}\subseteq X_{k}$.
We let the state space of $A$ be $\{\checkmark,\times\}$, and let
$\checkmark$ be the only accepting state.
Define $\delta_{0}$ so that for any $\alpha\in\Sigma$ and
any function $s\in\{0,1\}^{\{j,k\}}$, the image
$\delta_{0}(\alpha,s)$ is $\{\times\}$ if $(s(j),s(k))=(1,0)$, and
otherwise $\delta_{0}(\alpha,s)$ is $\{\checkmark\}$.
We define $\delta_{2}$ so that for any $\alpha\in\Sigma$,
\[
\delta_{2}(\alpha,\times,\times)=
\delta_{2}(\alpha,\times,\checkmark)=
\delta_{2}(\alpha,\checkmark,\times)=
\{\times\}
\]
and $\delta_{2}(\alpha,\checkmark,\checkmark)=\{\checkmark\}$.
Note that as $A$ processes the tree, it assigns $\times$ to a leaf
if and only if the corresponding element of $E$ is in $Y_{j}$ but not
$Y_{k}$.
If any leaf is assigned $\times$, then this state is propagated towards the
root.
Thus $A$ decides the formula $X_{j}\subseteq X_{k}$, as desired.

Next we will assume that $\psi$ is the atomic formula $|X_{j}|_{p,q}$.
We set the state space of $A$ to be $\{0,1,\ldots, q-1\}$, and we
let $p$ be the only accepting state.
For any $\alpha\in \Sigma$ and any $s\in\{0,1\}^{j}$, we set
$\delta_{0}(\alpha, s)$ to be $s(j)$.
Now for any $\alpha\in \Sigma$ and any $x,y\in\{0,1,\ldots,q-1\}$, we set
$\delta_{2}(\alpha,x,y)$ to be the residue of $x+y$ modulo $q$.
It is clear that $A$ decides $|X_{j}|_{p,q}$.

We may now assume that $\psi$
is not atomic.
Assume that $\psi$ is a negation, $\neg\phi$.
Note that the free variables of $\phi$ are $\{X_{j}\}_{j\in I}$.
By induction, there is an automaton, $A_{\phi}$, that accepts
$\enc(T,\sigma,\varphi,\{Y_{j}\}_{j\in I})$
if and only if
$M(A',T,\sigma,\varphi)$ satisfies $\phi$ under the interpretation
taking each $X_{j}$ to $Y_{j}$.
By \Cref{folder}, we can assume that $A_{\phi}$
is deterministic.
Now we produce $A$ by modifying $A_{\phi}$ so that a
state is accepting in $A$ exactly when it is not accepting in $A_{\phi}$.
Then $A$ decides $\neg\phi$.

Next we assume that $\psi$ is a conjunction,
$\phi_{1}\land \phi_{2}$.
For $z=1,2$, let $I_{z}$ be the set of free variables in $\phi_{z}$.
Thus $I=I_{1}\cup I_{2}$.
Inductively, there are automata $A_{1}$ and $A_{2}$
that decide $\phi_{1}$ and $\phi_{2}$.
For $z=1,2$, assume that $A_{i}$ is the automaton
\[(\Sigma\cup \Sigma\times \{0,1\}^{I_{z}}, Q^{z}, F^{z}, \delta_{0}^{z},\delta_{2}^{z}).\]
The idea of this proof is quite simple: we let
$A$ run $A_{1}$ and $A_{2}$ in parallel, and accept if and only
if both $A_{1}$ and $A_{2}$ accept.
To that end, we set $Q$ to be $Q^{1}\times Q^{2}$, and set
$F$ to be $F^{1}\times F^{2}$.
If $s$ is a function in $\{0,1\}^{I}$, then $s\!\restriction_{I_{z}}$
is the restriction of $s$ to $I_{z}$.
Now we define $\delta_{0}$ so that it takes
$(\alpha,s)$ to
\[
\delta_{0}^{1}(\alpha,s\!\restriction_{I_{1}})
\times \delta_{0}^{2}(\alpha,s\!\restriction_{I_{2}})
\]
for any $\alpha\in \Sigma$ and any $s\in\{0,1\}^{I}$.
We similarly define $\delta_{2}$ so that
$\delta_{2}(\alpha,(q_{L}^{1},q_{L}^{2}),(q_{R}^{1},q_{R}^{2}))$ is
\[
\delta_{2}^{1}(\alpha,q_{L}^{1},q_{R}^{1})\times
\delta_{2}^{2}(\alpha,q_{L}^{2},q_{R}^{2}).
\]
It is easy to see that $A$ acts as we desire, and therefore
decides $\psi$.

Finally, we must assume that $\psi$ is $\exists X_{j}\phi$,
where the free variables of $\phi$ are $\{X_{k}\}_{k\in I\cup \{j\}}$
and $j$ is not in $I$.
By induction, we can assume that the automaton
\[
A_{\phi}=(\Sigma\cup\Sigma\times \{0,1\}^{I\cup j},Q^{\phi},
F^{\phi},\delta_{0}^{\phi},\delta_{2}^{\phi})
\]
decides $\phi$.
For each $s\in\{0,1\}^{I}$, we set $s^{0}$ to be
the function in $\{0,1\}^{I\cup j}$ such that
$s^{0}\restriction_{I}=s$, and $s^{0}(j)=0$.
We similarly define $s^{1}\in\{0,1\}^{I\cup j}$ so that
$s^{1}\restriction_{I}=s$ and $s^{1}(j)=1$.
Now for each $\alpha\in\Sigma$ we set
\[
\delta_{0}(\alpha,s)=\delta_{0}^{\phi}(\alpha,s^{0})\cup
\delta_{0}^{\phi}(\alpha,s^{1}).
\]
Thus $\delta_{0}$ sends $(\alpha,s)$ to the set of states
that could be applied by $A_{\phi}$ to a leaf labelled
by $(\alpha,s')$, where $s'$ extends the domain of $s$ to include $j$.
We define $\delta_{2}(\alpha,q_{L},q_{R})$
to be $\delta_{2}^{\phi}(\alpha,q_{L},q_{R})$ when $\alpha$ is in $\Sigma$.
We define the state space and the accepting states of
$A$ to be exactly those of $A_{\phi}$.
We must now show that $A$ decides $\exists X_{j}\phi$.
We let $(T,\sigma)$ be an arbitrary $\Sigma$\dash tree, and
we let $\varphi$ be a bijection from the finite set $E$ into $L(T)$.

Assume that $M(A',T,\sigma,\varphi)$ satisfies $\exists X_{j}\phi$
under the interpretation that takes $X_{k}$ to $Y_{k}\subseteq E$
for each $k\in I$.
Then there is a subset $Y_{j}\subseteq E$ such that
$M(A',T,\sigma,\varphi)$ satisfies $\phi$ under the interpretation
that takes $X_{k}$ to $Y_{k}$ for all $k\in I\cup j$.
Let $\mcal{S}_{\phi}$ be $\{Y_{k}\}_{k\in I\cup j}$.
By induction, $A_{\phi}$ accepts
$\enc(T,\sigma,\varphi,\mcal{S}_{\phi})$.
Let $r_{\phi}$ be the run of $A_{\phi}$ on
$\enc(T,\sigma,\varphi,\mcal{S}_{\phi})$.
Then $r_{\phi}(t)$ contains a state in $F^{\phi}$, where $t$ is the
root of $T$.
Let $\mcal{S}=\{Y_{k}\}_{k\in I}$, and let
$r$ be the run of $A$ on
$\enc(T,\sigma,\varphi,\mcal{S})$.
It is easy to inductively prove
that $r(v)\supseteq r_{\phi}(v)$ for every vertex $v$.
Therefore $r(t)$ contains an accepting state, so
$A$ accepts $\enc(T,\sigma,\varphi,\mcal{S})$.

For the converse, assume that $A$ accepts
$\enc(T,\sigma,\varphi,\mcal{S})$,
where $\mcal{S}=\{Y_{k}\}_{k\in I}$ is a family of subsets of $E$.
Let $r$ be the run of $A$ on
$\enc(T,\sigma,\varphi,\mcal{S})$.
We recursively nominate a state $q(v)$ chosen from
$r(v)$, for each vertex $v$.
Since $A$ accepts, there is an accepting state
in $r(t)$.
We define $q(t)$ to be this accepting state.
Now assume that $q(v)$ is defined, and that
the children of $v$ are $v_{L}$ and $v_{R}$.
Then there are states $q_{L}\in r(v_{L})$
and $q_{R}\in r(v_{R})$ such that
$\delta_{2}(\sigma(v),q_{L},q_{R})$ contains $q(v)$.
We choose $q(v_{L})$ to be $q_{L}$ and
$q(v_{R})$ to be $q_{R}$.
Thus we have defined $q(v)$ for each vertex $v$.

We will now define a set $Y_{j}\subseteq E$.
Let $v$ be an arbitrary leaf.
We describe a method for deciding if $\varphi^{-1}(v)$ is in
$Y_{j}$.
Let $s\in \{0,1\}^{I}$ be the function that records
whether $\varphi^{-1}(v)$ is in $Y_{k}$, for $k\in I$.
Thus $\delta_{0}(\sigma(v),s)$ includes $q(v)$.
Now
\[
\delta_{0}(\sigma(v),s)
=\delta_{0}^{\phi}(\sigma,s^{0})\cup
\delta_{0}^{\phi}(\sigma,s^{1})
\]
If $q(v)$ is in $\delta_{0}^{\phi}(\sigma,s^{0})$,
we declare $\varphi^{-1}(v)$ not to be in $Y_{j}$.
Otherwise we declare $\varphi^{-1}(v)$ to be in $Y_{j}$.

Let $\mcal{S}_{\phi}$ be the family $\{Y_{k}\}_{k\in I\cup j}$.
Let $r_{\phi}$ be the run of $A_{\phi}$ on
$\enc(T,\sigma,\varphi,\mcal{S}_{\phi})$.
It is easy to prove by induction that
$r_{\phi}(v)$ contains $q(v)$ for every vertex $v$.
Therefore $r_{\phi}(t)$ contains an accepting state,
so $A_{\phi}$ accepts
$\enc(T,\sigma,\varphi,\mcal{S}_{\phi})$.
By induction, this means that $M(A',T,\sigma,\varphi)$
satisfies $\phi$
under the interpretation taking each $X_{k}$ to
$Y_{k}$ for $k\in I\cup j$.
Hence $\exists X_{j}\phi$ is satisfied by the
interpretation taking $X_{k}$ to $Y_{k}$ for $k\in I$.
This completes the proof that $A$ decides
$\psi=\exists X_{j}\phi$, and hence the proof of the
\namecref{family}.
\end{proof}

\section{Characterising automatic classes}
\label{mainproof}

Now we can prove \Cref{mtheorem1}.
We split the proof into two lemmas.

\begin{lemma}
\label{redcap}
Let \mcal{M} be a class of set-systems.
If \mcal{M} is automatic, then it has bounded decomposition-width.
\end{lemma}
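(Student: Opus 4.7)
The plan is to use a suitably modified parse tree as a decomposition and to bound the number of $\sim_U$-classes by the number of possible state-sets that the defining automaton can assign to a subtree.

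Let $A$ be an $\{i\}$-ary tree automaton with state space $Q$ witnessing that $\mcal{M}$ is automatic. Fix $M=(E,\mcal{I})\in\mcal{M}$ with parse tree $(T_M,\sigma_M)$ and bijection $\varphi_M\colon E\to L(T_M)$. First I would convert $T_M$ into a subcubic tree $T'_M$ by deleting the root (which has degree two) and joining its two neighbours with a new edge; all other vertex degrees are preserved, so $(T'_M,\varphi_M)$ is a decomposition of $M$. A short verification shows that every set displayed by an edge of $T'_M$ is either the leaf set of a subtree of $T_M$ rooted at some non-root vertex, or the complement of such a set.

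Now fix an edge $e$ of $T'_M$ displaying $(U_e,V_e)$, and choose notation so that $U_e$ is the leaf set of the subtree of $T_M$ rooted at some vertex $v_e$. For each $X\subseteq U_e$, define $S(X)\subseteq Q$ to be the value of $r(v_e)$ produced by running $A$ on the encoded parse tree when the leaves in $U_e$ encode $X$; by the recursive definition of $r$, this value depends only on labels in the subtree at $v_e$, hence only on $X$. The key step is to show that $S(X)=S(X')$ implies $X\sim_{U_e} X'$. For any $Z\subseteq V_e$, when $A$ runs on the tree encoding $X\cup Z$, we have $r(v_e)=S(X)$, and at every ancestor $u$ of $v_e$, the value $r(u)$ is determined by $S(X)$, by the runs on subtrees disjoint from $v_e$'s subtree (which depend only on $Z$), and by the fixed characters $\sigma_M(u)$. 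Hence $r(t)$ depends on $X$ only through $S(X)$, so $S(X)=S(X')$ forces $A$ to accept the encoding of $X\cup Z$ if and only if it accepts the encoding of $X'\cup Z$. This gives $X\sim_{U_e} X'$.

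It follows that $\sim_{U_e}$ has at most $2^{|Q|}$ equivalence classes; applying \Cref{shadow} to $(U_e,V_e)$ bounds $\sim_{V_e}$ by $2^{2^{|Q|}}$. Since $e$ was arbitrary, $\dw(M)\leq 2^{2^{|Q|}}$, a bound depending only on $A$, so $\mcal{M}$ has bounded decomposition-width. The only real obstacle is formalising the compositional claim that $r(t)$ factors through $S(X)$; this follows directly from the defining recursion for $r$, and from the fact that $\delta_{2}$ distributes as a union over the children's state sets, but the notational bookkeeping of tracking ancestors of $v_e$ is the part that requires care.
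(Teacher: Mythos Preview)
Your proof is correct and follows essentially the same approach as the paper: both convert the parse tree into a decomposition, bound $\sim_{U_e}$ on the subtree side by the number of possible automaton states at the subtree's root, and then invoke \Cref{shadow} for the complementary side. The only cosmetic difference is that the paper first determinises $A$ via \Cref{folder} and then tracks a single state, whereas you track the full state-set $S(X)$ directly; since the subset construction is exactly determinisation, the two arguments and the resulting bounds coincide.
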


\begin{proof}
Since \mcal{M} is automatic, we can let $A$ be an $\{i\}$\dash ary tree automaton
with alphabet $\Sigma$ and state space $Q$ such that for every $M=(E,\mcal{I})$
in \mcal{M}, there is a $\Sigma$\dash tree $(T_{M},\sigma_{M})$ and a
bijection $\varphi_{M}\colon E\to L(T_{M})$ having the
property that $A$ accepts $\enc(T_{M},\sigma_{M},\varphi_{M},\{Y_{i}\})$
if and only if $Y_{i}$ is in \mcal{I}, for any $Y_{i}\subseteq E$.
By applying \Cref{folder}, we can assume that $A$
is deterministic.

Let $M=(E,\mcal{I})$ be an arbitrary set-system in \mcal{M}.
Let $e$ be an arbitrary edge in $T_{M}$, and assume $e$ is incident
with the vertices $u$ and $v$.
The subgraph of $T_{M}$ obtained by deleting $e$ contains
two components, $T_{u}$ and $T_{v}$, containing $u$ and $v$
respectively.
By relabelling as necessary, we will assume that $T_{v}$
contains the root $t$.
We let $U_{e}$ be the set containing elements $z\in E(M)$
such that the path from $\varphi_{M}(z)$ to $t$ contains the edge $e$.
Let $V_{e}$ be $E-U_{e}$.
We will show that the relation $\sim_{U_{e}}$ induces
at most $|Q|$ equivalence classes.
\Cref{shadow} will then imply that \mcal{M} has decomposition-width
at most $2^{|Q|}$.
(Although $(T_{M},\varphi_{M})$ is not a decomposition of $M$, it can easily
be turned into one by contracting an edge incident with the root,
and then forgetting the distinction between left and right children.)

Let $Y$ and $Y'$ be arbitrary subsets of $U_{e}$.
Let $r_{1}$ and $r'_{1}$ be the runs of $A$ on
$\enc(T_{M},\sigma_{M},\varphi_{M},\{Y\})$ and
$\enc(T_{M},\sigma_{M},\varphi_{M},\{Y'\})$
respectively.
We declare $Y$ and $Y'$ to be equivalent if and only if
these runs apply the same singleton set to $u$; that is, if $r_{1}(u)=r'_{1}(u)$.
It is clear that this is an equivalence relation on subsets of $U_{e}$
with at most $|Q|$ equivalence classes, so it remains to
show that this equivalence relation refines $\sim_{U_{e}}$.
Assume that $Y$ and $Y'$ are equivalent subsets, and let
$Z$ be an arbitrary subset of $V_{e}$.
Let $r_{2}$ and $r_{2}'$ be the runs of $A$ on
$\enc(T_{M},\sigma_{M},\varphi_{M},\{Y\cup Z\})$ and
$\enc(T_{M},\sigma_{M},\varphi_{M},\{Y'\cup Z\})$.
Any leaf in $T_{u}$ receives the same label in both
$\enc(T_{M},\sigma_{M},\varphi_{M},\{Y\})$ and
$\enc(T_{M},\sigma_{M},\varphi_{M},\{Y\cup Z\})$.
Now it is easy to prove by induction that
$r_{1}(w)=r_{2}(w)$ for all vertices $w$ in $T_{u}$.
Similarly, $r_{1}'(w)=r_{2}'(w)$ for all such $w$.
In particular, $r_{2}(u)=r_{1}(u)=r_{1}'(u)=r_{2}'(u)$, where
the middle equality is because of the equivalence of $Y$ and $Y'$.
Using the fact that $r_{2}(u)=r_{2}'(u)$, we can 
prove by induction that $r_{2}(w)=r_{2}'(w)$ for all vertices $w$ in $T_{v}$.
In particular, $r_{2}(t)=r_{2}'(t)$, so $A$ accepts
$\enc(T_{M},\sigma_{M},\varphi_{M},\{Y\cup Z\})$
if and only if it accepts
$\enc(T_{M},\sigma_{M},\varphi_{M},\{Y'\cup Z\})$.
This implies that $Y\cup Z$ is in \mcal{I} if and only if $Y'\cup Z$ is.
Thus $\sim_{U_{e}}$ has at most $|Q|$ classes, as desired.
\end{proof}

The other direction is known to Kr\'{a}l \cite{Kra12}
and to Strozecki \cite{Str10,Str11}.

\begin{lemma}
\label{shader}
Let \mcal{M} be a class of set-systems.
If \mcal{M} has bounded decomposition-width, then it is
automatic.
\end{lemma}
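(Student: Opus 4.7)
The plan is to reverse the construction in \Cref{redcap}: given a decomposition $(T,\varphi)$ of a set-system $M\in\mcal{M}$ of width at most $q$, label its vertices so that a single fixed deterministic automaton, reading the labels from leaves to root, can compute the $\sim_{U_v}$-equivalence class of $Y\cap U_v$ at every displayed set $U_v$, and hence decide at the root whether $Y\in\mcal{I}$.

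Fix $q$ so that every $M\in\mcal{M}$ has $\dw(M)\leq q$. The state set will be $Q=\{1,\ldots,q\}\cup\{\bot,\top\}$, with the sole accepting state $\top$. The alphabet $\Sigma$ will be the (finite) union of three kinds of symbols: \emph{leaf symbols} $(c_0,c_1)\in\{1,\ldots,q\}^2$; \emph{internal symbols}, which are functions $f\colon\{1,\ldots,q\}^2\to\{1,\ldots,q\}$; and \emph{root symbols} $(f,F')$ with $f$ an internal symbol and $F'\subseteq\{1,\ldots,q\}$. The transition rules are uniform: $\delta_0((c_0,c_1),s)=\{c_{s(i)}\}$; for an internal symbol $f$, $\delta_2(f,q_L,q_R)=\{f(q_L,q_R)\}$ when $q_L,q_R\in\{1,\ldots,q\}$ and $\{\bot\}$ otherwise; for a root symbol $(f,F')$, $\delta_2((f,F'),q_L,q_R)=\{\top\}$ if $q_L,q_R\in\{1,\ldots,q\}$ and $f(q_L,q_R)\in F'$, and $\{\bot\}$ otherwise.

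Given $M=(E,\mcal{I})\in\mcal{M}$, I would take a decomposition $(T_0,\varphi)$ with $\dw(M;T_0,\varphi)\leq q$, subdivide any edge by a new vertex $t$ to serve as the root (so $t$ has degree two and all other non-leaves have degree three, as required by \Cref{sitcom}), and choose an arbitrary left/right ordering at each internal vertex. For each non-root vertex $v$, enumerate the equivalence classes of $\sim_{U_v}$ by indices $1,\ldots,k_v\leq q$; call this numbering $\iota_v$. Label a leaf $v=\varphi(x)$ by $(\iota_v([\emptyset]),\iota_v([\{x\}]))$. Label an internal non-root vertex $v$ with children $v_L,v_R$ by the function $f$ sending $(q_L,q_R)$ to $\iota_v([X_L\cup X_R])$, where $X_L,X_R$ are any representatives of the classes with indices $q_L,q_R$; this is well defined by \Cref{icecap}. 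At the root, the two children display the partition $(U,E-U)$ given by the subdivided edge, so $\sim_E$ has two classes, and I would set the root symbol $(f,F')$ with $f$ computed as before and $F'$ the set of indices of pairs whose union is in $\mcal{I}$.

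The verification that this works is routine once the setup is in place: a straightforward induction on depth shows that, for any $Y\subseteq E$, the run of $A$ on $\enc(T,\sigma,\varphi,\{Y\})$ assigns to each non-root vertex $v$ the singleton $\{\iota_v([Y\cap U_v])\}$, and then the root rule yields $\top$ exactly when $Y\in\mcal{I}$. The main design obstacle is simply organising the alphabet so that a single fixed automaton handles every $M\in\mcal{M}$: the classes of $\sim_{U_v}$ vary from matroid to matroid, but abstract relabelling by $\{1,\ldots,q\}$ together with carrying the relevant transition tables inside the node labels reduces everything to a finite, matroid-independent $\Sigma$ and $Q$. A minor bookkeeping point is the handling of ``unused'' indices ($q_L>k_{v_L}$ etc.) and degenerate cases with very few leaves, both of which are easily absorbed by the $\bot$ state.
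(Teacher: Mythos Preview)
Your proposal is correct and follows essentially the same approach as the paper: both turn a width-$q$ decomposition into a rooted $\Sigma$-tree whose node labels are small transition tables, so that a fixed automaton merely evaluates the table at each node and tracks the $\sim_{U_v}$-class of $Y\cap U_v$ up to the root. The only differences are cosmetic (your $\{1,\dots,q\}\cup\{\bot,\top\}$ versus the paper's $\{\indep,\dep,q_1,\dots,q_K\}$, and your root pair $(f,F')$ versus the paper's root function outputting $\indep/\dep$ directly); your wording of $F'$ at the root is slightly garbled, but the intent matches.
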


\begin{proof}
Let $K$ be an integer such that $\dw(M)\leq K$
for all members $M\in \mcal{M}$.
Thus, any member \mcal{M} has a
decomposition such that each displayed set
contains at most $K$ equivalence classes.
We construct a tree automaton, $A$, that
decides the formula $\ind(X_{i})$.
The set of states of $A$ is
$Q=\{\indep,\dep,q_{1},\ldots, q_{K}\}$.

Let $M=(E,\mcal{I})$ be an arbitrary set-system in \mcal{M}, and
let $(T,\varphi)$ be a decomposition of $M$, where $\sim_{U}$
has at most $K$ equivalence classes for any set
$U$ displayed by an edge of $T$.
We start by showing how to construct the parse tree
$(T_{M},\sigma_{M})$ by modifying $T$.
First, we arbitrarily choose an edge of $T$, and subdivide
it with the new vertex $t$, where $t$ will be the root of $T_{M}$.
For each non-leaf vertex of $T$, we make an arbitrary
decision as to which of its children is the left child, and which is
the right.
This describes the tree $T_{M}$.
The bijection $\varphi_{M}$ is set to be identical to $\varphi$.

For each edge $e$, let $U_{e}$ be the set of elements
$z\in E$ such that the path from $\varphi_{M}(z)$ to $t$
contains the edge $e$.
Then $\sim_{U_{e}}$ induces at most $K$
equivalence classes.
Let $\ell_{e}$ be some function from
the subsets of $U_{e}$ into
$\{q_{1},\ldots, q_{K}\}$
such that $\ell_{e}(X)=\ell_{e}(X')$ implies
$X\sim_{U_{e}} X'$.
We think of $\ell_{e}$ as applying labels
to the equivalence classes of $\sim_{U_{e}}$.
(Although we allow the possibility that equivalent subsets under
$\sim_{U_{e}}$ receive different labels under $\ell_{e}$.
In other words, the equivalence relation induced by $\ell_{e}$
refines $\sim_{U_{e}}$.)
For each $q_{j}$ in the image $\image(\ell_{e})$, we arbitrarily
choose a representative subset $\Rep_{e}(q_{j})\subseteq U_{e}$
such that $\ell_{e}(\Rep_{e}(q_{j}))=q_{j}$.

Next we describe the function $\sigma_{M}$, which labels each vertex
of $T_{M}$ with a function.
Let $u$ be a leaf of $T_{M}$.
Then $\sigma_{M}(u)$ is a function, $f$, whose domain is $\{0,1\}$.
In the case that $u$ is also the root of $T_{M}$,
we set $f(0)$ to be the symbol \indep\ if $\emptyset$ is in \mcal{I},
and otherwise we set $f(0)$ to be the symbol \dep.
Similarly, $f(1)=\indep$ if $\{\varphi_{M}^{-1}(u)\}$ is in \mcal{I}, and
otherwise $f(1)=\dep$.
Now assume that $u$ is a non-root leaf, and let $e$
be the edge incident with $u$.
Then $f(0)$ is the label $\ell_{e}(\emptyset)$, and
$f(1)$ is $\ell_{e}(\{\varphi_{M}^{-1}(u)\})$.

Now let $u$ be a non-leaf vertex.
Let $e_{L}$ and $e_{R}$ be the edges joining
$u$ to its children.
Then $\sigma_{M}(u)$ is a function $f$ and the domain of $f$ is
$\image(\ell_{e_{L}})\times \image(\ell_{e_{R}})$.
Let $(q_{j},q_{k})$ be in $\image(\ell_{e_{L}})\times \image(\ell_{e_{R}})$,
and assume $X_{j}\subseteq U_{e_{L}}$ is the representative $\Rep_{e_{L}}(q_{j})$,
while $X_{k}$ is $\Rep_{e_{R}}(q_{k})$.
Assume that $u$ is not the root, and let $e$ be the
first edge in the path from $u$ to $t$.
Then $f(q_{j},q_{k})$ is $\ell_{e}(X_{j}\cup X_{k})$, for each
such $(q_{j},q_{k})$.
Next assume that $u$ is the root.
Then $f(q_{j},q_{k})$ is \indep\ if $X_{j}\cup X_{k}\in\mcal{I}$,
and otherwise $f(q_{j},q_{k})=\dep$.

Now we have completed our description of $\sigma_{M}$, which labels the
vertices of $T_{M}$ with functions.
Therefore $(T_{M},\sigma_{M})$ is a $\Sigma$\dash tree,
where $\Sigma$ is the alphabet of partial functions
from $\{0,1\}\cup (2^{\{q_{1},\ldots, q_{K}\}}\times 2^{\{q_{1},\ldots, q_{K}\}})$
into $\{\indep,\dep,q_{1},\ldots, q_{K}\}$.

Our next task is to describe the automaton, $A$.
As we have said, the state space is
$Q=\{\indep,\dep,q_{1},\ldots, q_{K}\}$.
The alphabet is $\Sigma\cup \Sigma\times \{0,1\}^{\{i\}}$, where
$\Sigma$ is the set of partial functions we described in the previous
paragraph.
The only accepting state is $\indep$.
To define the transition rule $\delta_{0}$, we consider the input
$(f,s)$, where $f$ is a function from $\{0,1\}$ into $Q$, and
$s$ is a function in $\{0,1\}^{\{i\}}$.
Then we define $\delta_{0}(f,s)$ to be $\{f(s(i))\}$.
Now we consider the transition rule $\delta_{2}$.
Let $f$ be a function whose domain is a member of
$2^{\{q_{1},\ldots, q_{K}\}}\times 2^{\{q_{1},\ldots, q_{K}\}}$.
Assume that $(q_{i},q_{j})$ is in the domain of $f$.
Then $\delta_{2}(f,q_{i},q_{j})$ is defined to be
$\{f(q_{i},q_{j})\}$.
This completes our description of the automaton $A$.
Note that it is deterministic.

\begin{claim}
\label{pocket}
Let $Y_{i}$ be a subset of $E$.
Let $u$ be a non-root vertex of $T_{M}$, and let $e$ be the
first edge on the path from $u$ to $t$.
Let $q$ be the state applied to $u$ by the 
run of $A$ on $\enc(T_{M},\sigma_{M},\varphi_{M},\{Y_{i}\})$.
Then $(Y_{i}\cap U_{e})\sim_{U_{e}} \Rep_{e}(q)$.
\end{claim}

\begin{proof}
Assume that $u$ has been chosen so that it is as far away from
$t$ as possible, subject to the constraint that the
\namecref{pocket} fails for $u$.
Let $f$ be the function applied to $u$ by the labelling $\sigma_{M}$.

First assume that $u$ is a leaf, so that $U_{e}=\{\varphi_{M}^{-1}(u)\}$.
Then $u$ receives the label $(f,s)$ in
$\enc(T_{M},\sigma_{M},\varphi_{M},\{Y_{i}\})$, where $s(i)$ is $1$
if $\varphi_{M}^{-1}(u)\in Y_{i}$, and is $0$ otherwise.
The construction of $A$ means that $q=f(s(i))$.
If $Y_{i}\cap U_{e}=\emptyset$, then
$q=f(0)=\ell_{e}(\emptyset)$.
Now $\ell_{e}(\Rep_{e}(q))=q$, by definition, so
$\Rep_{e}(q) \sim_{U_{e}}\emptyset$, by the nature of the function $\ell_{e}$.
Therefore $(Y_{i}\cap U_{e})\sim_{U_{e}}\Rep_{e}(q)$, as desired.
The other possibility is that $Y_{i}\cap U_{e}=U_{e}=\{\varphi_{M}^{-1}(u)\}$.
In this case $q=f(1)=\ell_{e}(U_{e})$.
Again $\Rep_{e}(q)\sim_{U_{e}} U_{e}$, and hence
$(Y_{i}\cap U_{e}) \sim_{U_{e}} \Rep_{e}(q)$.

Now we must assume that $u$ is not a leaf, so that $u$ is joined
to its children, $u_{L}$ and $u_{R}$, by the edges
$e_{L}$ and $e_{R}$.
Assume that $u$ receives the label $f$ in
$\enc(T_{M},\sigma_{M},\varphi_{M},\{Y_{i}\})$.
Let $q_{L}$ and $q_{R}$ be the states applied to $u_{L}$ and $u_{R}$
by the run of $A$ on $\enc(T_{M},\sigma_{M},\varphi_{M},\{Y_{i}\})$.
Our inductive assumption on $u$ means that
$(Y_{i}\cap U_{e_{L}})\sim_{U_{e_{L}}} \Rep_{e_{L}}(q_{L})$ and
$(Y_{i}\cap U_{e_{R}})\sim_{U_{e_{R}}} \Rep_{e_{R}}(q_{R})$.
Let $X_{j}$ be $\Rep_{e_{L}}(q_{L})$ and use $X_{k}$ to denote
$\Rep_{e_{R}}(q_{R})$.
Now \Cref{icecap} implies that
$(Y_{i}\cap U_{e}) = (Y_{i}\cap U_{e_{L}})\cup (Y_{i}\cap U_{e_{R}})$
is equivalent to $X_{j}\cup X_{k}$ under $\sim_{U_{e}}$.
The construction of $f$ and $A$ means that $q=\ell_{e}(X_{j}\cup X_{k})$.
Obviously $\ell_{e}(\Rep_{e}(q))=q$, so the nature of the function $\ell_{e}$ implies
$(X_{j}\cup X_{k})\sim_{U_{e}} \Rep_{e}(q)$.
Now we see that $(Y_{i}\cap U_{e})\sim_{U_{e}} \Rep_{e}(q)$, so $u$ fails to provide
a counterexample after all.
\end{proof}

If the root, $t$, is a leaf, then $A$ applies \indep\ to $t$ if and only
if $Y_{i}\cap \{\varphi_{M}^{-1}(t)\}=Y_{i}$ is in \mcal{I}.
Assume that $t$ is not a leaf, and that the edges $e_{L}$ and $e_{R}$
join $t$ to its children, $u_{L}$ and $u_{R}$.
Let $q_{L}$ and $q_{R}$ be the states applied to $u_{L}$ and $u_{R}$.
Let $X_{j}$ be $\Rep_{e_{L}}(q_{L})$, and let $X_{k}$ be
$\Rep_{e_{R}}(q_{R})$.
Then $(Y_{i}\cap U_{e_{L}})\sim_{U_{e_{L}}} X_{j}$ and
$(Y_{i}\cap U_{e_{R}})\sim_{U_{e_{R}}} X_{k}$, by \Cref{pocket}.
If we apply \Cref{icecap} with $U=U_{e_{L}}$ and $V=U_{e_{R}}$, we see that both of
$Y_{i}=(Y_{i}\cap U_{e_{L}})\cup (Y_{i}\cap U_{e_{R}})$ and
$X_{j}\cup X_{k}$ belong to \mcal{I}, or neither does.
In the former case, $A$ applies \indep\ to $t$ during its
run on $\enc(T_{M},\sigma_{M},\varphi_{M},\{Y_{i}\})$,
and hence accepts.
In the latter case, $A$ applies \dep, and does not accept.
Therefore $A$ decides $\ind(Y_{i})$, exactly as we want.
\end{proof}

Recall that a class of matroids is pigeonhole if every subclass
with bounded branch-width also has bounded decomposition-width.
Now we can deduce the following (perhaps not obvious) fact.

\begin{corollary}
\label{dynamo}
Let \mcal{M} be a pigeonhole class of matroids.
Then $\{M^{*}\colon M\in \mcal{M}\}$ is pigeonhole.
\end{corollary}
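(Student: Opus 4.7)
The plan is to use \Cref{mtheorem1} in both directions, together with the classical self-duality of branch-width: I will transfer an automaton for a bounded-branch-width subclass of $\mcal{M}$ into an automaton for the corresponding dual subclass by feeding \Cref{family} an \mso\ formula that expresses dual-independence in terms of primal-independence.

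First I would fix an arbitrary subclass $\mcal{N}^{*}\subseteq \{M^{*}:M\in\mcal{M}\}$ with $\bw(N^{*})\leq \lambda$ for every $N^{*}\in\mcal{N}^{*}$, and set $\mcal{N}:=\{N:N^{*}\in\mcal{N}^{*}\}\subseteq \mcal{M}$. The classical identity $\bw(N)=\bw(N^{*})$ then gives $\bw(N)\leq \lambda$ throughout $\mcal{N}$, and because $\mcal{M}$ is pigeonhole, $\mcal{N}$ has bounded decomposition-width. Then \Cref{shader} supplies an $\{i\}$\dash ary tree automaton $A'$ with alphabet $\Sigma$ and, for each $N\in\mcal{N}$, a parse tree $(T_{N},\sigma_{N})$ together with a bijection $\varphi_{N}\colon E(N)\to L(T_{N})$ such that $N=M(A',T_{N},\sigma_{N},\varphi_{N})$.

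Next I would write a single \mso\ formula $\psi$ with free variable $X_{i}$ asserting that ``$E-X_{i}$ contains a basis.'' Informally,
\[
\exists X_{j}\bigl(\ind(X_{j})\wedge (X_{i}\cap X_{j}=\emptyset)\wedge \forall X_{k}(\sing(X_{k})\wedge \neg(X_{k}\subseteq X_{j})\to \neg \ind(X_{j}\cup X_{k}))\bigr),
\]
where disjointness and union are expanded into the strict syntax of \Cref{logic} via $\subseteq$ and $\sing$ in the standard way. For any matroid $N$ and any $Y\subseteq E(N)$, the set-system $(E(N),\mcal{I}(N))$ satisfies $\psi$ under $X_{i}\mapsto Y$ iff $E(N)-Y$ is spanning in $N$, i.e.\ iff $Y$ is independent in $N^{*}$. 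Applying \Cref{family} to $A'$ and $\psi$ then produces an $\{i\}$\dash ary automaton $A^{*}$ such that $A^{*}$ accepts $\enc(T,\sigma,\varphi,\{Y\})$ exactly when $Y$ is independent in the dual of $M(A',T,\sigma,\varphi)$. Reusing the parse data of each $N$ yields $N^{*}=M(A^{*},T_{N},\sigma_{N},\varphi_{N})$, so $\mcal{N}^{*}$ is automatic; \Cref{redcap} then gives that $\dw$ is bounded on $\mcal{N}^{*}$, which is exactly what is required.

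The only real work is to verify that $\psi$ can be put into the formal \mso\ syntax of \Cref{logic}, which is a routine unfolding of ``disjoint from,'' ``has union,'' and ``singleton not contained in a set'' using $\subseteq$ and $\sing$. The duality-invariance of branch-width is standard, and everything else is a direct appeal to \Cref{mtheorem1} and \Cref{family}.
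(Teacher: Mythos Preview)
Your proof is correct and follows essentially the same route as the paper: use branch-width self-duality to transfer the bounded-branch-width condition to the primal class, invoke the pigeonhole hypothesis to get bounded decomposition-width, apply \Cref{shader} to get an automaton, feed \Cref{family} an \mso\ formula expressing coindependence, and then apply \Cref{redcap} to the resulting dual automaton. The paper writes the coindependence formula as ``there exists a basis disjoint from $X_{1}$'' via an explicit $\operatorname{Basis}$ predicate, while you phrase it as ``$E-X_{i}$ contains a maximal independent set,'' but these are equivalent and the overall architecture is identical.
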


\begin{proof}
Assume that \mcal{M} is pigeonhole.
For every positive integer, $\lambda$, there is an integer
$\rho(\lambda)$ such that any matroid in \mcal{M} with branch-width at
most $\lambda$ has decomposition-width at most $\rho(\lambda)$.

Let $\lambda$ be an arbitrary positive integer.
Let $\mcal{M}_{\lambda}$ be the class of matroid in \mcal{M} with branch-width at
most $\lambda$.
As $\mcal{M}_{\lambda}$ has bounded decomposition-width,
\Cref{shader} implies that it is an automatic class.
Let $A'$ be an $\{i\}$\dash ary automaton such that
for every matroid $M\in \mcal{M}_{\lambda}$,
there is a parse tree $(T_{M},\sigma_{M})$ and a
bijection $\varphi_{M}\colon E(M)\to L(T_{M})$ such that
$M=M(A',T_{M},\sigma_{M},\varphi_{M})$.

The predicate
\[
\formula{Basis}(X_{2})=\ind(X_{2})\land \forall X_{3}
((\ind(X_{3})\land X_{2}\subseteq X_{3})\to X_{3}\subseteq X_{2})
\]
is satisfied exactly by interpretations that take $X_{2}$ to a basis
of a matroid.
Similarly,
\[
\formula{Coind}(X_{1})=\exists X_{2} (\formula{Basis}(X_{2})\land \neg\exists X_{4}
(\sing(X_{4})\land X_{4}\subseteq X_{1}\land X_{4}\subseteq X_{2})
\]
is satisfied exactly by the interpretations that take
$X_{1}$ to coindependent sets.
Now \Cref{family} implies that there is an automaton, $A$,
that accepts $\enc(T_{M},\sigma_{M},\varphi_{M},\{Y_{i}\})$
if and only if $Y_{i}$ is coindependent in $M$, for each $M\in \mcal{M}_{\lambda}$.
Therefore $M(A,T_{M},\sigma_{M},\varphi_{M})=M^{*}$, so
this establishes that $\{M^{*}\colon M\in\mcal{M}_{\lambda}\}$ is
an automatic class of matroids.
\Cref{redcap} implies there is an integer $\rho^{*}(\lambda)$ such that
$\dw(M^{*})\leq \rho^{*}(\lambda)$ whenever $M$ is in $\mcal{M}_{\lambda}$.

The branch-width of a matroid is equal to the branch-width of its dual
\cite[Proposition 14.2.3]{Oxl11}.
Hence
\[
\{M^{*}\colon M\in \mcal{M},\ \bw(M^{*})\leq \lambda\}=\{M^{*}\colon M\in \mcal{M}_{\lambda}\}.
\]
We have just shown that any matroid in this class has decomposition-width
at most $\rho^{*}(\lambda)$, and this establishes the result.
\end{proof}

We do not know of a proof of \Cref{dynamo} that does not rely on \Cref{mtheorem1}.
We do not know if the dual of a strongly pigeonhole class must
be strongly pigeonhole, but we conjecture that this is the case.

\begin{conjecture}
\label{spring}
Let \mcal{M} be a strongly pigeonhole class of matroids.
Then $\{M^{*}\colon M\in\mcal{M}\}$ is strongly pigeonhole.
\end{conjecture}

\section{Complexity theory}
\label{complexity}

In this section, we discuss complexity theoretical applications
of tree automata.
We start with a simple observation.

\begin{proposition}
\label{ermine}
Let $\psi$ be any sentence in \cmso.
Let \mcal{M} be an automatic class of set-systems.
There exists a Turing Machine which will take as input a parse
tree for any set system $M=(E,\mcal{I})\in\mcal{M}$
and then test whether or not $M$ satisfies $\psi$.
The running time is $O(n)$, where $n=|E|$.
\end{proposition}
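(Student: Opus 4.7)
The plan is to invoke \Cref{family} in the degenerate case that $\psi$ has no free variables, and then simply run the resulting tree automaton on the parse tree. Since \mcal{M} is automatic, there is an $\{i\}$\dash ary tree automaton $A'$ (with some alphabet $\Sigma$) such that every $M\in\mcal{M}$ has a parse tree $(T_{M},\sigma_{M})$ and bijection $\varphi_{M}$ with $M=M(A',T_{M},\sigma_{M},\varphi_{M})$. Apply \Cref{family} to $A'$ and $\psi$: because the free-variable set $I$ of $\psi$ is empty, the resulting automaton $A$ has alphabet $\Sigma$ (no extra bit-string component), and by the defining property of \Cref{family}, $A$ accepts the $\Sigma$\dash tree $\enc(T,\sigma,\varphi,\emptyset)=(T,\sigma)$ if and only if $M(A',T,\sigma,\varphi)$ satisfies $\psi$. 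In particular, $A$ accepts $(T_{M},\sigma_{M})$ iff $M$ satisfies $\psi$. By \Cref{folder}, we may moreover assume $A$ is deterministic; this is important because it lets us store a single state (rather than a set of states) at each vertex during the run.

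Next I would describe the Turing Machine. On input (a suitable encoding of) a parse tree $(T_{M},\sigma_{M},\varphi_{M})$, the machine performs a bottom-up traversal of $T_{M}$, at each vertex $v$ computing the unique state $r(v)$ of the run of $A$: if $v$ is a leaf, look up $\delta_{0}(\sigma_{M}(v))$; otherwise, once its children $v_{L},v_{R}$ have been processed, look up $\delta_{2}(\sigma_{M}(v),r(v_{L}),r(v_{R}))$. At the root, accept iff the computed state lies in the accepting set $F$ of $A$. Correctness is immediate from the characterising property of $A$ provided by \Cref{family}.

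For the running time, note that $A$ depends only on $\psi$ and $A'$, hence its state set $Q$, alphabet $\Sigma$, and transition tables $\delta_{0},\delta_{2}$ all have size bounded by a constant independent of $M$. Each transition can therefore be looked up in $O(1)$ time (the constant depending on $\psi$ and on the fixed automaton $A'$ for the class). Since $T_{M}$ is subcubic with $|E|=n$ leaves, it has at most $2n-1$ vertices, so the traversal visits $O(n)$ nodes and does $O(1)$ work at each. This gives the claimed $O(n)$ bound.

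There is no real obstacle here: the statement is essentially a repackaging of \Cref{family} and \Cref{folder} into complexity-theoretic language, together with the standard observation that a deterministic bottom-up tree automaton runs in time linear in the size of the input tree. The only points deserving explicit mention are (i)~that the encoding $\enc(T_{M},\sigma_{M},\varphi_{M},\emptyset)$ coincides with $(T_{M},\sigma_{M})$ because $\psi$ has no free variables, so the machine does not need any extra input beyond the parse tree itself, and (ii)~that all constants absorbed into the $O(\cdot)$ notation depend on $\psi$ (and on the fixed automaton $A'$ witnessing that \mcal{M} is automatic), but not on $M$.
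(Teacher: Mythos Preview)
Your proposal is correct and follows essentially the same approach as the paper: invoke \Cref{family} (with empty free-variable set) to obtain an automaton $A$ deciding $\psi$, then run $A$ bottom-up on the parse tree, noting that the tree has $2n-1$ nodes and each node is processed in constant time. The paper's proof is slightly terser and does not explicitly pass through \Cref{folder} (constant-time processing per node holds even for the non-deterministic automaton, since $Q$ is fixed), but this is a minor expository difference, not a substantive one.
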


\begin{proof}
Since \mcal{M} is automatic, we can assume that $A'$ is an $\{i\}$\dash ary tree
automaton with alphabet $\Sigma$, and for any
$M=(E,\mcal{I})\in \mcal{M}$ there is a parse tree $(T_{M},\sigma_{M})$ of $M$ relative to $A'$.
So there is a bijection $\varphi_{M}\colon E\to L(T_{M})$ such that
$A'$ accepts $\enc(T_{M},\sigma_{M},\varphi_{M},\{Y_{i}\})$ if and only if
$Y_{i}\in \mcal{I}$.
The proof of \Cref{family} is constructive, and shows us how to build an
automaton, $A$, which will accept $\enc(T_{M},\sigma_{M},\varphi_{M},\emptyset)$
if and only if $M$ satisfies $\psi$.
This construction is done during pre-processing, so it has no impact on the running time.
While $A$ processes $\enc(T_{M},\sigma_{M},\varphi_{M},\emptyset)$, the computation
that occurs at each node takes a constant amount of time.
So the running time of $A$ is proportional to the number of nodes.
This number is $2n-1$, so the result follows.
\end{proof}

Various models of matroid computation have been studied.
Here, we will concentrate on classes of matroids that
have compact descriptions.

\begin{definition}
\label{umpire}
Let \mcal{M} be a class of matroids.
A \emph{succinct representation} of \mcal{M} is a
relation, $\Delta$, from \mcal{M} into the set of finite binary strings.
We write $\Delta(M)$ to indicate any string in the image of $M\in \mcal{M}$.
We insist that there is a polynomial $p$ and a Turing Machine which will return an
answer to the question ``Is $X$ independent in $M$?" in time bounded by $p(|E(M)|)$.
Here the input is of the form $(\Delta(M),X)$, where $M\in\mcal{M}$ and $X$
is a subset of $E(M)$.
\end{definition}

Thus we insist that an independence oracle can
be efficiently simulated using the output of a succinct representation.
This constraint implies that $\Delta(M)$ and $\Delta(M')$ are disjoint when $M\ne M'$.
Note that the length $|\Delta(M)|$ can be no longer than $p(|E(M)|)$.
Descriptions of graphic or finite-field representable matroids
as graphs or matrices provide succinct representations.

\begin{proposition}
\label{lychee}
Let \mcal{M} be a class of matroids with succinct representation $\Delta$.
There is a Turing Machine which, for any integer $\lambda>0$,
will take as input any $\Delta(M)$ for $M\in \mcal{M}$ satisfying
$\bw(M)\leq \lambda$, and return a branch-decomposition
of $M$ with width at most $3\lambda+1$.
The running time is $O(8^{\lambda}n^{3.5}p(n))$, where $n=|E(M)|$
and $p$ is as in \textup{\Cref{umpire}}.
\end{proposition}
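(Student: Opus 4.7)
The plan is to invoke the Oum--Seymour approximation algorithm for branch-decompositions of symmetric submodular integer-valued functions: given oracle access to such a function $f$ on an $n$-element set together with a parameter $k$, their algorithm makes $O(8^{k} n^{2.5})$ oracle queries and does $O(8^{k} n^{3.5})$ additional work, and either outputs a branch-decomposition of $f$ of width at most $3k+1$ or certifies that $\bw(f)>k$. The proposition then reduces to two observations: (i) the matroid connectivity function $\lambda_{M}$ is symmetric and submodular, and its branch-width in this abstract sense coincides with $\bw(M)$ as defined in the excerpt; and (ii) an oracle for $\lambda_{M}$ can be implemented efficiently from $\Delta(M)$.

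For (ii), I would use $\lambda_{M}(U)=r_{M}(U)+r_{M}(E(M)-U)-r(M)$, reducing the problem to computing matroid ranks. For any $X\subseteq E(M)$, the rank $r_{M}(X)$ is the size of any maximal independent subset of $X$, and such a subset can be built greedily: start with $B=\emptyset$ and, scanning the elements of $X$ in any order, add each $x$ to $B$ precisely when $B\cup\{x\}$ remains independent. This requires $|X|$ independence queries, each answered in time $p(n)$ by the oracle simulated from $\Delta(M)$. Hence one evaluation of $\lambda_{M}$ costs $O(n\,p(n))$.

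Plugging this evaluation routine into Oum--Seymour with parameter $\lambda$ produces, since $\bw(M)\leq\lambda$ is given, a branch-decomposition of $M$ of width at most $3\lambda+1$. The total running time combines the $O(8^{\lambda}n^{2.5})$ oracle queries at per-query cost $O(n\,p(n))$ with the $O(8^{\lambda}n^{3.5})$ algorithmic overhead, all of which fit inside the stated $O(8^{\lambda}n^{3.5}p(n))$ bound (absorbing the overhead into $p(n)\geq 1$).

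There is no substantive mathematical obstacle here; the work is essentially packaging the Oum--Seymour machinery for matroids presented by succinct representations. The two points requiring verification are that matroid branch-width defined via $\lambda_{M}(U_{e})+1$ on displayed partitions really is the Oum--Seymour branch-width of the submodular function $\lambda_{M}$ (immediate from the definitions), and that the greedy rank computation is correct (standard from the matroid exchange property). The main thing to be careful about is the polynomial bookkeeping, so that the per-query oracle cost, number of queries, and algorithmic overhead combine cleanly into the claimed $O(8^{\lambda}n^{3.5}p(n))$ time bound.
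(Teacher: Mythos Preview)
Your approach is essentially the paper's: both invoke Oum--Seymour's approximation scheme (indeed the paper explicitly says the proof is ``nothing more than an analysis of the proof of \cite[Corollary 7.2]{OS06}''), and the paper's sketch simply unpacks the iterative refinement procedure you are treating as a black box.

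One point deserves tightening. You state the $O(8^{k}n^{2.5})$ oracle-query bound for the Oum--Seymour algorithm on \emph{general} symmetric submodular functions, and then implement a $\lambda_{M}$-oracle. But the general submodular version relies on submodular function minimisation as its inner subroutine, which does not achieve the $n^{2.5}$ factor; that exponent comes specifically from replacing the minimisation step by matroid intersection (Cunningham's bound \cite{Cun86}), which requires an independence oracle, not merely a $\lambda_{M}$-oracle. The paper is explicit about this: it constructs the auxiliary matroid $M_{B}$, loops over the $2^{3\lambda+1}$ bipartitions of a basis of $M_{B}$, and for each invokes matroid intersection on $M/D_{1}\backslash D_{2}$ and $M\backslash D_{1}/D_{2}$, citing Cunningham for the $O(n^{2.5})$ oracle-call bound. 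Your plan is correct once you cite the matroid-specific Corollary~7.2 of Oum--Seymour (which already bakes in matroid intersection) rather than the generic submodular theorem, or equivalently once you note that $\Delta(M)$ supplies an independence oracle and not just a connectivity oracle, so that the matroid-intersection shortcut is available.
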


\begin{proof}
The proof of this \namecref{lychee} requires nothing more than an analysis
of the proof of \cite[Corollary 7.2]{OS06}, so we provide a sketch only.
Let $M=(E,\mcal{I})$ be a matroid in \mcal{M} with $\bw(M)\leq \lambda$.
A \emph{partial decomposition} of $M$ consists of a subcubic tree,
along with a partition of $E$ and a bijection from the blocks of this partition
into the leaf-set of $T$.
Each edge, $e$, of $T$ partitions $E$ into two sets, $U_{e}$ and $V_{e}$, and
the \emph{width} of $e$ is  $r_{M}(U_{e})+r_{M}(V_{e})-r(M)+1$.
We start with a partial decomposition containing a single block, and
successively partition blocks into two parts, until every block is a singleton set.
This process therefore takes $n-1$ steps.
At each step, we ensure that each edge has width at most $3\lambda+1$, so
at the end of the process, we will have the desired decomposition.
Assume that $U$ is a block in the partition with $|U|>1$.
Let $l$ be the leaf corresponding to $U$, and let $e$ be the edge
incident with $l$ (if $T$ is not a single vertex).
Let $V$ be $E-U$.
We inductively assume that the weight of $e$ is at most $3\lambda+1$.
If it is less than $3\lambda+1$, then we arbitrarily choose
an element $u\in U$, subdivide $e$ and join a new leaf to this new vertex.
We label the new leaf with $\{u\}$, and relabel the leaf corresponding to $U$ with
$U-\{u\}$.
Therefore we can assume that the width of $e$ is exactly $3\lambda+1$, and
hence $\lambda_{M}(U)=3\lambda$ (assuming $T$ has more than one vertex).

We use the greedy algorithm to find 
an arbitrary basis, $B$, of $M$ in $O(np(n))$ steps.
For any subset $X\subseteq U$, define $\lambda_{B}(X)$ to be
\[r_{M}(X\cup (B-V))+r_{M}(V\cup (B-X))-|B-X|-|B-V| + 1.\]
Then $\lambda_{B}(X)$ is the rank function of a matroid on the
ground set $U$ \cite[Propositions 4.1 and 7.1]{OS06}.
Let this matroid be $M_{B}$.
The rank of $M_{B}$ is $3\lambda+1$.
Finding the rank of $X\cup (B-V)$ in $M$ takes $O(np(n))$ steps,
using the greedy algorithm, and similarly for $V\cup (B-X)$ in $M$.
By again using the greedy algorithm, we can find a
basis, $D$, of $M_{B}$, in $O(np(n) + n^{2}p(n))$ steps.

Now we loop over all partitions of $D$ into an ordered pair of two sets,
$(D_{1},D_{2})$.
This takes $2^{3\lambda+1}$ steps.
We let $M_{1}$ and $M_{2}$ be $M/D_{1}\ba D_{2}$ and
$M\ba D_{1}/D_{2}$ respectively.
The ranks of $M_{1}$ and $M_{2}$ can be found in $O(np(n))$ time,
and it then takes $p(n)$ steps to test whether a subset is a basis of
$M_{1}$ or $M_{2}$.
Now it follows from \cite[Theorem 4.1]{Cun86} that we can
use an equivalent form of the matroid intersection algorithm
to find a set, $Z$, satisfying $D_{1}\subseteq Z\subseteq E-D_{2}$
that minimises $\lambda_{M}(Z)$.
Furthermore, this can be done in $O(np(n) + n^{2.5}p(n))$ steps.
If $\lambda_{M}(Z)+1\geq \min\{|D_{1}|,|D_{2}|\}$, then
$\bw(M) \geq |D|/3=\lambda+1/3$ and we
have a contradiction \cite[Theorem 5.1]{OS06}.
Therefore $\lambda_{M}(Z)+1<\min\{|D_{1}|,|D_{2}|\}$.
We subdivide $e$ and attach a leaf to the new vertex.
This leaf corresponds to the set $U \cap Z$, and we relabel $l$ with the set
$U-Z$.
(If $T$ has only one vertex, we simply create a tree with two vertices,
and label these with  $U \cap Z$ and $U-Z$.)

The proof of \cite[Theorem 5.2]{OS06} shows that the width of every edge
in the new decomposition is at most $3\lambda+1$, so we
can reiterate this process until we have a branch decomposition.
\end{proof}

We wish to develop efficient model-checking algorithms for
strongly pigeonhole matroid classes.
We have to strengthen this condition somewhat, by insisting not
only that there is a bound on the number of equivalence
classes, but that we can efficiently
compute the equivalence relation (or a refinement of it).

\begin{definition}
\label{yakuza}
Let \mcal{M} be a class of matroids with a succinct representation $\Delta$.
Assume there is a constant, $c$, and that for every integer, $\lambda>0$, there is an integer, $\pi(\lambda)$, and a
Turing Machine, $M_{\lambda}$, with the following properties:
$M_{\lambda}$ takes as input any tuple of the form $(\Delta(M),U,X,X')$,
where $M$ is in \mcal{M}, $U\subseteq E(M)$ satisfies $\lambda_{M}(U)\leq \lambda$,
and $X$ and $X'$ are subsets of $U$.
The machine $M_{\lambda}$ computes an equivalence
relation, $\approx_{U}$, on the subsets of $U$, so that
$M_{\lambda}$ accepts $(\Delta(M),U,X,X')$ if and only if
$X\approx_{U} X'$.
Furthermore,
\begin{enumerate}[label=\textup{(\roman*)}]
\item $X\approx_{U} X'$ implies $X\sim_{U} X'$,
\item the number of equivalence classes under
$\approx_{U}$ is at most $\pi(\lambda)$, and
\item $M_{\lambda}$ runs in time bounded by $O(\pi(\lambda)|E(M)|^{c})$.
\end{enumerate}
Under these circumstances, we say that \mcal{M} is
\emph{efficiently pigeonhole} (relative to $\Delta$).
\end{definition}

It follows immediately that if a class of matroids is
efficiently pigeonhole, then it is strongly pigeonhole.
We will later see that many natural classes 
are efficiently pigeonhole.

\begin{theorem}
\label{jibber}
Let \mcal{M} be a class of matroids with a succinct representation $\Delta$.
Assume that \mcal{M} is efficiently pigeonhole.
Let $\lambda$ be a positive integer.
There is a Turing Machine which
accepts as input any $\Delta(M)$ when $M\in\mcal{M}$
satisfies $\bw(M)\leq \lambda$, and returns a parse tree for $M$.
The running time is $O((8^{\lambda}n^{3.5}+\pi(3\lambda)^{2})p(n)+\pi(3\lambda)^{4}n^{c+1})$,
where $n=|E(M)|$, $p$ is as in \textup{\Cref{umpire}}, and $\pi$ and $c$ are as in \textup{\Cref{yakuza}}.
\end{theorem}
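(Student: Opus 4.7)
The plan is to compose three ingredients: (a) compute a branch-decomposition of small width using \Cref{lychee}; (b) on each displayed separation, extract a refinement of $\sim_{U}$ with a bounded number of classes via the efficiently pigeonhole machinery of \Cref{yakuza}; (c) assemble these labels into a parse tree by directly following the construction in the proof of \Cref{shader}. First I apply \Cref{lychee} to $\Delta(M)$ to produce a branch-decomposition $(T,\varphi)$ of $M$ of width at most $3\lambda+1$, so every edge $e$ of $T$ displays a set $U_e$ with $\lambda_M(U_e)\leq 3\lambda$; this costs $O(8^{\lambda} n^{3.5} p(n))$. I then subdivide an arbitrary edge to introduce a root $t$ and arbitrarily designate a left and right child at each internal vertex, obtaining $T_M$, with bijection $\varphi_M=\varphi$.

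Next I process the edges of $T_M$ bottom-up, maintaining for each edge $e$ a growing list of representatives $\Rep_e(q_j)\subseteq U_e$ together with labels $q_j\in\{q_1,\ldots,q_{\pi(3\lambda)}\}$, where the implicit labeling $\ell_e$ refines $\approx_{U_e}$ (and hence $\sim_{U_e}$). For a leaf edge incident with $\varphi^{-1}(u)$, I enumerate $\emptyset$ and $\{\varphi^{-1}(u)\}$ and call $M_{3\lambda}$ to test whether they are $\approx_{U_e}$-equivalent, assigning labels accordingly. For an internal edge $e$ whose endpoint $u$ (farther from the root) has children joined by edges $e_L,e_R$, I iterate over each pair $(q_j,q_k)\in\image(\ell_{e_L})\times\image(\ell_{e_R})$, form $X=\Rep_{e_L}(q_j)\cup\Rep_{e_R}(q_k)\subseteq U_e$, and compare $X$ to the representatives stored so far for $e$ using $M_{3\lambda}$; I either assign $X$ the matching label or record it as a new representative. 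Since $\approx_{U_e}$ has at most $\pi(3\lambda)$ classes, at most $\pi(3\lambda)$ representatives are ever stored per edge. With at most $\pi(3\lambda)^2$ candidate unions per edge, at most $\pi(3\lambda)$ comparisons each, and per-comparison cost $O(\pi(3\lambda) n^c)$ from \Cref{yakuza}, this pass costs $O(\pi(3\lambda)^4 n^{c+1})$ across all $O(n)$ edges.

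Finally I read off $\sigma_M$ directly from the stored $\ell_e$'s, exactly as in the proof of \Cref{shader}: each leaf receives a function from $\{0,1\}$ recording $\ell_e(\emptyset)$ and $\ell_e(\{\varphi_M^{-1}(u)\})$; each internal non-root vertex $u$ with child edges $e_L,e_R$ and parent edge $e$ receives the function $(q_j,q_k)\mapsto\ell_e(\Rep_{e_L}(q_j)\cup\Rep_{e_R}(q_k))$, which has already been computed in the previous step; and the root receives a function $(q_j,q_k)\mapsto\indep$ or $\dep$ obtained by calling the independence oracle on $\Rep_{e_L}(q_j)\cup\Rep_{e_R}(q_k)$ for the two child edges of the root, at total cost $O(\pi(3\lambda)^2 p(n))$. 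Summing the three contributions yields the claimed bound $O((8^{\lambda} n^{3.5}+\pi(3\lambda)^2)p(n)+\pi(3\lambda)^4 n^{c+1})$.

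The main obstacle is verifying correctness: that the $(T_M,\sigma_M,\varphi_M)$ we emit really is a parse tree for $M$ relative to the automaton $A$ built in \Cref{shader}, even though we never enumerate all subsets of $U_e$ nor all classes of $\approx_{U_e}$. The point is that the only equivalence classes the run of $A$ ever encounters on $U_e$ are those containing some $\Rep_{e_L}(q_j)\cup\Rep_{e_R}(q_k)$: by \Cref{pocket} the state assigned at the child endpoint of $e$ on input $Y\subseteq E$ is always a representative $\sim_{U_e}$-equivalent to $Y\cap U_e$, and \Cref{icecap} guarantees that $Y\cap U_e=(Y\cap U_{e_L})\cup(Y\cap U_{e_R})$ is $\sim_{U_e}$-equivalent to a union of children representatives. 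Hence the labels $\ell_e$ we generate suffice for $\sigma_M$, and the inductive argument of \Cref{shader} applies to show $M=M(A,T_M,\sigma_M,\varphi_M)$.
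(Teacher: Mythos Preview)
Your proposal is correct and follows essentially the same approach as the paper's proof: both apply \Cref{lychee} to obtain a width-$(3\lambda+1)$ branch-decomposition, root it, then reproduce the construction of \Cref{shader} bottom-up using the computable relation $\approx_{U_e}$ in place of $\sim_{U_e}$, building only the representatives that actually arise as unions of child representatives, with the same cost accounting and the same appeal to \Cref{pocket} (plus the observation that $\approx_{U_e}$ refines $\sim_{U_e}$) for correctness.
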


\begin{proof}
We start by applying \Cref{lychee} to obtain a branch-decomposition with width
at most $3\lambda+1$.
This construction takes $O(8^{\lambda}n^{3.5}p(n))$ steps.
Let $T$ be the tree underlying the branch-decomposition,
and let $\varphi$ be the bijection from $E(M)$ to the leaves of $T$.
We construct $T_{M}$ by subdividing an edge of $T$ with a root vertex, $t$,
and distinguishing between left and right children.
We let $\varphi_{M}$ be $\varphi$.
If $U$ is a set displayed by an edge of $T_{M}$, then
$\lambda_{M}(U)\leq 3\lambda$.
We let $K$ be $\pi(3\lambda)$, where $\pi$ is the function provided by
\Cref{yakuza}.

From this point we closely follow the proof of \Cref{shader}.
For each edge, $e$, in $T_{M}$, we perform the following procedure.
Let $u$ be the end-vertex of $e$ that is further from $t$ in $T_{M}$, and define
$U_{e}$ as in the proof of \Cref{shader}.
We will construct representative subsets, $\Rep_{e}(q)$, of $U_{e}$,
where $q$ is a label in $\{q_{1},\ldots, q_{K}\}$, in such a way that
distinct representative states are inequivalent under $\approx_{U_{e}}$.
At the same time, we will construct a function, $f$, which will be applied to
$u$ by the labelling function $\sigma_{M}$.

First assume that $u$ is a leaf, so that $U_{e}=\{\varphi_{M}^{-1}(u)\}$.
The domain of $f$ will be $\{0,1\}$.
As in \Cref{shader}, we must consider the case that
$u$ is also the root of $T_{M}$.
In this case, we set $f(0)$ to be \indep, and
set $f(1)$ to be \indep\ or \dep\ depending on whether $\{\varphi_{M}^{-1}(u)\}$ is
independent.
Now assume that $u$ is a non-root leaf.
Let $\emptyset$ be $\Rep_{e}(q_{1})$, and set $f(0)$ to be $q_{1}$.
In $O(Kn^{c})$ steps, we test whether
$\{\varphi_{M}^{-1}(u)\}\approx_{U_{e}} \emptyset$.
If so, then we set $f(1)$ to be $q_{1}$.
Assuming that $\{\varphi_{M}^{-1}(u)\}\not\approx_{U_{e}} \emptyset$,
we define $\Rep_{e}(q_{2})$ to be $U_{e}=\{\varphi_{M}^{-1}(u)\}$, and we
set $f(1)$ to be $q_{2}$.

Now assume that $u$ is not a leaf.
Let $e_{L}$ and $e_{R}$ be the edges joining $u$ to its children.
Recursively, we assume that $\Rep_{e_{L}}(q)$ is defined when
$q$ is in $\{q_{1},\ldots, q_{s_{L}}\}$, and
$\Rep_{e_{R}}(q)$ is defined when
$q$ is in $\{q_{1},\ldots, q_{s_{R}}\}$.
The function $f$ will have domain
$\{q_{1},\ldots, q_{s_{L}}\}\times \{q_{1},\ldots, q_{s_{R}}\}$.
For each of the $O(K^{2})$ pairs, $(q_{j},q_{k})$, with $q_{j}\in \{q_{1},\ldots, q_{s_{L}}\}$ and
$q_{k}\in \{q_{1},\ldots, q_{s_{R}}\}$, we perform the following steps.
Let $X_{j}$ stand for $\Rep_{e_{L}}(q_{j})$ and $X_{k}$ stand for $\Rep_{e_{R}}(q_{k})$.
In time bounded by $O(K^{2}n^{c})$, we check whether
$X_{j}\cup X_{k}$ is equivalent under $\approx_{U_{e}}$ to
any of the (at most $K$) representative subsets of $U_{e}$ that
we have already constructed.
If not, then we define $\Rep_{e}(q_{l})$ to be $X_{j}\cup X_{k}$,
where $q_{l}$ is the first label in $\{q_{1},\ldots, q_{K}\}$ that has not already
been assigned to a representative subset of $U_{e}$.
In this case, we set $f(q_{j},q_{k})$ to be $q_{l}$.
However, if $X_{j}\cup X_{k}$ is equivalent under $\approx_{U_{e}}$
to a previously chosen representative, say $\Rep_{e}(q_{m})$, then we set
$f(q_{j},q_{k})$ to be $q_{m}$.
Note that the number of edges in $T_{M}$ is $2n-2$, so this entire procedure takes
$O(n(K^{4}n^{c}))$ steps.

Finally, let the children of the root, $t$, be $u_{L}$ and $u_{R}$, and assume that
$t$ is joined to these children by $e_{L}$ and $e_{R}$.
Assume $\Rep_{e_{L}}(q)$ is defined when $q$ is in $\{q_{1},\ldots, q_{s_{L}}\}$, and
$\Rep_{e_{R}}(q)$ is defined when $q$ is in $\{q_{1},\ldots, q_{s_{R}}\}$.
Again, $f$ has domain $\{q_{1},\ldots, q_{s_{L}}\}\times \{q_{1},\ldots, q_{s_{R}}\}$.
We define $f(q_{j},q_{k})$ to be \indep\ if $\Rep_{e_{L}}(q_{j}) \cup \Rep_{e_{R}}(q_{k})$
is independent, and we let $f(q_{j},q_{k})$ be \dep\ otherwise.
Constructing this function takes $O(K^{2}p(n))$ steps.
Now we have completed the construction of the parse tree $(T_{M},\sigma_{M})$,
and we have done so in $O((8^{\lambda}n^{3.5}+K^{2})p(n)+K^{4}n^{c+1})$ steps.

To complete the proof, we must check that $(T_{M},\sigma_{M})$ genuinely behaves
as a parse tree should.
The automaton $A$ is exactly as in \Cref{shader}.
But the statement of \Cref{pocket} still holds in this case, and can be proved
by the same argument.
There is one point which deserves some attention:
with the notation as in the proof of \Cref{pocket}, the fact that the state $q$ is applied
to $u$ means that $(X_{j}\cup X_{k})\approx_{U_{e}} \Rep_{e}(q)$.
But the definition of $\approx_{U_{e}}$ then implies
$(X_{j}\cup X_{k})\sim_{U_{e}} \Rep_{e}(q)$, and hence
$(Y_{i}\cap U_{e})\sim_{U_{e}} \Rep_{e}(q)$, exactly as in \Cref{pocket}.
The rest of the proof follows exactly as in \Cref{shader}.
\end{proof}

Now \Cref{lounge} follows immediately from \Cref{ermine} and \Cref{jibber}.

\subsection{Automata and $2$-sums}

In \cite{FMN-II}, we extend Hlin\v{e}n\'{y}'s Theorem
to the classes of bicircular matroids and \hgg\ matroids
(where $H$ is a finite group).
If we knew that these classes were efficiently pigeonhole, then this would follow
immediately from \Cref{lounge}, but we only know that the classes of
$3$\dash connected \hgg\ (or bicircular) matroids are efficiently pigeonhole.
In this section, we prove that this is sufficient to extend Hlin\v{e}n\'{y}'s Theorem
to the entire classes (not only the $3$\dash connected members).
Because our arguments here do not depend on the nature of
bicircular or \hgg\ matroids, we operate at a greater level
of generality.

Let $M_{1}$ and $M_{2}$ be matroids on the ground sets
$E_{1}$ and $E_{2}$.
Assume that $E_{1}\cap E_{2}=\{e\}$, where $e$ is neither a loop
nor a coloop in $M_{1}$ or in $M_{2}$.
The \emph{parallel connection}, $P(M_{1},M_{2})$, 
along the \emph{basepoint} $e$, has $E_{1}\cup E_{2}$ as its
ground set.
Let $\mcal{C}_{i}$ be the family of circuits of $M_{i}$ for $i=1,2$.
The family of circuits of $P(M_{1},M_{2})$ is
\[
\mcal{C}_{1}\cup \mcal{C}_{2}\cup \{ (C_{1}-e)\cup (C_{2}-e)\colon
C_{1}\in\mcal{C}_{1},\ C_{2}\in\mcal{C}_{2},\ e\in C_{1}\cap C_{2}\}.
\]
Note that $P(M_{1},M_{2})|E_{i}=M_{i}$, for $i=1,2$.
The \emph{$2$\dash sum} of $M_{1}$ and $M_{2}$, written $M_{1}\oplus_{2} M_{2}$,
is defined to be $P(M_{1},M_{2})\ba e$.

Let $T$ be a tree, where each node, $x$, is labelled with a matroid, $M_{x}$.
Let the edges of $T$ be labelled with distinct elements, $e_{1},\ldots, e_{m}$.
Let $x$ and $y$ be distinct nodes.
We insist that if $x$ and $y$ are not adjacent, then
$E(M_{x})$ and $E(M_{y})$ are disjoint.
If $x$ and $y$ are joined by the edge $e_{i}$, then
$E(M_{x})\cap E(M_{y})=\{e_{i}\}$, where $e_{i}$ is neither a loop nor a
coloop in $M_{x}$ or $M_{y}$.
Such a tree describes a matroid, as we now show.
If $e_{i}$ is an edge joining $x$ to $y$, then contract $e_{i}$ from $T$,
and label the resulting identified node with $P(M_{x}, M_{y})$.
Repeat this procedure until there is only one node remaining.
We use $P(T)$ to denote the matroid labelling this one node.
It is an easy exercise to see that $P(T)$ is well-defined, so that it
does not depend on the order in which we contract the edges of $T$.
We define $\oplus_{2}(T)$ to be $P(T)\ba \{e_{1},\ldots, e_{m}\}$.
If $M$ is a connected matroid, there exists a (not necessarily unique)
tree $T$ satisfying $M=\oplus_{2}(T)$ where every node of the tree is
labelled with a $3$\dash connected matroid.

\begin{definition}
\label{toffee}
Let $\Delta$ be a succinct representation of \mcal{M}, a
class of matroids.
We say that $\Delta$ is \emph{minor-compatible}
if there is a polynomial-time algorithm which will accept
any tuple $(\Delta(M),X,Y)$ when $M\in\mcal{M}$ and $X$ and
$Y$ are disjoint subsets of $E(M)$, and return
a string of the form $\Delta(M/X\ba Y)$.
\end{definition}

It is clear that representating graphic matroids with graphs
or representable matroids with matrices gives us examples of
minor-compatible succinct representations.

\begin{theorem}
\label{kibble}
Let \mcal{M} be a minor-closed class of matroids with a
minor-compatible representation, $\Delta$.
Assume that $\{M\in \mcal{M}\colon M\ \text{is $3$-connected}\}$
is efficiently pigeonhole.
There is a fixed-parameter tractable algorithm (with respect to the parameter
of branch-width) which accepts as input any $\Delta(M)$ when $M\in \mcal{M}$
and returns a parse tree for $M$.
\end{theorem}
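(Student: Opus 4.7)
The plan is to reduce to the $3$-connected case via the canonical $2$-sum decomposition, apply \Cref{jibber} to each $3$-connected piece, and then glue the resulting parse trees together. First, use standard matroid connectivity algorithms (e.g.\ Cunningham--Edmonds) to compute the decomposition tree $T$ whose nodes carry $3$-connected matroids $M_{x}$ and whose edges are labelled by basepoints $e_{1},\ldots,e_{m}$, with $\oplus_{2}(T)=M$. Each $M_{x}$ is essentially a minor of $M$ (with basepoints added as parallel elements), so minor-compatibility of $\Delta$ yields succinct representations $\Delta(M_{x})$ in polynomial time. The standard fact that $\bw(M_{x})\leq \bw(M)+O(1)$ ensures the bound $\bw(M)\leq\lambda$ passes uniformly to every piece. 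If $M$ is disconnected, perform the decomposition on each component and later join the components at a single root vertex.

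Since the $3$-connected matroids in $\mcal{M}$ form an efficiently pigeonhole class, apply \Cref{jibber} to each $M_{x}$ to obtain a parse tree $(T_{x},\sigma_{x})$ relative to an $\{i\}$-ary automaton $A_{x}$, in fixed-parameter tractable time. After expanding to a universal alphabet and a universal state space $Q$ whose size is bounded by the pigeonhole function evaluated at $\lambda$, we may assume every $A_{x}$ shares the same alphabet and state space, differing only in their transition tables, which can be folded into the vertex labels $\sigma_{x}$.

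The central step is to glue the parse trees along $T$. For each edge $e_{i}$ of $T$ joining nodes $x$ and $y$, let $\ell_{x}$ and $\ell_{y}$ be the leaves of $T_{x}$ and $T_{y}$ labelled by $e_{i}$, attached to internal vertices $u_{x}$ and $u_{y}$ respectively. Delete $\ell_{x}$ and $\ell_{y}$ and add an edge $u_{x}u_{y}$, restoring each endpoint to degree three. Carrying this surgery out for every edge of $T$, then subdividing an arbitrary edge to attach a root, yields a subcubic tree $T_{M}$ whose leaves are exactly $E(M)$. I would then design a single automaton $A$ which, at each seam vertex $u_{x}$, interprets its current state as recording the $\sim$-class in $M_{x}$ of the subset of $E(M_{x})-e_{i}$ processed so far (retaining the behaviour with respect to $e_{i}$); at the seam, the transition rule combines the states from the two sides into a class for the parallel connection $P(M_{x},M_{y})$, which then translates across $e_{i}$ to a class for $M_{x}\oplus_{2}M_{y}$. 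The labelling $\sigma_{M}$ is $\sigma_{x}$ on each surviving subtree together with a flag at each seam vertex telling $A$ to apply the seam rule. Since the universal alphabet and state space have size bounded in $\lambda$, a single automaton handles every seam uniformly, and the whole construction runs in fixed-parameter tractable time.

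The main obstacle will be verifying the gluing step: one must prove, by an induction mirroring \Cref{pocket}, that the state propagated across each seam corresponds to the genuine $\sim_{U}$ class of $M$ on the set $U$ displayed in $T_{M}$, not merely to classes inside the pieces $M_{x}$. This requires that each parse tree $(T_{x},\sigma_{x})$ encodes full information about $M_{x}$ with the basepoint treated as a bona fide element, so that the state at $u_{x}$ retains enough information about $e_{i}$ to combine correctly with whatever arrives from the other side. Once this propagation lemma is in place, running $A$ on the glued tree decides independence in $M$ as required, and the theorem follows.
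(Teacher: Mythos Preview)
Your high-level architecture matches the paper's: decompose $M$ into a tree $T$ of $3$-connected pieces via $2$-sums, build a parse-tree-like structure for each piece using the efficiently-pigeonhole hypothesis, glue along basepoints, and handle connected components separately at the end. The paper does exactly this.

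Where your proposal and the paper diverge is in the use of \Cref{jibber} as a black box. The parse tree produced by \Cref{jibber} is \emph{oriented}: the label $\sigma_{M}(u)$ at a non-leaf vertex $u$ is a function whose inputs are $(\approx_{U_{e_{L}}})$-classes for the two child edges and whose output is an $(\approx_{U_{e}})$-class for the parent edge. These labels are correct only for the particular root chosen inside \Cref{jibber}, and that root is placed by subdividing an arbitrary edge. When you glue the trees $\{T_{x}\}$ along basepoints and then ``subdivide an arbitrary edge to attach a root'', the direction of flow in $T_{M}$ will in general disagree with the directions baked into the labels $\sigma_{x}$; the labels on one side of every seam point the wrong way. (There is also a minor bookkeeping issue: each $T_{x}$ from \Cref{jibber} already carries a degree-two root, so after gluing you have many degree-two vertices, not a subcubic tree.) The paper avoids this by \emph{not} invoking \Cref{jibber} on the pieces: it first roots $T$ at a node $t$, then for each non-root node $x$ takes only a branch-decomposition $T_{x}$ of $M_{x}$ via \Cref{lychee}, orients $T_{x}$ toward the leaf $\varphi_{x}(e_{\alpha})$ corresponding to the parent basepoint, and only \emph{after} gluing all the $T_{x}$ into $T_{M}$ does it construct representatives $\Rep_{e}(q)$ and labels $\sigma_{M}$, edge by edge, in the single global orientation toward $t$. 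Your approach is repairable---one can re-run the label-construction step of \Cref{jibber} with the root pinned at the parent basepoint---but as written it does not go through.

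On the correctness side, what you call the ``propagation lemma'' is indeed where the real work lies, and it is more delicate than a direct analogue of \Cref{pocket}. The paper introduces an auxiliary notion $\closure{Y_{i}}_{x}\subseteq E(M_{x})$, which augments $Y_{i}\cap E(M_{x})$ by those basepoints $e_{\alpha}$ that are spanned, in the neighbouring piece $M_{y}$, by the recursively-defined set $\closure{Y_{i}}_{y}$. The invariant maintained is that the state at a vertex encodes the $\sim_{U_{e}}$-class of $\closure{Y_{i}}_{x}\cap U_{e}$ in $M_{x}$, \emph{not} the $\sim$-class of $Y_{i}\cap\desc(e)$ in $M$; correctness in $M$ is then recovered via a separate circuit-chasing lemma (\Cref{rabbit}) that relates dependence of $\closure{Y_{i}}_{y}\cap(U_{e_{L}}\cup U_{e_{R}})$ in $M_{y}$ to dependence of $Y_{i}\cap(\desc(e_{L})\cup\desc(e_{R}))$ in $M$ via the circuit structure of parallel connections. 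Your sketch does not anticipate the need for this intermediate object or for the circuit-level argument linking the pieces to $M$.
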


\begin{remark}
\label{cyborg}
\Cref{jibber,kibble} are independent of each other, as we now discuss.
Since any subclass of an efficiently pigeonhole class is
efficiently pigeonhole, we can easily construct an
efficiently pigeonhole class that is not minor-closed, and this
class will therefore not be covered by \Cref{kibble}.
On the other hand, we can construct a minor-closed class \mcal{M}
such that $\{M\in \mcal{M}\colon M\ \text{is $3$-connected}\}$ is
efficiently pigeonhole, and yet \mcal{M} is not even strongly
pigeonhole.
Such a class will be covered by \Cref{kibble}, but not by \Cref{jibber}.
For an example, let $n\geq 3$ be an integer, and let
$U_{2,n}^{+}$ be the rank\dash $2$ matroid obtained from
$U_{2,n}$ by replacing each element with a parallel pair.
If $U$ contains exactly one element from each parallel pair,
then it is $3$\dash separating, and yet $\sim_{U}$ has
at least $n$ equivalence classes.
So if \mcal{M} is the smallest minor-closed class containing
$\{U_{2,n}^{+}\colon n\geq 3\}$, then \mcal{M} is not strongly pigeonhole.
However, every $3$\dash connected member of \mcal{M}
is uniform.
It is therefore not difficult to show that
$\{M\in \mcal{M}\colon M\ \text{is $3$-connected}\}$
is efficiently pigeonhole with respect to any minor-compatible
representation.
(See the proof of \cite[Proposition 3.5]{FMN-II}.)
\end{remark}

\begin{proof}[Proof of \textup{\Cref{kibble}}.]
The ideas here are very similar to those in the proof of \Cref{jibber}, but
there are several technical complications introduced by the fact that
we have to deal with non-$3$\dash connected matroids
as a separate case.
Let $M\in \mcal{M}$ be a matroid with ground set $E$ and
branch-width $\lambda$.
We assume that we are given the description $\Delta(M)$.
The algorithm we describe in this proof runs in polynomial-time,
and to ensure that it is fixed-parameter tractable with respect to $\lambda$,
we will be careful to observe that whenever we call upon a polynomial-time
subroutine, $\lambda$ does not appear in the exponent of the running time.

To start, we consider the case that $M$ is connected, and at the end
of the proof we will show that this is sufficient to establish the entire
\namecref{kibble}.

Discussion in \cite{BC95} shows that we can use a `shifting' algorithm
to find a $2$-separation of $M$, if it exists.
This takes $O(|E|^{3})$ oracle calls.
Therefore we can test whether $M$ is $3$-connected in polynomial time.
If $M$ is $3$\dash connected, then we use \Cref{jibber} to construct a parse tree for $M$.
So henceforth we assume that $M$ is connected but not $3$\dash connected.

\textbf{Constructing $T_{M}$.}
We have noted that it takes $O(|E|^{3})$ oracle calls to find a $2$-separation of $M$.
Assume that $(U_{1},U_{2})$ is such a separation.
Then $M$ can be expressed as the $2$-sum of matroids $M_{1}$ and
$M_{2}$, where the ground set of $M_{i}$ is $U_{i}\cup e$, and $e$ is an element
not in $E$.
Both $M_{1}$ and $M_{2}$ are isomorphic to minors of $M$, and hence are in \mcal{M}.
If $B_{1}$ is a basis of $M|U_{1}$, and $B$ is a basis of $M$ containing
$B_{1}$, then $B\cap U_{2}$ does not span $U_{2}$, so we can let $x$ be an element
in $U_{2}-\cl_{M}(B\cap U_{2})$.
Now $M_{1}$ can be produced from $M$ by contracting $B\cap U_{2}$ and deleting
all elements in $U_{2}-(B\cup x)$.
We then relabel $x$ as $e$.
From this discussion, and the fact that $\Delta$ is minor-compatible,
it follows that we can construct $\Delta(M_{1})$ and $\Delta(M_{2})$ in
polynomial time.
By reiterating this procedure, we can construct a labelled tree, $T'$, such that
$M=\oplus_{2}(T')$.
Each node, $x$, of $T'$ is labelled by a $3$\dash connected matroid, $M_{x}$,
with at least three elements, and for each such node we have an
associated string $\Delta(M_{x})$.
Let the edge labels of $T'$ be $e_{1},\ldots, e_{m}$.
We arbitrarily choose to subdivide $e_{m}$ to make a root vertex.
Say that $e_{m}$ joins $x_{L}$ to $x_{R}$ in $T'$.
We delete $e_{m}$, add a new node, $t$, and edges
$e_{m,L}$ and $e_{m,R}$ joining $t$ to $x_{L}$ and $x_{R}$.
At the same time, we relabel $e_{m}$ as $e_{m,L}$ in $M_{x_{L}}$ and
as $e_{m,R}$ in $M_{x_{R}}$.
Let $T$ be the tree that we obtain in this way.
We think of $t$ as being the \emph{root} of $T$.
We associate $t$ with the matroid $M_{t}$, which is a copy of
$U_{1,2}$ with ground set $\{e_{m,L},e_{m,R}\}$.
Note that $\oplus_{2}(T)=\oplus_{2}(T')=M$.

For each non-root node, $x$, of $T$, the labelling matroid
$M_{x}$ is isomorphic to a minor of $M$.
Therefore $\bw(M_{x}) \leq \lambda$ \cite[Proposition 14.2.3]{Oxl11}.
We use \Cref{lychee} to construct a branch-decomposition of $M_{x}$
with width at most $3\lambda+1$.
Let $T_{x}$ be the tree underlying the branch-decomposition of $M_{x}$,
and let $\varphi_{x}$ be the bijection from $E(M_{x})$ to the leaves of $T_{x}$.
We define the tree $T_{t}$ to be a path of two edges, and we define
$\varphi_{t}$ so that it applies the labels $e_{m,L}$ and $e_{m,R}$ to the leaves
and $t$ to the middle vertex.
We say that $\varphi_{t}(e_{m,L})$ is the \emph{left child}
of $t$ and $\varphi_{t}(e_{m,R})$ is the \emph{right child}.

Let $x$ be a non-root node in $T$ and
consider the path in $T$ from $x$ to $t$.
Let $e_{\alpha}$ be the first edge in this path, so that $e_{\alpha}$ is a basepoint in the
ground set of $M_{x}$.
Then we say that $e_{\alpha}$ is the \emph{parent basepoint} of $T_{x}$.
For each internal vertex, $u$, of $T_{x}$, note that $u$ is adjacent to
two vertices that are not in the path from $u$ to $\varphi_{x}(e_{\alpha})$,
where $e_{\alpha}$ is the parent basepoint of $T_{x}$.
We say that these two vertices are the \emph{children} of $u$,
and we make an arbitrary distinction between the
\emph{left child} and the \emph{right child}.

The collection $\cup \{T_{x}\}$, where $x$ ranges over all nodes
of $T$, forms a forest that we now assemble into a single tree, $T_{M}$.
For each edge, $e_{\alpha}$, in $\{e_{1},\ldots, e_{m-1},e_{m,L}, e_{m,R}\}$,
we perform the following operation.
Let the node $x$ of $T$ be chosen so that
$e_{\alpha}$ is the parent basepoint of $T_{x}$,
and let $y$ be the other end-vertex of $e_{\alpha}$ in $T$.
Let $u$ be the vertex of $T_{x}$ that is adjacent to the leaf $\varphi_{x}(e_{\alpha})$.
We delete $\varphi_{x}(e_{\alpha})$ from $T_{x}$ and then identify $u$ with the
leaf $\varphi_{y}(e_{\alpha})$ in $T_{y}$.
We say that the edge in $T_{y}$ that is now incident with $u$ is a
\emph{basepoint} edge in $T_{M}$.
If $u$ is a non-leaf vertex of $T_{x}$, we allow $u$ to carry its
children over from $T_{x}$ to $T_{M}$.
In the case that a child of $u$ in $T_{x}$ represents a basepoint element,
$e_{\alpha}$, then that child of $u$ in $T_{M}$ will be an internal vertex
of another tree, $T_{y}$.
Now $T_{M}$ is a rooted tree where every non-leaf vertex has a left child and a
right child.
\Cref{fig6} illustrates this construction by showing the tree $T'$, along with the
collection of decompositions $\cup \{T_{x}\}$.
In these diagrams, the basepoints of the parallel connections are coded via
colour.
In \Cref{fig7}, we have assembled these trees together into the tree $T_{M}$.

\begin{figure}[htb]
\centering
\includegraphics[scale=1.1]{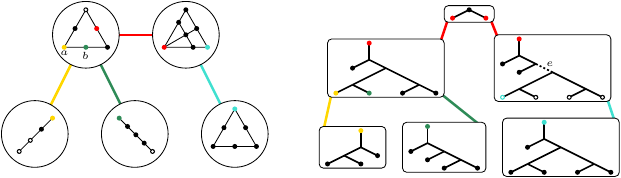}
\caption{The decomposition tree, $T'$, and the decompositions $\cup \{T_{x}\}$.}
\label{fig6}
\end{figure}

\begin{figure}[htb]
\centering
\includegraphics[scale=1.1]{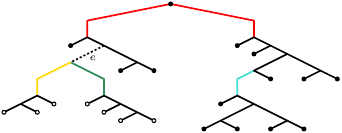}
\caption{The tree $T_{M}$.}
\label{fig7}
\end{figure}

Every edge of $T_{M}$ is an edge of exactly one tree $T_{x}$, where $x$ is a node of $T$.
Our method of construction means that if $u$ is a non-leaf vertex of $T_{M}$, then both
the edges joining $u$ to its children are edges of the same tree $T_{x}$.
Moreover, if $x$ is a non-root node of $T$, then the only edge of $T_{x}$ not contained
in $T_{M}$ is the one incident with $\varphi_{x}(e_{\alpha})$, where $e_{\alpha}$ is the
parent basepoint of $T_{x}$.
Let $e$ be any edge of $T_{M}$, and let $u$ be the end-vertex
of $e$ that is further away from $t$ in $T_{M}$.
Then we say that $u$ is the \emph{bottom} vertex of $e$.

Note that there is a bijection, $\varphi_{M}$, from $E$ to the
leaves of $T_{M}$.
In particular, $\varphi_{M}$ restricted to $E(M_{x})\cap E$
is equal to $\varphi_{x}$ restricted to the same set, for any node $x$.
It is easy to check that if the set $U$ is displayed by an edge of $T_{M}$, then
$\lambda_{M}(U)\leq 3\lambda$.
This is obvious when $U$ is a subset of $E(M_{x})$, where $x$ is a non-root
node of $T$, for then $U$ is also displayed by the tree $T_{x}$.
It is only a little more difficult to verify when $U$ is not contained in
$E(M_{x})$ for any $x$.

\textbf{Defining three pieces of notation.}
Next we describe three related notations for subsets of $E$ and $E(M_{x})$.
Let $e$ be any edge of $T_{M}$, and let $\desc(e)$ be the set of
elements $z\in E$ such that the path in $T_{M}$
from $\varphi_{M}(z)$ to $t$ passes through $e$.
In \Cref{fig7}, when $e$ is the dashed edge, the set $\desc(e)$ is indicated
by the hollow vertices.
Note that $\desc(e)$ is not necessarily contained in $E(M_{x})$ for any node
$x$ of $T$, but that it is contained in $E$.

Next, we let $e$ be any edge of $T_{M}$ and let $x$ be the node of $T$
such that $e$ is an edge of the tree $T_{x}$.
If $x$ is a non-root node of $T$, then we can
assume that $e_{\alpha}$ is the parent basepoint of $T_{x}$.
We let $U_{e}$ be the set of elements $z\in E(M_{x})$ such
that the path in $T_{x}$ from $\varphi_{x}(z)$ to $\varphi_{x}(e_{\alpha})$ contains $e$.
If $x$ is the root $t$, and $e$ joins $t$ to $\varphi_{t}(e_{m,L})$ then
we define $U_{e}$ to be $\{e_{m,L}\}$, and if $e$ joins $t$ to
$\varphi_{t}(e_{m,R})$, then we define $U_{e}$ to be $\{e_{m,R}\}$.
In \Cref{fig6}, the righthand diagram contains a dashed edge $e$, and the
set $U_{e}$ is indicated by hollow vertices.
Note that $U_{e}$ is a subset of $E(M_{x})$, and unlike $\desc(e)$, the set $U_{e}$
may not be contained in $E$, as it may contain basepoint elements.
As $U_{e}$ is displayed by the edge $e$ in $T_{x}$, we have that
$\lambda_{M_{x}}(U_{e})\leq 3\lambda$.

Finally, we let $Y_{i}$ be any subset of $E$, and we let $x$ be a node of $T$.
We recursively describe a subset, $\closure{Y_{i}}_{x}\subseteq E(M_{x})$.
First, assume that $x$ is a leaf of $T$.
Then $\closure{Y_{i}}_{x}$ is simply $Y_{i}\cap E(M_{x})$.
Now we assume that $x$ is not a leaf.
Let $e_{\alpha_{1}},\ldots, e_{\alpha_{s}}$ be the labels
of edges in $T$ that are incident with $x$
but not on the path from $x$ to $t$.
Now define $\closure{Y_{i}}_{x}$ so that it contains
$Y_{i} \cap E(M_{x})$, along with any basepoint
$e_{\alpha_{j}}$ such that if
$y$ labels the other node incident with $e_{\alpha_{j}}$, then
$e_{\alpha_{j}}\in \cl_{M_{y}}(\closure{Y_{i}}_{y})$.
Note that this means that any element of $\closure{Y_{i}}_{x}$ is
either contained in $Y_{i}\cap E(M_{x})$, or is a basepoint element.
In any case, every element of $\closure{Y_{i}}_{x}$ is in $E(M_{x})$.
In \Cref{fig6}, we let $x$ be the top-left node in the lefthand
diagram, and we let $Y_{i}$ be the set indicated by the hollow vertices.
Then $\closure{Y_{i}}_{x}$ contains the single hollow vertex in $E(M_{x})$,
as well as the element $a$, but \emph{not} the element $b$.
Note that by construction, 
$(Y_{i}\cap E(M_{x}))\subseteq \closure{Y_{i}}_{x}\subseteq E(M_{x})$.

With these definitions established, we can proceed.

\textbf{Constructing representative subsets.}
Let $e$ be any edge of $T_{M}$, and let $U_{e}\subseteq E(M_{x})$ be as defined above.
Recall that $\lambda_{M_{x}}(U_{e})\leq 3\lambda$.
Noting that $\{M\in \mcal{M}\colon M\ \text{is $3$-connected}\}$ is
efficiently pigeonhole and $M_{x}$ is $3$-connected, we refer to
\Cref{yakuza}, and we let $\pi$ be the function from that \namecref{yakuza}.
Let $K$ be $\pi(3\lambda)$, and note that $K$ is constant with respect to the size of $E$.
Let $\approx_{U_{e}}$ be the equivalence relation from \Cref{yakuza}.
Then we can decide whether two subsets of $U_{e}$
are equivalent under $\approx_{U_{e}}$ in time bounded by
$O(K|E|^{c})$.
Furthermore, $\approx_{U_{e}}$ has at most $K$ equivalence classes.

Note that $T_{M}$ has exactly $2|E|-2$ edges.
For each such edge, $e$, we will construct, in polynomial time, a set
of representatives such that each representative is a subset of $U_{e}$.
Each representative will be an independent subset of $U_{e}$, and distinct
representatives will represent different $(\approx_{U_{e}})$\dash classes.
We will apply the labels $q_{1},\ldots, q_{K}$ to representative
subsets, and use $\Rep_{e}(q)$ to denote the representative with
label $q$, assuming that it exists.
We do not claim that our set of representatives is complete,
so there may be $(\approx_{U_{e}})$\dash classes that do not have a
representative.

Let $x$ be the node of $T$ such that $e$ is an edge of $T_{x}$.
If $x\ne t$, then $T_{x}$ has a parent basepoint, $e_{\alpha}$, and
we let $u$ be the end-vertex of $e$ that is further from
$\varphi_{x}(e_{\alpha})$ in $T_{x}$.
If $x=t$, then let $u$ be the end-vertex of $e$ that is not the root.

First assume that $u$ is a leaf in $T_{x}$.
Then $U_{e}=\{\varphi_{x}^{-1}(u)\}$.
In this case, we choose $\emptyset$ as a representative,
and apply the label $q_{1}$ to it, so that $\emptyset=\Rep_{e}(q_{1})$.
Because \mcal{M} has a succinct representation, we can check in
polynomial time whether $U_{e}$ is dependent.
If so, we take no further action, so assume that $U_{e}$ is independent in $M_{x}$.
In polynomial time we can check whether
$U_{e}\approx_{U_{e}} \emptyset$ holds.
If so, then we are done.
If $U_{e}\not\approx_{U_{e}} \emptyset$, then
we choose $U_{e}$ as the representative with label $q_{2}$, so that
$U_{e}=\Rep_{e}(q_{2})$.

Now we assume that $u$ is not a leaf of $T_{x}$.
Let $e_{L}$ and $e_{R}$ be the edges joining $u$ to its
children in $T_{x}$.
Recursively, we assume that we have chosen a representative
subset $\Rep_{e_{L}}(q)\subseteq U_{e_{L}}$ whenever
$q$ is in $\{q_{1},\ldots, q_{s_{L}}\}$, and that
$\Rep_{e_{R}}(q)$ is defined when $q$ is in $\{q_{1},\ldots, q_{s_{R}}\}$.
For each pair $(q_{j},q_{k})$ in
$\{q_{1},\ldots, q_{s_{L}}\}\times \{q_{1},\ldots, q_{s_{R}}\}$,
we let $X_{j}$ stand for $\Rep_{e_{L}}(q_{j})$ and
$X_{k}$ stand for $\Rep_{e_{R}}(q_{k})$.
If $X_{j}\cup X_{k}$ is dependent in $M_{x}$, then we
move to the next pair.
Assuming that $X_{j}\cup X_{k}$ is independent,
we check in polynomial time whether
$X_{j}\cup X_{k}$ is equivalent under $\approx_{U_{e}}$ to
any of the representative subsets of $U_{e}$ that we have
already constructed.
If so, we are done.
If not, then we let $q_{l}$ be the first label in
$\{q_{1},\ldots, q_{K}\}$ not already
assigned to a subset of $U_{e}$, and we define
$\Rep_{e}(q_{l})$ to be $X_{j}\cup X_{k}$.

\textbf{Labelling the vertices.}
The alphabet of our automaton is going to contain a set of functions.
Our next job is show how we apply, in polynomial time,
functions in the alphabet to the vertices of $T_{M}$.
Let $u$ be a vertex of $T_{M}$, and assume that $u$ is the bottom
vertex of the edge $e$.
Let $x$ be the node of $T$ such that $e$ is an edge in the tree $T_{x}$.

First, we assume that $u$ is a leaf of $T_{M}$.
Then $U_{e}=\varphi_{M}^{-1}(u)$.
We label $u$ with a function, $f$, with the domain $\{0,1\}$.
Set $f(0)$ to be $q_{1}$, recalling that
$\emptyset$ is the representative $\Rep_{e}(q_{1})$.
If $U_{e}$ is dependent in $M_{x}$, then we set
$f(1)$ to be the symbol \dep.
Assume that $U_{e}$ is independent.
If $U_{e}\approx_{U_{e}} \emptyset$, then we set $f(1)$ to be $q_{1}$.
Otherwise we set $f(1)$ to be $q_{2}$, recalling that in this case
$U_{e}=\Rep_{e}(q_{2})$.
Henceforth we assume that $u$ is not a leaf of $T_{M}$.
Let $e_{L}$ and $e_{R}$ be the edges joining $u$ to its
children in $T_{M}$.

Assume that $e$ is not a basepoint edge.
This implies that $U_{e}$ is the disjoint union of $U_{e_{L}}$
and $U_{e_{R}}$.
(If $e$ were a basepoint edge, then $U_{e}$ would be a singleton
subset of $E(M_{x})$, whereas
$U_{e_{L}}$ and $U_{e_{R}}$ would be
subsets of $E(M_{y})$ for some other node $y$ of $T$.)
Assume that $\Rep_{e_{L}}(q)$ is defined when $q$ is
in $\{q_{1},\ldots, q_{s_{L}}\}$, and that
$\Rep_{e_{R}}(q)$ is defined when $q$ is in
$\{q_{1},\ldots, q_{s_{R}}\}$.
In this case, we label $u$ with a function, $f$, having
$\{\dep,q_{1},\ldots, q_{s_{L}}\}\times \{\dep,q_{1},\ldots, q_{s_{R}}\}$
as its domain.
We define the output of $f$ to be \dep\ on any input that includes
the symbol \dep.
Now assume that $X_{j}$ is $\Rep_{e_{L}}(q_{j})$ and $X_{k}$ is
$\Rep_{e_{R}}(q_{k})$, for some $1\leq j\leq s_{L}$ and some
$1\leq k\leq s_{R}$.
If $X_{j}\cup X_{k}$ is dependent in $M_{x}$, then define
$f(q_{j},q_{k})$ to be \dep.
Otherwise, $X_{j}\cup X_{k}$ is equivalent under $\approx_{e}$ to
some representative subset of $U_{e}$.
We find this representative, say $\Rep_{e}(q_{l})$, in polynomial
time, and we set $f(q_{j},q_{k})$ to be $q_{l}$.

Now we assume that $e$ is a basepoint edge.
Assume that in $T_{x}$, $e$ is incident with the leaf $\varphi_{x}(e_{\alpha})$,
where $e_{\alpha}$ is in $\{e_{1},\ldots, e_{m-1},e_{m,L},e_{m,R}\}$.
Therefore $U_{e}=\{e_{\alpha}\}$.
Note that $e_{L}$ and $e_{R}$ are edges of $T_{y}$, where
$y$ is the node of $T$ joined to $x$ by $e_{\alpha}$.
Assume that $\Rep_{e_{L}}(q)$ has been chosen when
$q\in \{q_{1},\ldots, q_{s_{L}}\}$, and
$\Rep_{e_{R}}(q)$ is defined when
$q\in \{q_{1},\ldots, q_{s_{R}}\}$.
We will apply to $u$ a function, $f$, whose domain is again
$\{\dep,q_{1},\ldots, q_{s_{L}}\}\times \{\dep,q_{1},\ldots, q_{s_{R}}\}$,
and whose codomain is $\{\dep,q_{1},q_{2}\}$.
The output of $f$ is \dep\ on any input including \dep.
Consider the input $(q_{j},q_{k})$.
Let $X_{j}$ and $X_{k}$ be $\Rep_{e_{L}}(q_{j})$ and
$\Rep_{e_{R}}(q_{k})$ respectively.
If $X_{j}\cup X_{k}$ is dependent, then we set
$f(q_{j},q_{k})$ to be \dep.
Now we assume that $X_{j}\cup X_{k}$ is independent.
If $X_{j}\cup X_{k}\cup \{e_{\alpha}\}$ is independent in $M_{y}$ then
we set $f(q_{j},q_{k})$ to be $q_{1}$.
Otherwise, $X_{j}\cup X_{k}\cup \{e_{\alpha}\}$ is dependent in $M_{y}$,
and we set $f(q_{j},q_{k})$ to be
$q_{1}$ if $U_{e}\approx_{U_{e}}\emptyset$ holds, and
$q_{2}$ if $U_{e}\not\approx_{U_{e}}\emptyset$.

Finally, the root $t$ is labelled with a function, $f$,
that takes $\{\dep,q_{1},q_{2}\}^{2}$ as input.
Any ordered pair that contains \dep\ produces \dep\ as output.
Similarly, $f(q_{2},q_{2})=\dep$.
Any other ordered pair produces the symbol \indep\ as output.

Now we have described the function that we apply to each vertex of $T_{M}$.
Let $\sigma_{M}$ be the labelling that applies these functions.
Thus $(T_{M},\sigma_{M})$ is a $\Sigma$\dash tree,
where $\Sigma$ contains functions whose domain is
either $\{0,1\}$ or sets of the form
$\{\dep,q_{1},\ldots, q_{s_{L}}\}\times \{\dep,q_{1},\ldots, q_{s_{R}}\}$,
and whose codomain is
$\{\dep, \indep,q_{1},\ldots, q_{K}\}$.

\textbf{Constructing the automaton.}
Now that we have shown how to efficiently construct the
$\Sigma$\dash tree $(T_{M},\sigma_{M})$, it is time to
consider the workings of the automaton, $A$.
The state space, $Q$, of $A$ is the set
$\{\dep, \indep,q_{1},\ldots, q_{K}\}$.
The alphabet is $\Sigma\cup \Sigma\times \{0,1\}^{\{i\}}$,
where $\Sigma$ is the set of functions into $Q$ that we have
previously described.
The only accepting state is \indep.
The transition rule, $\delta_{0}$, acts as follows.
If $f$ is a function from $\{0,1\}$ into $Q$, and
$s$ is a function in $\{0,1\}^{\{i\}}$, then $\delta_{0}(f,s)=\{f(s(i))\}$.
Similarly, $\delta_{2}$ is defined so that
if $f$ is a function in $\Sigma$, and
$(\alpha,\beta)$ is in the domain of $f$, then
$\delta_{2}(f,\alpha,\beta)=\{f(\alpha,\beta)\}$.
This completes the description of $A$.
Note that it is a deterministic automaton.

\textbf{Proof of correctness.}
We must now prove that $(T_{M},\sigma_{M})$ truly is a
parse tree relative to the automaton $A$.
That is, we must prove that $A$ accepts a subset of the
leaves of $T_{M}$ if and only if the corresponding set
is independent in $M$.

\begin{lemma}
\label{rabbit}
Let $Y_{i}$ be a subset of $E$, and let $u$ be a non-leaf
vertex of $T_{M}$.
Let $e_{L}$ and $e_{R}$ be the edges of $T_{M}$ joining $u$
to its children.
Let $y$ be the node of $T$ such that $e_{L}$ and $e_{R}$ are
edges of $T_{y}$.
\begin{enumerate}[label=\textup{(\roman*)}]
\item If $\closure{Y_{i}}_{y}\cap (U_{e_{L}}\cup U_{e_{R}})$
is dependent in $M_{y}$, then
$Y_{i}\cap (\desc(e_{L})\cup \desc(e_{R}))$ is dependent in $M$.
\item If $Y_{i}\cap \desc(e_{L})$ and $Y_{i}\cap \desc(e_{R})$ are
independent in $M$, but
$Y_{i}\cap (\desc(e_{L})\cup \desc(e_{R}))$ is dependent,
then $\closure{Y_{i}}_{y}\cap (U_{e_{L}}\cup U_{e_{R}})$
is dependent in $M_{y}$.
\end{enumerate}
\end{lemma}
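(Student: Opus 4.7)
The plan is to treat both parts of the \namecref{rabbit} as consequences of a single structural correspondence between dependencies in the global matroid $M = \oplus_{2}(T)$ and dependencies in the local pieces $M_{z}$, where the closure operator $\closure{Y_{i}}_{z}$ is precisely the right ``interface'' between the two views. The central tool is the description of circuits in a 2-sum (equivalently, in a parallel connection): a circuit of $P(M_{1},M_{2})$ on basepoint $e$ is either a circuit of $M_{1}$ or $M_{2}$ avoiding $e$, or a set of the form $(C_{1}-e)\cup(C_{2}-e)$ where each $C_{i}$ is a circuit of $M_{i}$ containing $e$. I would iterate this over the tree $T$ to obtain a circuit description of $\oplus_{2}(T)$.

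First I would prove, by bottom-up induction on the subtrees of $T$ rooted at $t$, the following auxiliary claim: for every node $z$ of $T$ and every basepoint $e_{\alpha}$ joining $z$ to its parent, we have $e_{\alpha}\in \cl_{M_{z}}(\closure{Y_{i}}_{z})$ if and only if $Y_{i}\cap D_{z}$ together with ``a copy of $e_{\alpha}$'' becomes dependent in the relevant parallel-connection subpiece, where $D_{z}\subseteq E$ denotes the ground-set elements whose leaves lie in the subtree of $T_{M}$ below the basepoint edge to $z$. Simultaneously, I would show that $\closure{Y_{i}}_{z}$ is dependent in $M_{z}$ iff $Y_{i}\cap D_{z}$ is dependent in $M$. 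The inductive step is exactly the circuit-of-2-sum description: each basepoint in $\closure{Y_{i}}_{z}$ carries a witnessing circuit $C_{\alpha}\subseteq \closure{Y_{i}}_{z_{\alpha}}\cup\{e_{\alpha}\}$ in the child piece $M_{z_{\alpha}}$, and splicing these in and out along basepoints using the circuit axiom realises the desired circuit in $M$.

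With this auxiliary claim in hand, part (i) is immediate: a circuit $C\subseteq \closure{Y_{i}}_{y}\cap(U_{e_{L}}\cup U_{e_{R}})$ in $M_{y}$ can be expanded, one basepoint at a time, using the witnessing circuits supplied by the inductive hypothesis, to yield a dependent subset of $Y_{i}\cap(\desc(e_{L})\cup\desc(e_{R}))$ in $M$. For part (ii), the hypothesis that $Y_{i}\cap\desc(e_{L})$ and $Y_{i}\cap\desc(e_{R})$ are each independent in $M$ forces any new circuit in $Y_{i}\cap(\desc(e_{L})\cup\desc(e_{R}))$ to span the separation at $y$. Unwinding this circuit through the 2-sum structure, every maximal sub-segment contained in one of the child subtrees beyond $y$ must be a circuit through the corresponding basepoint in that subtree; by the auxiliary claim, the basepoint lies in $\cl_{M_{z_{\alpha}}}(\closure{Y_{i}}_{z_{\alpha}})$ and hence belongs to $\closure{Y_{i}}_{y}$. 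The ``collapsed'' circuit thus lies in $\closure{Y_{i}}_{y}\cap(U_{e_{L}}\cup U_{e_{R}})$ and is dependent in $M_{y}$, which is what we want.

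The main obstacle will be the bookkeeping in the auxiliary claim, in particular the ``if and only if'' involving basepoints. One direction (basepoint spanned $\Rightarrow$ basepoint usable as a dependence in $M$) is the easier circuit-splicing argument; the converse requires arguing that a genuinely new dependence in $M$ cannot hide inside a single child subtree, and this is precisely where the independence hypothesis on $Y_{i}\cap\desc(e_{L})$ and $Y_{i}\cap\desc(e_{R})$ in part (ii) is used. Setting up the induction so that the statement about circuits composes cleanly across adjacent tree nodes — without double-counting basepoints, and while respecting that $\closure{Y_{i}}_{x}$ only looks ``downward'' from $x$ — is the delicate bit.
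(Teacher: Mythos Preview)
Your approach is essentially the same as the paper's: both arguments use the circuit description of parallel connections to splice circuits across basepoints, expanding outward from $M_{y}$ for (i) and collapsing inward toward $M_{y}$ for (ii). The paper carries this out as two explicit iterative constructions (sequences $C_{0},\ldots,C_{s}$ and $C_{s},\ldots,C_{0}$ of circuits of $P(T)$, indexed by an ordering of the relevant nodes of $T$), whereas you package the same content as a bottom-up inductive auxiliary claim; the underlying mechanism is identical.

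One caution: the auxiliary claim as you state it (``$\closure{Y_{i}}_{z}$ is dependent in $M_{z}$ iff $Y_{i}\cap D_{z}$ is dependent in $M$'') is false in the backward direction without further hypotheses, since a circuit of $M$ contained in $Y_{i}\cap D_{z}$ can sit entirely inside a single proper descendant piece and never interact with $M_{z}$. You already see this, since your final paragraph notes that the converse needs the independence hypotheses on $Y_{i}\cap\desc(e_{L})$ and $Y_{i}\cap\desc(e_{R})$; just make sure the auxiliary claim is stated with those side conditions built in, rather than as a clean biconditional, before you try to prove it.
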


\begin{proof}
We start by defining $D$, a set of nodes in $T$.
Let $y'$ be a node in $T$.
If there exists $d \in \desc(e_{L})\cup \desc(e_{R})$ such that
the path in $T_{M}$ from $\varphi_{M}(d)$ to $u$ uses an
edge in the tree $T_{y'}$, then $y'$ is in $D$, and otherwise $y'\notin D$.
If $y'\ne y$ and $d$ is in $\desc(e_{L})$, we say $y'$ is a
\emph{left} vertex, if $d$ is in $\desc(e_{R})$, then $y'$
is a \emph{right} vertex.
We say that $y$ is both a left and a right vertex of $D$, and note that
any vertex in $D-y$ is either left or right, but not both.
Let $y_{0},\ldots, y_{s}$ be an ordering of the vertices in $D$
such that $y_{0}=y$, and whenever $y_{k}$ is on the path from $y_{j}$
to $y$ in $T$, $k\leq j$.

To prove (i), we let $C$ be a circuit of $M_{y}$ contained in
$\closure{Y_{i}}_{y}\cap (U_{e_{L}}\cup U_{e_{R}})$.
We will construct a sequence of circuits, $C_{0},\ldots, C_{s}$,
of $P(T)$ such that:
\begin{enumerate}[label=\textup{(\alph*)}]
\item $C_{j}$ is contained in
\[(\closure{Y_{i}}_{y}\cap (U_{e_{L}}\cup U_{e_{R}}))\cup \bigcup_{z=1}^{j}\closure{Y_{i}}_{y_{s}}\]
for each $j$, and
\item if $0\leq k\leq j$, and $e'$ is a basepoint edge on the path from $y_{k}$ to $y$
in $T$, then $e'\notin C_{j}$.
\end{enumerate}
Assume we succeed in constructing this sequence.
Then $C_{s}$ does not contain any element in
$\{e_{1},\ldots, e_{m-1},e_{m,L},e_{m,R}\}$, so it is a circuit of
$P(T)\ba \{e_{1},\ldots, e_{m-1},e_{m,L},e_{m,R}\}=M$, and is
contained in $Y_{i}\cap (\desc(e_{L})\cup \desc(e_{R}))$.
So at this point the proof of (i) will be complete.

For $C_{0}$, we can just use $C$.
Assume we have constructed $C_{j-1}$.
Let $e_{\alpha}$ be the parent basepoint of $T_{y_{j}}$, so that
$e_{\alpha}$ is the first edge on the path in $T$ from $y_{j}$ to $y$.
Assume that $e_{\alpha}$ joins $y_{j}$ to $y_{k}$, where $k<j$.
This means that $e_{\alpha}$ is in $E(M_{y_{k}})$.
If $e_{\alpha}\notin C_{j-1}$, then we set $C_{j}$ to be $C_{j-1}$ and we are done.
Therefore we assume that $e_{\alpha}$ is in $C_{j-1}$.
Because $C_{j-1}$ is contained in the union of
\[(\closure{Y_{i}}_{y}\cap (U_{e_{L}}\cup U_{e_{R}})\quad\text{with}\quad \closure{Y_{i}}_{y_{1}}\cup\cdots\cup \closure{Y_{i}}_{y_{j-1}}\]
and $e_{\alpha}$ is in $E(M_{y_{k}})$ it follows that 
$e_{\alpha}$ is in $\closure{Y_{i}}_{y_{k}}$.
The definition of $\closure{Y_{i}}_{y_{k}}$ now means that $e_{\alpha}$ is
in $\cl_{M_{y_{j}}}(\closure{Y_{i}}_{y_{j}})$.
Let $C'$ be a circuit of $M_{y_{j}}$ such that
$e_{\alpha}\in C'\subseteq (\closure{Y_{i}}_{y_{j}}\cup e_{\alpha})$.
The definition of the parallel connection means that
$(C_{j-1}-e_{\alpha})\cup (C'-e_{\alpha})$ is a circuit of $P(T)$, so we
set $C_{j}$ to be this circuit.
This shows that we can construct the claimed sequence of circuits, and
completes the proof of (i).

Now we prove (ii).
Assume that $Y_{i}\cap \desc(e_{L})$ and
$Y_{i}\cap \desc(e_{R})$ are independent in $M$, but that
$C$ is a circuit contained in $Y_{i}\cap (\desc(e_{L})\cup \desc(e_{R}))$.
We construct a sequence $C_{s},C_{s-1},\ldots, C_{0}$ of circuits
of $P(T)$ such that:
\begin{enumerate}[label=\textup{(\alph*)}]
\item $C_{j}$ is contained in
\[(\closure{Y_{i}}_{y}\cap (U_{e_{L}}\cup U_{e_{R}}))\cup\bigcup_{z=1}^{j}\closure{Y_{i}}_{y_{k}}\]
for each $j$, and
\item for each $j$, there is a left vertex $y_{L}$ and a right vertex $y_{R}$
such that $C_{j}$ contains elements of both
$\closure{Y_{i}}_{y_{L}}$ and $\closure{Y_{i}}_{y_{R}}$.
\end{enumerate}
Assuming we succeed in constructing this sequence,
$C_{0}$ will certify that $\closure{Y_{i}}_{y}\cap (U_{e_{L}}\cup U_{e_{R}})$
is dependent.

Note that $C$ is contained in neither
$Y_{i}\cap \desc(e_{L})$ nor
$Y_{i}\cap \desc(e_{R})$.
From this it follows that we can take $C_{s}$ to be $C$.
Now assume that we have constructed $C_{j}$.
If $C_{j}$ contains no elements of $\closure{Y_{i}}_{y_{j}}$, then
we set $C_{j-1}$ to be $C_{j}$.
So assume that $C_{j}\cap \closure{Y_{i}}_{y_{j}}\ne \emptyset$.
Let $e_{\alpha}$ be the parent basepoint of $T_{y_{j}}$,
and assume that $e_{\alpha}$ joins $y_{j}$ to $y_{k}$ in $T$,
where $k<j$.
It cannot be the case that $C_{j}$ is a circuit of
$\closure{Y_{i}}_{y_{j}}$, or else condition (b) would be
violated.
Therefore $C_{j}$ can be expressed as
$(C'-e_{\alpha})\cup (C_{j-1}-e_{\alpha})$,
where $C'$ and $C_{j-1}$ are circuits of $P(T)$
containing $e_{\alpha}$, and
$C'$ is a circuit of $M_{y_{j}}$, while
$C_{j-1}$ intersects $E(M_{y_{j}})$ only in $e_{\alpha}$.
Note that the circuit $C'$ implies that 
$e_{\alpha}$ is in $\closure{Y_{i}}_{y_{k}}$.
If $y_{j}$ is a left vertex, then so is $y_{k}$, so $C_{j-1}$ must
also contain an element from $\closure{Y_{i}}_{y_{R}}$, where $y_{R}$ is
some right vertex.
Therefore $C_{j-1}$ is the desired next circuit in the sequence.
The symmetric argument applies when $y_{j}$ and $y_{k}$ are
both right vertices.
\end{proof}

\begin{lemma}
\label{gelato}
Let $Y_{i}$ be a subset of $E$.
Assume that $u$ is the bottom vertex of the edge $e$ in $T_{M}$.
Let $x$ be the node of $T$ such that $e$ is an edge of $T_{x}$.
Let $q$ be the state applied to $u$ by the run of $A$  on
$\enc(T_{M},\sigma_{M},\varphi_{M},\{Y_{i}\})$.
Then $q=\dep$ if and only if $Y_{i}\cap \desc(e)$
is dependent in $M$.
If $Y_{i}\cap \desc(e)$ is independent, then
$q=q_{l}$ for some value $l$, and
$(\closure{Y_{i}}_{x}\cap U_{e}) \sim_{U_{e}} \Rep_{e}(q_{l})$.
\end{lemma}

\begin{proof}
We assume that the \namecref{gelato} fails for the vertex $u$, and
that subject to this constraint, $u$ has been chosen so that it is as
far away from $t$ as is possible in $T_{M}$.
Let $f$ be the function applied to $u$ by the labelling $\sigma_{M}$.

\begin{claim}
\label{jabber}
$u$ is not a leaf of $T_{M}$.
\end{claim}

\begin{proof}
Let us assume that $u$ is a leaf.
Note that $\desc(e)=U_{e}=\{\varphi_{x}^{-1}(u)\}$.
The label applied to $u$ in the $\Sigma$\dash tree
$\enc(T_{M},\sigma_{M},\varphi_{M},\{Y_{i}\})$ is
$(f,s)$, where $s\in\{0,1\}^{\{i\}}$ is the function such that $s(i)=1$ if
$\varphi_{x}^{-1}(u)$ is in $Y_{i}$, and otherwise
$s(i)=0$.
We have defined $A$ in such a way that
$q=f(s(i))$.

Assume that $Y_{i}\cap \desc(e)$ is dependent.
The only way this can occur is if $\varphi_{x}^{-1}(u)$ is a loop
contained in $Y_{i}$.
In this case $q=f(s(i))=f(1)$, and $f(1)=\dep$, by the construction of $f$.
Therefore $u$ does not provide a counterexample to the \namecref{gelato}, contrary
to assumption.
Hence $Y_{i}\cap \desc(e)$ is independent in $M$.

Next assume that $q=\dep$.
But $q$ is $f(s(i))$, and this takes the value $\dep$
only if $s(i)=1$ and $Y_{i}\cap \desc(e)=U_{e}$, and furthermore,
this set is dependent.
Again, $u$ does not provide a counterexample, so we conclude that $q\ne \dep$.

Observe that $e$ is not a basepoint edge,
as this would imply $|\desc(e)|\geq 2$, and this is not the
case when $u$ is a leaf.
From this we deduce that
$\closure{Y_{i}}_{x}\cap U_{e}=Y_{i}\cap U_{e}$.
Assume that $Y_{i}\cap U_{e}=\emptyset$.
Then $q=f(s(i))=f(0)=q_{1}$, where
$\Rep_{e}(q_{1})$ is the empty set.
Thus $\closure{Y_{i}}_{x}\cap U_{e}$ and $ \Rep_{e}(q)$ are actually
equal, and thus certainly equivalent under $\sim_{U_{e}}$, as desired.
Now we assume that $U_{e}\subseteq Y_{i}$,
so $\closure{Y_{i}}_{x}\cap U_{e}=U_{e}=\{\varphi_{x}^{-1}(u)\}$.
Then $q=f(s(i)) = f(1)$, and this value is either $q_{1}$ or $q_{2}$.
In the former case, $U_{e}\approx_{U_{e}}\emptyset$, so
$(\closure{Y_{i}}_{x}\cap U_{e}) \approx_{U_{e}} \emptyset$.
As $\emptyset$ is $\Rep_{e}(q_{1})$, we are done.
Therefore we consider the case that $q=q_{2}$.
In this case $\Rep_{e}(q_{2})=U_{e}=\closure{Y_{i}}_{x}\cap U_{e}$,
so \Cref{gelato} holds.
This contradiction means that \Cref{jabber} is proved.
\end{proof}

Because \Cref{jabber} tells us that $u$ is not a leaf, we let $u_{L}$ and $u_{R}$ be
the children of $u$ in $T_{M}$, and we assume that these are the bottom
vertices of the edges $e_{L}$ and $e_{R}$.
Note that $\desc(e)$ is the disjoint union of $\desc(e_{L})$ and
$\desc(e_{R})$.
Observe also that $e_{L}$ and $e_{R}$ are edges of the same tree,
$T_{y}$, where $y$ is a node of $T$ that may or may not be equal to $x$.
If $y\ne x$, then $e$ is a basepoint edge.
Let $q_{L}$ and $q_{R}$ be the states applied to $u_{L}$ and $u_{R}$
by the run of $A$ on $\enc(T_{M},\sigma_{M},\varphi_{M},\{Y_{i}\})$.
Our inductive assumption on $u$ means that \Cref{gelato} holds for
$u_{L}$ and $u_{R}$.

\begin{claim}
\label{bulgar}
$Y_{i}\cap \desc(e_{L})$ and $Y_{i}\cap \desc(e_{R})$ are independent
in $M$, and $\closure{Y_{i}}_{y}\cap U_{e_{L}}$ and
$\closure{Y_{i}}_{y}\cap U_{e_{R}}$ are independent in $M_{y}$.
\end{claim}

\begin{proof}
If $Y_{i}\cap \desc(e_{L})$ is dependent, then so is
$Y_{i}\cap \desc(e)$.
In this case the inductive assumption tells us that $q_{L}=\dep$.
Now the construction of $f$ and $A$ means that $q=\dep$.
But this means that $u$ does not provide us with a counterexample.
Hence $Y_{i}\cap \desc(e_{L})$, and symmetrically
$Y_{i}\cap \desc(e_{R})$, is independent in $M$.

Assume that $\closure{Y_{i}}_{y}\cap U_{e_{L}}$ is dependent in $M_{y}$.
If $u_{L}$ is not a leaf of $T_{M}$ and $e_{L}$ is not a basepoint edge,
then we can apply \Cref{rabbit}~(i) to the two edges connecting $u_{L}$
to its children.
This then implies that $\closure{Y_{i}}_{y}\cap \desc(e_{L})$ is
dependent in $M$, contradicting the conclusion of the previous paragraph.
Therefore $u_{L}$ is a leaf or $e_{L}$ is a basepoint edge.
In either case, $U_{e_{L}}$ is a singleton set, and this set must
contain a loop, as $\closure{Y_{i}}_{y}\cap U_{e_{L}}$ is dependent.
A basepoint cannot be a loop, so $u_{L}$ is a leaf of $T_{M}$.
Thus $U_{e_{L}}=\desc(e_{L})$.
Now the dependence of $\closure{Y_{i}}_{y}\cap U_{e_{L}}$ implies the
dependence of $Y_{i}\cap \desc(e_{L})$, a contradiction.
The \namecref{bulgar} follows by a symmetrical argument for
$\closure{Y_{i}}_{y}\cap U_{e_{R}}$.
\end{proof}

\Cref{bulgar} and the inductive assumption now mean that
$q_{L}=q_{j}$ and $q_{R}=q_{k}$, for some values of $j$ and $k$.
Let $X_{j}$ and $X_{k}$  stand for
$\Rep_{e_{L}}(q_{j})$ and $\Rep_{e_{R}}(q_{k})$.
Then
$(\closure{Y_{i}}_{y} \cap U_{e_{L}}) \sim_{U_{e_{L}}} X_{j}$ and
$(\closure{Y_{i}}_{y} \cap U_{e_{R}}) \sim_{U_{e_{R}}} X_{k}$.

\begin{claim}
\label{wobble}
$\closure{Y_{i}}_{y}\cap (U_{e_{L}}\cup U_{e_{R}})$ is independent in $M_{y}$,
$Y_{i}\cap \desc(e)$ is independent in $M$, and
$\closure{Y_{i}}_{x}\cap U_{e}$ is independent in $M_{x}$.
\end{claim}

\begin{proof}
Assume that
\[\closure{Y_{i}}_{y}\cap (U_{e_{L}}\cup U_{e_{R}})
=(\closure{Y_{i}}_{y} \cap U_{e_{L}})\cup (\closure{Y_{i}}_{y} \cap U_{e_{R}})
\]
is dependent in $M_{y}$.
Then \Cref{icecap} implies that $X_{j}\cup X_{k}$ is dependent in $M_{y}$.
The construction of $f$ and $A$ now means that $q=\dep$.
\Cref{rabbit}~(i) implies that $Y_{i}\cap \desc(e)$ is dependent, so
$u$ fails to provide a counterexample.
Therefore $\closure{Y_{i}}_{y}\cap (U_{e_{L}}\cup U_{e_{R}})$ is independent
in $M_{y}$.

Assume that $Y_{i}\cap \desc(e)$ is dependent in $M$.
As $Y_{i}\cap \desc(e_{L})$ and $Y_{i}\cap \desc(e_{R})$
are independent by \Cref{bulgar}, \Cref{rabbit}~(ii) implies that
$\closure{Y_{i}}_{y}\cap (U_{e_{L}}\cup U_{e_{R}})$ is
dependent in $M_{y}$, contradicting the previous paragraph.

Finally, assume that $\closure{Y_{i}}_{x}\cap U_{e}$ is dependent in $M_{x}$.
Then $x\ne y$, or else $U_{e}$ is the disjoint union of 
$U_{e_{L}}$ and $U_{e_{R}}$, and we have a contradiction to the
first paragraph.
Hence $e$ is a basepoint edge, meaning that $U_{e}$ is a single
element, and this element must be a loop.
A basepoint cannot be a loop, so we have a contradiction.
\end{proof}

Assume that $x=y$, so that $e$, $e_{L}$, and $e_{R}$ are all
edges of $T_{x}$.
In this case $U_{e}$ is the disjoint union of $U_{e_{L}}$ and $U_{e_{R}}$.
\Cref{wobble} says that
$\closure{Y_{i}}_{x}\cap U_{e}=
(\closure{Y_{i}}_{x}\cap U_{e_{L}})\cup (\closure{Y_{i}}_{x}\cap U_{e_{R}})$
is independent in $M_{x}$, so \Cref{icecap} implies that
$X_{j}\cup X_{k}$ is independent.
Therefore $q=q_{l}$ for some $q_{l}$
such that $(X_{j}\cup X_{k})\approx_{U_{e}} \Rep_{e}(q_{l})$.
Hence $(X_{j}\cup X_{k})\sim_{U_{e}} \Rep_{e}(q_{l})$.
We also know from \Cref{icecap} that
$(\closure{Y_{i}}_{x}\cap U_{e})\sim_{U_{e}} (X_{j}\cup X_{k})$.
Therefore $(\closure{Y_{i}}_{x}\cap U_{e})\sim_{U_{e}} \Rep_{e}(q_{l})$
and \Cref{gelato} holds for $u$, a contradiction.

Now we must assume that $y\ne x$, so that $e$ is a basepoint edge.
This means that $e$ is incident with a leaf, $\varphi_{x}(e_{\alpha})$,
in $T_{x}$, and $e_{\alpha}$ is the parent basepoint of $T_{y}$.
Therefore $U_{e}=\{e_{\alpha}\}$, and $e_{\alpha}$ is in
$\closure{Y_{i}}_{x}$ if and only if $e_{\alpha}$ is in
\[\cl_{M_{y}}(\closure{Y_{i}}_{y})=
\cl_{M_{y}}((\closure{Y_{i}}_{y}\cap U_{e_{L}})\cup (\closure{Y_{i}}_{y}\cap U_{e_{R}})).\]
Note that $(\closure{Y_{i}}_{y}\cap U_{e_{L}})\cup (\closure{Y_{i}}_{y}\cap U_{e_{R}})$
is independent in $M_{y}$, by \Cref{wobble}.

Assume that $e_{\alpha}$ is in $\closure{Y_{i}}_{x}$, so that
$\closure{Y_{i}}_{x}\cap U_{e}=U_{e}=\{e_{\alpha}\}$.
In this case
\[
(\closure{Y_{i}}_{y}\cap U_{e_{L}})\cup (\closure{Y_{i}}_{y}\cap U_{e_{R}})\cup\{e_{\alpha}\}
\]
is dependent in $M_{y}$.
From \Cref{icecap}, we have that
$X_{j}\cup X_{k}$ is independent in $M_{y}$, and
equivalent under $\sim_{(U_{e_{L}}\cup U_{e_{R}})}$ to
$(\closure{Y_{i}}_{y}\cap U_{e_{L}})\cup (\closure{Y_{i}}_{y}\cap U_{e_{R}})$.
Therefore $X_{j}\cup X_{k}\cup\{e_{\alpha}\}$ is also dependent in $M_{y}$.
The construction of the function $f$ now means that
$q$ is either $q_{1}$ or $q_{2}$.
In the first case, $U_{e}\approx_{U_{e}}\emptyset$.
Hence $(\closure{Y_{i}}_{x}\cap U_{e})\sim_{U_{e}}\emptyset$,
and as $\emptyset=\Rep_{e}(q_{1})$, 
we see that $u$ satisfies the \namecref{gelato}.
Therefore $q=q_{2}$, and $\Rep_{e}(q_{2})=U_{e}$.
In this case $\closure{Y_{i}}_{x}\cap U_{e}$ and $\Rep_{e}(q_{2})$ are
equal, so $(\closure{Y_{i}}_{x}\cap U_{e})\sim_{U_{e}}\Rep_{e}(q_{2})$ certainly
holds, and we have a contradiction.

Now we must assume that $e_{\alpha}$ is not in
$\closure{Y_{i}}_{x}$.
Hence $\closure{Y_{i}}_{x}\cap U_{e}=\emptyset$.
But in this case
\[
(\closure{Y_{i}}_{y}\cap U_{e_{L}})\cup (\closure{Y_{i}}_{y}\cap U_{e_{R}})\cup\{e_{\alpha}\}
\]
is independent in $M_{y}$.
Using the arguments from the previous paragraph, we show that
$X_{j}\cup X_{k}\cup\{e_{\alpha}\}$ is also independent in $M_{y}$,
so $q=q_{1}$, where $\Rep_{e}(q_{1})=\emptyset$.
Now $\emptyset=\closure{Y_{i}}_{x}\cap U_{e}$, so
$(\closure{Y_{i}}_{x}\cap U_{e})\sim_{U_{e}}\Rep_{e}(q_{1})$ holds,
and this contradiction completes the proof of \Cref{gelato}.
\end{proof}

Now we can prove that $(T_{M},\sigma_{M})$ is a parse tree for $A$.
Let the left child of the root $t$ be $u_{L}$, and let the right child be $u_{R}$.
We let $e_{L}$ and $e_{R}$ be the edges joining $t$ to these children.
Recall that $U_{e_{L}}=\{e_{m,L}\}$ and $U_{e_{R}}=\{e_{m,R}\}$, and
$M_{t}$ is a copy of $U_{1,2}$ on the ground set
$\{e_{m,L},e_{m,R}\}$.
Let $Y_{i}$ be a subset of $E$, and let
$q$ be the state applied to $t$ by the run of $A$ on
$\enc(T_{M},\sigma_{M},\varphi_{M},\{Y_{i}\})$.
We let $q_{L}$ and $q_{R}$ be the states applied to $u_{L}$ and $u_{R}$.

In the first case, we assume that $q=\dep$, and we aim
to show that $Y_{i}$ is dependent.
Our construction of the function labelling $t$ means $(q_{L},q_{R})$ either
contains the symbol $\dep$, or is $(q_{2},q_{2})$.
If $q_{L}=\dep$, then $Y_{i}\cap \desc(e_{m,L})$ is dependent
in $M$ by \Cref{gelato}, and hence $Y_{i}$ is dependent in $M$.
By symmetry, we assume that neither $q_{L}$ nor $q_{R}$ is
\dep, so $(q_{L},q_{R})=(q_{2},q_{2})$.
From this we see that $\Rep_{e_{L}}(q_{2})=U_{e_{L}}=\{e_{m,L}\}$,
and moreover
$(\closure{Y_{i}}_{t}\cap U_{e_{L}})\sim_{U_{e_{L}}} U_{e_{L}}$.
Because $\Rep_{e_{L}}(q_{2})$ is defined,
$U_{e_{L}}\not\approx_{U_{e_{L}}}\emptyset$, so
$\closure{Y_{i}}_{t}\cap U_{e_{L}}$ is not empty.
Therefore $\closure{Y_{i}}_{t}$ contains $e_{m,L}$.
By symmetry, $\closure{Y_{i}}_{t}$ contains $e_{m,R}$.
Now $\closure{Y_{i}}_{t}\cap (U_{e_{L}}\cup U_{e_{R}})
=\{e_{m,L},e_{m,R}\}$ is dependent in $M_{t}$.
\Cref{rabbit}~(i) implies that
$Y_{i}\cap (\desc(e_{L})\cup \desc(e_{R}))=Y_{i}$
is dependent in $M$, exactly as we wanted.

In the second case, we assume that $Y_{i}$ is dependent in $M$.
If $Y_{i}\cap \desc(e_{L})$ is dependent, then
$q_{L}=\dep$ by \Cref{gelato}.
In this case, $q=\dep$, which is what we want.
Therefore we assume by symmetry that
$Y_{i}\cap \desc(e_{L})$ and $Y_{i}\cap \desc(e_{R})$
are independent in $M$.
As $Y_{i}$ is dependent, \Cref{rabbit}~(ii)
implies that
$\closure{Y_{i}}_{t}\cap (U_{e_{L}}\cup U_{e_{R}})
=\closure{Y_{i}}_{t}\cap \{e_{m,L},e_{m,R}\}$ is dependent
in $M_{t}$.
Therefore $\closure{Y_{i}}_{t}= \{e_{m,L},e_{m,R}\}$.
Let $x_{L}$ be the node of $T$ joined to $t$ by $e_{m,L}$, and
define $x_{R}$ similarly.
Then $e_{m,L}\in \closure{Y_{i}}_{t}$ implies that
$e_{m,L}$ is in $\cl_{M_{x_{L}}}(\closure{Y_{i}}_{x_{L}})$.

Since every node of $T$ other than $t$ corresponds to a matroid
with at least three elements, it follows that neither $u_{L}$ nor $u_{R}$
is a leaf in $T_{M}$.
Let $e_{LL}$ and $e_{LR}$ be the edges that join $u_{L}$
to its children: $u_{LL}$ and $u_{LR}$.
Then $Y_{i}\cap\desc(e_{LL})$ and $Y_{i}\cap \desc(e_{LR})$ are
independent in $M$, as they are subsets of $Y_{i}\cap \desc(e_{L})$.
Therefore $A$ applies states $q_{j}$ and $q_{k}$ to
$u_{LL}$ and $u_{LR}$.
Let $X_{j}$ and $X_{k}$ be $\Rep_{e_{LL}}(q_{j})$ and
$\Rep_{e_{LR}}(q_{j})$ respectively.
Then
$(\closure{Y_{i}}_{x_{L}}\cap U_{e_{LL}})\sim_{U_{e_{LL}}} X_{j}$
and
$(\closure{Y_{i}}_{x_{L}}\cap U_{e_{LR}})\sim_{U_{e_{LR}}} X_{k}$
by \Cref{gelato}.
\Cref{icecap} implies that
$\closure{Y_{i}}_{x_{L}}\cap (U_{e_{LL}}\cup U_{e_{LR}})
=\closure{Y_{i}}_{x_{L}}$
is equivalent to $X_{j}\cup X_{k}$ under
$\sim_{(U_{e_{LL}}\cup U_{e_{LR}})}$.
From $e_{m,L}\in \cl_{M_{x_{L}}}(\closure{Y_{i}}_{x_{L}})$, we see that
$\closure{Y_{i}}_{x_{L}}\cup\{e_{m,L}\}$ is dependent in $M_{x_{L}}$.
Therefore $X_{j}\cup X_{k}\cup\{e_{m,L}\}$ is also
dependent.
Because $U_{e_{L}}=\{e_{m,L}\}$ is certainly not equivalent to
$\emptyset$ under $\sim_{U_{e_{L}}}$, we see that
$A$ applies the state $q_{2}$ to $u_{L}$.
By symmetry it applies $q_{2}$ to $u_{R}$.
Thus $q=f(q_{2},q_{2})=\dep$, where $f$ is the function
applied to $t$ by $\sigma_{M}$.

We have shown that $A$ accepts
$\enc(T_{M},\sigma_{M},\varphi_{M},\{Y_{i}\})$ if and 
only if $Y_{i}$ is independent in $M$, exactly as we wanted.

\textbf{Reducing to the connected case.}
Our final task is to show that we can construct a parse tree for $M$ when $M$ is
not connected.
In the first part of the proof, we have established that there is a
fixed-parameter tractable algorithm for constructing a parse tree relative to the
automaton $A$, when $M$ is connected.

We augment $A$ to obtain the automaton $A'$.
We add a new character, $\kappa$, to the alphabet of $A$,
and we add new states, $\dep'$ and $\indep'$, to its state space.
We augment the transition rules so that
$\delta_{2}(\kappa,\alpha,\beta)$ is
$\{\indep'\}$ when both $\alpha$ and $\beta$ are
$\indep'$ or accepting states of $A$, and set
$\delta_{2}(\kappa,\alpha,\beta)$ to be $\{\dep'\}$ otherwise.
The accepting states of $A'$ are the accepting states of $A$,
along with $\indep'$.
We can identify the connected components, $M_{1},\ldots, M_{n}$,
of $M$ in polynomial time \cite{BC95}.
We assume that $n>1$.
Each $M_{j}$ is in \mcal{M}, as \mcal{M} is minor-closed,
and we can construct a description $\Delta(M_{j})$ in polynomial
time, as $\Delta$ is minor-compatible.
Moreover, $\bw(M_{j})\leq \lambda$ \cite[Proposition 14.2.3]{Oxl11},
Therefore we have a fixed-parameter tractable algorithm for constructing
the parse trees, $(T_{M_{j}},\sigma_{M_{j}})$.
Now we construct a rooted tree with $n$ leaves, where each non-leaf has a left child and
a right child, and we apply the label $\kappa$ to each non-leaf vertex.
We then identify the $n$ leaves with the $n$ roots in
$T_{M_{1}},\ldots, T_{M_{n}}$.
Now it is straightforward to verify that $A'$ will use $A$ to check
independence in each connected component of $M$,
and accept if and only if $A$ accepts in each of those components.
Therefore $A'$ decides $\ind(X_{i})$ for any matroid in \mcal{M}.
Thus we have constructed a parse tree for $M$.
This completes the proof of \Cref{kibble}.
\end{proof}

\Cref{ermine} and \Cref{kibble} immediately lead to the following result.

\begin{theorem}
\label{marker}
Let \mcal{M} be a minor-closed class of matroids with a
minor-compatible representation, $\Delta$.
Assume that $\{M\in \mcal{M}\colon M\ \text{is $3$-connected}\}$
is efficiently pigeonhole.
Let $\psi$ be any sentence in \cmso.
There is a fixed-parameter tractable algorithm which will test
whether $\psi$ holds for matroids in \mcal{M}, where the
parameter is branch-width.
\end{theorem}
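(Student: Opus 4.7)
The plan is to compose the two algorithms supplied by \Cref{ermine} and \Cref{kibble}, which between them do essentially all of the work. Given input $\Delta(M)$ for $M\in\mcal{M}$ with $\bw(M)\leq\lambda$, first apply \Cref{kibble}: since \mcal{M} is minor-closed, $\Delta$ is minor-compatible, and the $3$-connected members of \mcal{M} form an efficiently pigeonhole class, we obtain in time fixed-parameter tractable in $\lambda$ a parse tree $(T_M,\sigma_M)$ together with a bijection $\varphi_M\colon E(M)\to L(T_M)$ relative to some $\{i\}$-ary tree automaton $A'$ that simulates the independence oracle on the image of $\Delta$. This certifies that, when restricted to matroids in \mcal{M} of branch-width at most $\lambda$, the class is automatic in the sense of \Cref{dancer}.

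The second step is to feed this parse tree into \Cref{ermine}. Because the parse tree is produced relative to $A'$, the hypotheses of \Cref{ermine} are satisfied, and the constructive proof of \Cref{family} turns the pair $(A',\psi)$ into a single deterministic tree automaton $A_\psi$ that accepts $\enc(T_M,\sigma_M,\varphi_M,\emptyset)$ if and only if $M$ satisfies $\psi$. Crucially, $\psi$ is a \emph{fixed} input that does not grow with $|E(M)|$, so building $A_\psi$ from $A'$ and $\psi$ is a preprocessing step whose cost depends only on $\psi$ and $\lambda$ (via the constant alphabet and state space of $A'$ associated with the bound $K=\pi(3\lambda)$ inside \Cref{kibble}). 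Running $A_\psi$ on the parse tree is then linear in the number of nodes of $T_M$, which is $O(|E(M)|)$.

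Adding the running times, the total is the fixed-parameter-tractable cost of producing the parse tree via \Cref{kibble} plus an $O(|E(M)|)$ sweep to execute $A_\psi$, and this is again fixed-parameter tractable with parameter $\bw(M)$. There is essentially no remaining obstacle: the genuinely new technical content — namely, extracting a parse tree in FPT time when only the $3$-connected members of \mcal{M} are assumed efficiently pigeonhole, which forces the $2$-sum and parallel-connection bookkeeping in \Cref{kibble} — has already been handled. The proof of \Cref{marker} therefore consists of citing \Cref{kibble} to build $(T_M,\sigma_M,\varphi_M)$ and then citing \Cref{ermine} to decide $\psi$ on that parse tree, the two being concatenated into a single FPT algorithm.
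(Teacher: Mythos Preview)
Your proposal is correct and matches the paper's own argument exactly: the paper states that \Cref{marker} follows immediately from \Cref{ermine} and \Cref{kibble}, and you have spelled out precisely that composition. If anything, your write-up is more explicit than the paper's one-line justification, correctly noting that the automaton $A_\psi$ depends only on $\psi$ and $\lambda$ (via $K=\pi(3\lambda)$) and is built in preprocessing.
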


\section{Decidability and definability}
\label{decidability}

The theorems of Courcelle and Hlin\v{e}n\'{y} have as their goal
efficient model-checking: given a sentence and a
graph/matroid, we test whether the sentence is satisfied by that object.
Decidability is orthogonal to this problem:
given a class of objects and a sentence, we want to decide
(in finite time, but not necessarily efficiently) if that sentence is a theorem for the class.

\begin{definition}
\label{alaska}
Let \mcal{M} be a class of set-systems.
The \emph{\cmso\ theory} of \mcal{M} is the collection of
\cmso\ sentences that are satisfied by all set-systems in \mcal{M}.
We say that the \cmso\ theory of \mcal{M} is \emph{decidable} if there is
a Turing Machine which takes as input any sentence in
\cmso, and after a finite amount of time decides whether or not the sentence
is in the theory of \mcal{M}.
\end{definition}

The key idea in the forthcoming decidability proofs is that, given a tree automaton,
there is a finite procedure which will decide if there is a tree that the
automaton will accept.
(See, for example, \cite[Theorem 3.74]{Eng15}.)

\begin{lemma}
\label{fleece}
Let $A=(\Sigma, Q, F, \delta_{0},\delta_{2})$ be a tree automaton.
Let $Z$ be a subset of $Q$, and let $q$ be a state in $Q-Z$.
There is a finite procedure for deciding the following question:
does there exist a $\Sigma$\dash tree, $(T,\sigma)$, with root $t$
such that if $r$ is the run of $A$ on $(T,\sigma)$, then
$q\in r(t)$, and $r(v)\cap Z=\emptyset$ for every vertex $v$ of $T$.
\end{lemma}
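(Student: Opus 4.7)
The plan is to treat this as the emptiness problem for a restricted version of the automaton $A$, solved by the standard bottom-up saturation technique. I would compute, iteratively, the collection $R\subseteq 2^{Q-Z}$ consisting of all subsets $S\subseteq Q-Z$ for which there exists a $\Sigma$\dash tree $(T,\sigma)$ with root $t$ whose run $r$ satisfies $r(t)=S$ and $r(v)\cap Z=\emptyset$ at every vertex $v$. Once $R$ has been constructed, the answer to the question in the statement is affirmative if and only if $R$ contains some $S$ with $q\in S$, which is a finite check.

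The saturation proceeds as follows. Initialise $R=\emptyset$, and for every $\alpha\in\Sigma$ such that $\delta_{0}(\alpha)$ is defined and $\delta_{0}(\alpha)\cap Z=\emptyset$, insert $\delta_{0}(\alpha)$ into $R$; these are precisely the sets realised by single-vertex trees. Then repeatedly apply the following rule: whenever $S_{L},S_{R}\in R$ and $\alpha\in\Sigma$ are such that
\[
S\;=\;\bigcup_{(q_{L},q_{R})\in S_{L}\times S_{R}} \delta_{2}(\alpha,q_{L},q_{R})
\]
is fully defined and satisfies $S\cap Z=\emptyset$, insert $S$ into $R$. Because $R$ is always a subset of the finite set $2^{Q-Z}$, and each round that changes $R$ adds at least one new element, the process halts after at most $2^{|Q|}$ insertions; each rule application can be checked in finite time, and there are only finitely many tuples $(S_{L},S_{R},\alpha)$ to consider per round.

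The correctness splits into two inclusions. One direction is immediate: whenever a set is added to $R$, we can produce an explicit witness tree by taking (inductively assumed) witness trees for $S_{L}$ and $S_{R}$, grafting them as the left and right subtrees of a new root labelled $\alpha$, and verifying that the run assigns exactly $S$ to this root and remains disjoint from $Z$ throughout. The other direction follows by induction on the height of a given witness tree $(T,\sigma)$: the value $r(v)$ at each vertex $v$ is either $\delta_{0}(\sigma(v))$ (captured by the base case) or is obtained from the $R$\dash values at its children via the $\delta_{2}$-rule (captured by the inductive rule), with disjointness from $Z$ guaranteed by hypothesis.

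I do not foresee a serious obstacle; the only subtle point is that a run assigns \emph{sets} of states rather than individual states, forcing us to work inside the power set $2^{Q}$. This is harmless because $Q$ is finite, so $2^{Q}$ is finite and the saturation terminates. (Equivalently, one could first apply \Cref{folder} to replace $A$ by a deterministic automaton on state space $2^{Q}$, restrict its transition tables to avoid any state meeting $Z$, and then run the classical finite emptiness test; this is the same computation dressed up differently.)
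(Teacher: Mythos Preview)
Your proof is correct and takes a genuinely different route from the paper's. The paper argues by induction on $|Q-Z|$: it first observes that one may assume $q$ occurs in $r(v)$ only at the root, and then for each triple $(\alpha,q_{L},q_{R})$ with $q\in\delta_{2}(\alpha,q_{L},q_{R})$ and $\delta_{2}(\alpha,q_{L},q_{R})\cap Z=\emptyset$ it recursively searches for subtrees whose runs avoid the enlarged forbidden set $Z\cup\{q\}$ while hitting $q_{L}$ and $q_{R}$ at their roots; the recursion terminates because the pool $Q-Z$ strictly shrinks. Your saturation is instead the standard emptiness test applied to the determinised automaton of \Cref{folder} with every state meeting $Z$ excised, exactly as you remark at the end. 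The paper's recursion is structurally light and makes the inductive parameter explicit, whereas your approach gives a concrete bound of at most $2^{|Q-Z|}$ insertions and, because you track the full set $r(t_{L})$ rather than a single witness state $q_{L}\in r(t_{L})$, the check that the assembled tree satisfies $r(t)\cap Z=\emptyset$ at the new root is immediate---a point that requires a little more care in the paper's construction, since other states present in $r_{L}(t_{L})\times r_{R}(t_{R})$ could in principle feed an element of $Z$ into $r(t)$ via $\delta_{2}$.
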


\begin{proof}
Note that if $r(v)$ contains $q$, where $v$ is a non-root vertex, then we
may as well consider the subtree of $(T,\sigma)$ that has $v$ as its
root.
This means that we lose no generality in searching only for $\Sigma$\dash
trees such that $q$ is contained in $r(t)$, but not in $r(v)$ when $v$ is a
non-root vertex.
Our search will construct the desired tree, $T$, or establish that it does not exist.

We proceed by induction on $|Q-Z|$.
Assume $Q-Z$ contains only $q$.
If $\delta_{0}(\alpha)=\{q\}$ for some $\alpha\in \Sigma$,
then we return YES: we simply
consider the $\Sigma$\dash tree consisting of a single
leaf labelled with $\alpha$.
If no such $\alpha$ exists, then we return NO.
This completes the proof of the base case, so now we make the
obvious inductive assumption.

If $\delta_{0}(\alpha)\cap Z=\emptyset$ and $q\in\delta_{0}(\alpha)$
for some $\alpha\in \Sigma$, then we can construct the $\Sigma$\dash tree
with a single leaf labelled with $\alpha$, and the answer is YES.
Therefore we assume that no such $\alpha$ exists.

We search for tuples $(\alpha,q_{L},q_{R})\in\Sigma\times Q\times Q$
such that $q\in \delta_{2}(\alpha,q_{L},q_{R})$ and
$\delta_{2}(\alpha,q_{L},q_{R})\cap Z=\emptyset$.
If no such tuple exists, then we halt and return NO.
Otherwise, for each such tuple, we search for $\Sigma$\dash trees,
$(T_{L},\sigma_{L})$ and $(T_{R},\sigma_{R})$,
with the following properties:
If $r_{L}$ and $r_{R}$ are the runs on these trees, then
$r_{L}(v)\cap (Z\cup \{q\})=\emptyset$ for each vertex $v$ of $T_{L}$,
and similarly
$r_{R}(v)\cap (Z\cup \{q\})=\emptyset$.
Furthermore, $q_{L}$ is in $r_{L}(t_{L})$, and
$q_{R}$ is in $r_{R}(t_{R})$, where $t_{L}$ and $t_{R}$ are the roots of
$T_{L}$ and $T_{R}$.
By induction, we can construct such trees, if they exist.
If they do exist, then we construct $T$ from the disjoint union of $T_{L}$ and $T_{R}$
by adding a root $t$, and making its children $t_{L}$ and $t_{R}$.
We then apply the label $\alpha$ to $t$.
This justifies returning the answer YES.
If we find that no such trees exist
for each tuple $(\alpha,q_{L},q_{R})$,
then we return NO.
\end{proof}

\begin{corollary}
\label{alcove}
Let $A=(\Sigma, Q, F, \delta_{0},\delta_{2})$ be a tree automaton.
There is a finite procedure to decide whether there exists
a $\Sigma$\dash tree that $A$ accepts.
\end{corollary}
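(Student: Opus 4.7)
The plan is to derive \Cref{alcove} as a direct consequence of \Cref{fleece}, iterating over the accepting states. By definition, $A$ accepts a $\Sigma$\dash tree $(T,\sigma)$ if and only if the run $r$ of $A$ on $(T,\sigma)$ satisfies $r(t) \cap F \neq \emptyset$, where $t$ is the root of $T$. Equivalently, there exists some accepting state $q \in F$ such that $q \in r(t)$. Hence the existence of a $\Sigma$\dash tree accepted by $A$ is equivalent to the disjunction, over all $q \in F$, of the statement ``there exists a $\Sigma$\dash tree $(T,\sigma)$ with root $t$ such that $q \in r(t)$.''

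First I would observe that this latter statement, for a fixed $q \in F$, is precisely the instance of \Cref{fleece} obtained by setting $Z = \emptyset$. Indeed, with $Z = \emptyset$ we have $q \in Q - Z = Q$ (which is permitted since $F \subseteq Q$), and the side condition ``$r(v) \cap Z = \emptyset$ for every vertex $v$'' is vacuous. Thus \Cref{fleece} supplies a finite procedure which decides, for each such $q$, whether a $\Sigma$\dash tree witnessing $q \in r(t)$ exists.

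The overall procedure is then to enumerate the elements of $F$ (a finite set, since $Q$ is finite by \Cref{nuance}), and for each $q \in F$ invoke the algorithm of \Cref{fleece} with $Z = \emptyset$. If any of these calls returns YES, we return YES. If all of them return NO, we return NO. Correctness is immediate from the equivalence in the first paragraph, and finiteness follows from the fact that we make only $|F|$ calls, each to a finite subprocedure.

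I do not anticipate any real obstacle: all of the work has already been done in \Cref{fleece}, and the only genuine content of \Cref{alcove} is the observation that acceptance amounts to reaching some state in $F$ at the root, which can be tested one state at a time. The only minor point worth verifying is that $Z = \emptyset$ is a legitimate input to \Cref{fleece}; this is clear from its statement, since $q$ is merely required to lie in $Q - Z$.
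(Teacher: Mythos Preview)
Your proposal is correct and matches the paper's own proof exactly: the paper simply says to repeatedly apply \Cref{fleece} with $Z$ set to be the empty set and $q$ set to be a state in $F$. Your additional justification that $Z=\emptyset$ is a valid input and that the side condition becomes vacuous is accurate and only makes the argument more explicit.
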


\begin{proof}
We repeatedly apply \Cref{fleece} with $Z$ set to be the empty set, and $q$
set to be a state in $F$.
\end{proof}

When we say that a class of set-systems is \emph{definable}
we mean there is a \cmso\ sentence, $\tau$, such that
a set-system satisfies $\tau$ if and only if it is in the class.
The matroid independence axioms can be stated in \cmso.
If $N$ is a fixed matroid, there is a \cmso\ sentence that characterises
the matroids having a minor isomorphic to $N$ \cite[Lemma 5.1]{Hli03b}.
From this it follows that a minor-closed class of matroids is definable if
it has a finite number of excluded minors.
There are also definable minor-closed classes that have infinitely
many excluded minors (\Cref{spread}).
The class of \mbb{K}\dash representable matroids is not
definable when \mbb{K} is an infinite field \cite{MNW18}.

\begin{theorem}
\label{carafe}
Let \mcal{M} be a definable class of set-systems with
bounded decomposition-width.
The \cmso\ theory of \mcal{M} is decidable.
\end{theorem}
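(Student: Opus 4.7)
The plan is to reduce the decidability question to the emptiness problem for tree automata, which is solved by \Cref{alcove}. Let $\tau$ be a \cmso\ sentence defining \mcal{M}, and let $K$ be a bound such that $\dw(M)\leq K$ for every $M\in \mcal{M}$. Applying \Cref{shader} to the class of all set-systems of decomposition-width at most $K$, we obtain an $\{i\}$\dash ary tree automaton $A'$ with some alphabet $\Sigma$ having the property that every $M\in\mcal{M}$ admits a parse tree $(T_{M},\sigma_{M})$ and bijection $\varphi_{M}\colon E(M)\to L(T_{M})$ with $M = M(A',T_{M},\sigma_{M},\varphi_{M})$.

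Now, given any candidate sentence $\psi\in\cmso$, apply \Cref{family} to the sentence $\tau\land\neg\psi$ (which, being a conjunction of sentences, has no free variables, so $I=\emptyset$). This yields a tree automaton $A_{0}$ with alphabet $\Sigma$ such that, for every $\Sigma$\dash tree $(T,\sigma)$ and every bijection $\varphi$ from a finite set $E$ into $L(T)$, the automaton $A_{0}$ accepts $(T,\sigma)$ if and only if $M(A',T,\sigma,\varphi)$ satisfies $\tau\land\neg\psi$.

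I claim $\psi$ lies in the \cmso\ theory of \mcal{M} if and only if $A_{0}$ accepts no $\Sigma$\dash tree. For one direction, if $A_{0}$ accepts some $(T,\sigma)$, then $M(A',T,\sigma,\varphi)$ satisfies $\tau$, so by definability it belongs to \mcal{M}, and it also satisfies $\neg\psi$, providing a counterexample. For the converse, if some $M\in\mcal{M}$ fails $\psi$, then the parse tree $(T_{M},\sigma_{M})$ gives $M(A',T_{M},\sigma_{M},\varphi_{M})=M$, which satisfies both $\tau$ (as $M\in \mcal{M}$) and $\neg\psi$, so $A_{0}$ accepts $(T_{M},\sigma_{M})$. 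By \Cref{alcove}, we can decide in finite time whether $A_{0}$ accepts any $\Sigma$\dash tree, which completes the decision procedure.

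The main conceptual point, rather than a genuine obstacle, is recognising the role of definability: the automaton $A'$ produced by \Cref{shader} is really a representation for the potentially larger class of all set-systems of decomposition-width at most $K$, and the image $\{M(A',T,\sigma,\varphi)\colon (T,\sigma)\text{ a }\Sigma\text{-tree}\}$ may well contain set-systems outside \mcal{M}; the sentence $\tau$ is exactly what is needed inside the automaton $A_{0}$ to filter these out, so that the emptiness of $A_{0}$'s language corresponds precisely to the absence of a counterexample in \mcal{M}.
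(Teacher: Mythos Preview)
Your proof is correct and follows essentially the same route as the paper: reduce to the emptiness problem for tree automata by building, via \Cref{shader} and \Cref{family}, an automaton that accepts exactly the parse trees of set-systems satisfying $\tau\land\neg\psi$, and then invoke \Cref{alcove}. Your closing paragraph on the role of definability is a helpful clarification that the paper leaves implicit.
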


\begin{proof}
Let $\psi$ be an arbitrary sentence in \cmso.
We wish to decide whether all set-systems in \mcal{M} satisfy $\psi$.
This is equivalent to deciding whether there exists a
set-system in \mcal{M} satisfying $\neg\psi$.
Let $\tau$ be a \cmso\ sentence such that
a set-system belongs to \mcal{M} if and only if it satisfies $\tau$.

\Cref{shader} implies that there is an $\{i\}$\dash ary automaton $A'$ such that
for every $M=(E,\mcal{I})$ in \mcal{M}, there is a $\Sigma$\dash tree
$(T_{M},\sigma_{M})$ and a bijection $\varphi_{M}\colon E\to L(T_{M})$ where
$A'$ accepts $\enc(T_{M},\sigma_{M},\varphi_{M},\{Y_{i}\})$ if and only if $Y_{i}$
is in \mcal{I}, for any $Y_{i}\subseteq E(M)$.
We use (the proof of) \Cref{family} to construct an automaton, $A$,
such that $A$ accepts $\enc(T,\sigma,\varphi,\emptyset)$
if and only if $M(A',T,\sigma,\varphi)$ satisfies $\tau\land \neg \psi$,
for each  $\Sigma$\dash tree $(T,\sigma)$ and each bijection
$\varphi$ from a finite set to the leaves of $T$.

According to \Cref{alcove}, we can decide in finite time whether or not there
is a $\Sigma$\dash tree that is accepted by $A$.
If $(T,\sigma)$ is such a tree, then let $\varphi$ be the identity function
on $L(T)$.
The set-system $M(A',T,\sigma,\varphi)$ satisfies
$\tau\land \neg \psi$, so it is a set-system in \mcal{M}
(as it satisfies $\tau$) that does not satisfy $\psi$.
Therefore $\psi$ is not in the theory of \mcal{M}.
On the other hand, if $M\in\mcal{M}$ does not satisfy $\psi$, then
$M=M(A',T_{M},\sigma_{M},\varphi_{M})$ satisfies
$\tau\land \neg \psi$, so $A$ will accept at least one tree,
namely $\enc(T_{M},\sigma_{M},\varphi_{M},\emptyset)$.
\end{proof}

\begin{corollary}
\label{quaker}
Let \mcal{M} be a definable pigeonhole class of matroids.
Let $\lambda$ be a positive integer.
The \cmso\ theory of $\{M\in \mcal{M}\colon \bw(M)\leq \lambda\}$
is decidable.
\end{corollary}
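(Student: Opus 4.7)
The plan is to derive the corollary by applying \Cref{carafe} to the subclass $\mcal{M}_\lambda := \{M \in \mcal{M} \colon \bw(M) \leq \lambda\}$. Two things must be verified for \Cref{carafe} to apply to $\mcal{M}_\lambda$: that $\mcal{M}_\lambda$ has bounded decomposition-width, and that $\mcal{M}_\lambda$ is definable in \cmso.

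First, by \Cref{juicer} (the pigeonhole property of $\mcal{M}$), there is an integer $\rho(\lambda)$ such that $\dw(M) \leq \rho(\lambda)$ for every $M \in \mcal{M}$ with $\bw(M) \leq \lambda$. Hence every member of $\mcal{M}_\lambda$ has decomposition-width at most $\rho(\lambda)$, and the first hypothesis is satisfied.

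Second, let $\tau$ be a \cmso\ sentence whose set-system models are exactly the matroids in $\mcal{M}$; such a $\tau$ exists by the hypothesis that $\mcal{M}$ is definable, together with the \mso-expressibility of the matroid independence axioms. If one can further exhibit a \cmso\ sentence $\chi_\lambda$ characterising the property ``is a matroid of branch-width at most $\lambda$'', then $\tau \land \chi_\lambda$ defines $\mcal{M}_\lambda$ among all set-systems, and \Cref{carafe} immediately delivers the decidability of the \cmso\ theory of $\mcal{M}_\lambda$.

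The main obstacle is therefore the construction of $\chi_\lambda$. The local separation condition $\lambda_{M}(U) \leq \lambda-1$ at a single edge of a candidate branch-decomposition is \cmso-expressible for each fixed $\lambda$ via the formula $\lambda_{M}(U) = r_{M}(U) + r_{M}(E(M)-U) - r(M)$ together with bounded-size witnesses for the relevant ranks (independent sets of specified sizes). The harder part is capturing the existence of the entire laminar family of displayed separations using only set quantifiers over $E(M)$, which requires a careful structural encoding of the branch-decomposition tree---for example by colouring the elements of $E(M)$ with a bounded palette from which the tree structure can be recovered. This encoding is the technical heart of the argument; once it is in place, the three-step reduction above completes the proof.
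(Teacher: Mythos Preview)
Your overall plan---verify bounded decomposition-width via the pigeonhole property, verify definability of $\mcal{M}_{\lambda}$, and invoke \Cref{carafe}---is exactly the paper's strategy, and your first step is fine.

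The gap is in the second step. You correctly reduce the problem to constructing a \cmso\ sentence $\chi_{\lambda}$ expressing ``branch-width at most $\lambda$'', but you then propose to encode the entire branch-decomposition tree inside $E(M)$ via a bounded colouring, and you label this encoding ``the technical heart of the argument'' without carrying it out. This is both unnecessary and far from obviously feasible: quantifying over subcubic trees with leaves bijected to $E(M)$ is not something monadic second-order logic over $E(M)$ does directly, and the colouring trick you gesture at would need substantial justification. The paper avoids this entirely. By the theorem of Geelen, Gerards, Robertson, and Whittle \cite{GGRW03}, the class of matroids with branch-width at most $\lambda$ is minor-closed with only finitely many excluded minors. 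Since for each fixed matroid $N$ the property ``has no $N$-minor'' is \mso-expressible (as recalled earlier in \Cref{decidability}), the sentence $\chi_{\lambda}$ is simply the finite conjunction of these excluded-minor clauses together with the matroid axioms. No tree encoding is needed; the definability of $\mcal{M}_{\lambda}$ is then immediate, and \Cref{carafe} applies.
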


\begin{proof}
The family of matroids with branch-width at most $\lambda$
is minor-closed, and it has finitely many excluded
minors \cite{GGRW03}.
Therefore we can let $\tau$ be a \cmso\ sentence
encoding the independence axioms for matroids,
membership of \mcal{M},
and branch-width of at most $\lambda$.
Thus a set-system satisfies $\tau$ if and only if it is
in $\{M\in \mcal{M}\colon \bw(M)\leq \lambda\}$.
This class has bounded decomposition-width, so we apply \Cref{carafe}.
\end{proof}

The resolution of Rota's conjecture \cite{GGW14} means that the class of
\mbb{F}\dash representable matroids is definable
when \mbb{F} is a finite field.
\Cref{quaker} now implies that the \cmso\ theory of \mbb{F}\dash representable
matroids with branch-width at most $\lambda$ is decidable.
This was previously proved by Hlin\v{e}n\'{y} and Seese
\cite[Corollary 5.3]{HS06}, who did not rely on Rota's conjecture.
Let \mcal{M} be any minor-closed class of \mbb{F}\dash representable
matroids, where \mbb{F} is a finite field.
Geelen, Gerards, and Whittle have also announced that
\mcal{M} has finitely many excluded
minors \cite[Theorem 6]{GGW14}.
Therefore \mcal{M} is definable, and hence
the \cmso\ theory of $\{M\in\mcal{M}\colon \bw(M)\leq \lambda\}$
is decidable, for any positive integer $\lambda$.

Let $Z$ be a flat of the matroid $M$.
If the restriction of $M$ to $Z$ contains no coloops, then $Z$ is a \emph{cyclic flat}.
A basis, $B$, of $M$ is \emph{fundamental} if
$B\cap Z$ spans $Z$ whenever $Z$ is a cyclic flat.
A matroid with a fundamental basis is a
\emph{fundamental transversal matroid} (see \cite{BKM11}).
It is an exercise to prove that $B$ is fundamental if and only if
$x$ is freely placed in the flat spanned by the fundamental circuit, $C(x,B)$,
for every $x\notin B$.
This is equivalent to saying that if $Z$ is a cyclic flat containing $x$,
then $Z$ contains $C(x,B)$.
This property can clearly be expressed in \mso.
In \cite[Theorem 6.3]{FMN-II}, we prove that the class of fundamental transversal
matroids is pigeonhole.
Therefore the \cmso\ theory of fundamental transversal matroids with
branch-width at most $\lambda$ is decidable by \Cref{quaker}.

The class of bicircular matroids can be characterised by an
\mso\ sentence \cite{FMN-III}.
The class is also pigeonhole, as we prove in
\cite[Theorem 8.4]{FMN-II}, so the \cmso\ theory of bicircular matroids with
branch-width at most $\lambda$ is decidable.

\subsection{Undecidable theories}
We also have some results that allow us to prove results in the
negative direction, by showing that certain classes have undecidable theories.

Assume that $F$ is a flat of the matroid $M$, and let
$M'$ be a single-element extension of $M$.
Let $e$ be the element in $E(M')-E(M)$.
We say that $M'$ is a \emph{principal extension} of $M$
by $F$ if $F\cup e$ is a flat of $M'$ and whenever
$X\subseteq E(M)$ spans $e$ in $M'$, it spans $F\cup e$.

Let $G$ be a simple graph with vertex set $\{v_{1},\ldots, v_{n}\}$
and edge set $\{e_{1},\ldots, e_{m}\}$.
Let $m(G)$ be the rank\dash $3$ sparse paving matroid with ground set
$\{v_{1},\ldots, v_{n}\}\cup\{e_{1},\ldots, e_{m}\}$.
The only non-spanning circuits of $m(G)$ are the
sets $\{v_{i},e_{k},v_{j}\}$, where $e_{k}$ is an
edge of $G$ joining the vertices $v_{i}$ and $v_{j}$.

\begin{theorem}
\label{sizzle}
Let \mcal{M} be a class of matroids
that contains all rank\dash $3$ uniform matroids,
and is closed under principal extensions.
The \mso\ theory of \mcal{M} is undecidable.
\end{theorem}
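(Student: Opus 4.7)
The plan is to reduce the undecidable \mso\ theory of finite simple graphs to the \mso\ theory of \mcal{M} using the map $G\mapsto m(G)$ from just before the statement. The first step is to see that every $m(G)$ lies in \mcal{M}. Starting from $U_{3,|V(G)|}\in\mcal{M}$, I process the edges of $G$ in any order: when the next edge $e_k=v_iv_j$ is added, the pair $\{v_i,v_j\}$ is a rank-two flat of the current matroid, because no previously processed edge has endpoints $\{v_i,v_j\}$ (as $G$ is simple), so principal extension along this flat adjoins $e_k$ with the new non-spanning circuit $\{v_i,e_k,v_j\}$. After $|E(G)|$ such steps the result is $m(G)$, hence lies in \mcal{M}.

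The second step is an \mso\ interpretation of graphs inside the matroids $m(G)$, restricting to simple graphs of minimum degree at least two. In such an $m(G)$, an element is an \emph{edge element} iff it is a singleton contained in exactly one 3-element circuit, and a \emph{vertex element} iff it is a singleton contained in at least two 3-element circuits; incidence of a vertex with an edge corresponds to the two singletons lying in a common 3-element circuit. Each ingredient (being a 3-circuit, being a singleton, ``exactly one'' versus ``at least two'') is \mso\dash expressible, so the standard interpretation yields, for every \mso\ graph sentence $\phi$, a set-system sentence $\phi^{*}$ such that $G\models\phi$ iff $m(G)\models\phi^{*}$ whenever $G$ has minimum degree at least two.

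The third step is to write a single \mso\ sentence $\psi$ whose models are exactly the matroids isomorphic to $m(G)$ for $G$ a simple graph of minimum degree at least two. I would have $\psi$ assert the matroid axioms, rank three, loop-freeness and the absence of parallel elements, the sparse-paving property that every 3-element set is independent or a 3-circuit, that every ground element is either a vertex element or an edge element in the above sense (and not both), that every 3-element circuit has the form $\{v,e,v'\}$ with two vertex elements and one edge element, and that no pair of vertex elements lies in more than one 3-element circuit. Then for any \mso\ graph sentence $\phi$, the sentence $\psi\to\phi^{*}$ belongs to the \mso\ theory of \mcal{M} iff $\phi$ belongs to the \mso\ theory of finite simple graphs of minimum degree at least two: matroids in \mcal{M} failing $\psi$ satisfy the implication vacuously, while those satisfying $\psi$ are precisely the $m(G)$ for such $G$, all of which lie in \mcal{M} by the first step.

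Finally, the \mso\ theory of finite simple graphs is undecidable (essentially Trakhtenbrot's theorem applied to \mso), and undecidability persists when restricted to minimum degree at least two: given an arbitrary simple graph $G$, attaching a fresh private triangle at each vertex (two new degree-two vertices adjacent to each other and to the original vertex) yields a simple graph of minimum degree at least two whose original vertex set and edge relation are \mso\dash recoverable, providing the required padding reduction. Combined with the previous reduction, a decision procedure for the \mso\ theory of \mcal{M} would decide the \mso\ theory of finite simple graphs, a contradiction. The principal technical hurdle is assembling $\psi$ so that its models are \emph{exactly} the matroids of the form $m(G)$ for $G$ of minimum degree at least two: the vertex/edge dichotomy has to be detected correctly by the ``at least two versus exactly one 3-circuit'' test, and the rank, simplicity, sparse-paving, and ``no multi-edges'' conditions must all be layered on to exclude rival rank-three matroids.
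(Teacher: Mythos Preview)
Your argument is correct and follows the same overall strategy as the paper---interpret simple graphs inside rank-$3$ matroids in \mcal{M} built by principal extensions from a uniform matroid---but the implementation differs. Rather than working with $m(G)$ and distinguishing vertex elements from edge elements by counting how many $3$-circuits contain them (which forces the minimum-degree-two restriction and then the padding reduction), the paper passes to $m^{+}(G)$, obtained from $m(G)$ by adding a parallel copy to every vertex element via principal extension along the corresponding rank-$1$ flat. Vertex elements are then exactly the elements lying in a $2$-element circuit, and edge elements are the rest; this works uniformly for every simple graph $G$ with at least three vertices, so no degree hypothesis and no padding step are needed, and the \mso\ axiomatisation of the class $\{m^{+}(G)\}$ is correspondingly short. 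Your route is a little longer but equally valid; the one place to tighten is the triangle padding, where an originally isolated vertex of $G$ becomes indistinguishable in $G'$ from its two attached neighbours (all three form an isolated $K_{3}$ of degree-$2$ vertices), so recovering $G$ inside $G'$ requires either an existential set quantifier selecting one representative per isolated $K_{3}$, or a slightly heavier gadget (attaching two triangles, say) that forces original vertices to have strictly larger degree than new ones.
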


\begin{proof}
We let $G$ be a simple graph, and we let $m^{+}(G)$ be the
matroid obtained from $m(G)$ by placing a new element parallel to each
`vertex' element $v_{i}$.
It is easy to check that $m^{+}(G)$ is contained in \mcal{M}
for every simple graph $G$.
Moreover, we can characterise the matroids that are equal to
$m^{+}(G)$ for some graph $G$ in the following way.
Let $M$ be a matroid.
Then $M$ is equal to $m^{+}(G)$ for some graph $G$ (with at least three vertices)
if and only if the following properties hold:
\begin{enumerate}[label=\textup{(\roman*)}]
\item $r(M)=3$,
\item $M$ is loopless, and any rank\dash $1$ flat has cardinality one or two,
\item any rank\dash $2$ flat that contains at least three rank\dash $1$ flats
contains exactly three such flats, one of cardinality one, and two of cardinality two,
\item if $x$ is an element that is not in a parallel pair, then $x$ is in exactly one
rank\dash $2$ flat that contains three rank\dash $1$ flats.
\end{enumerate}
From this it is clear that there is an \mso\ sentence, $\tau$, such that
a matroid satisfies $\tau$ if and only if it is isomorphic to
$m^{+}(G)$ for some simple graph $G$.

Now we consider the logical language, \msone, for graphs.
In this language, we can quantify over variables that represent
vertices, and variables that represent subsets of vertices.
We have a binary predicate that expresses when a vertex is in
a set of vertices, and another that expresses when two vertices are
adjacent.
Let $\psi$ be a sentence in \msone.
There is a corresponding sentence, $\psi'$, in \mso\ such that
a simple graph, $G$, satisfies $\psi$ if and only if $m^{+}(G)$
satisfies $\psi'$.
Let $\vertex(X)$ stand for an \mso\ formula expressing the fact that
$X$ is a $2$\dash element circuit.
To construct $\psi'$, we make the following interpretations:
\begin{enumerate}[label=\textup{(\roman*)}]
\item when $v$ is a vertex variable, we replace $\exists v$ with $\exists X_{v} \vertex(X_{v}) \land $, and replace $\forall v$ with
$\forall X_{v} \vertex(X_{v})\to$.
\item when $V$ is a set variable, we replace $\exists V$ with
\begin{multline*}
\rule{32pt}{0pt}\exists X (\forall X_{1} (\sing(X_{1}) \land X_{1}\subseteq X)\to\\
\exists X_{2} (X_{1}\subseteq X_{2}\land X_{2}\subseteq X\land \vertex(X_{2})))\ \land
\end{multline*}
and we replace $\forall V$ with
\begin{multline*}
\rule{32pt}{0pt}\forall X (\forall X_{1} (\sing(X_{1}) \land X_{1}\subseteq X)\to\\
\exists X_{2} (X_{1}\subseteq X_{2}\land X_{2}\subseteq X\land \vertex(X_{2})))\ \to
\end{multline*}
\item we replace $v\sim v'$ (the adjacency predicate) with
an \mso\ formula saying
that $X_{v}\cup X_{v'}$ is not a flat.
\end{enumerate}

Trahtenbrot's Theorem shows that there is no finite procedure for
deciding whether there is a finite structure that satisfies a given
first-order sentence.
A corollary is that the first-order theory of finite simple graphs
is undecidable (see \cite[Theorem 6.2.2]{Lib04}).
Since any first-order sentence is also a sentence in \msone, it follows that
the \msone\ theory of finite simple graphs is undecidable.
Imagine that the \mso\ theory of \mcal{M} is decidable.
Then for any \msone\ sentence, $\psi$, we could decide
if $\tau\to \psi'$ is a theorem for \mcal{M}.
This is equivalent to deciding whether $\psi$ is a theorem for
all simple graphs, so we have contradicted Trahtenbrot's result.
\end{proof}

The argument in \cite[Proposition 6.1]{FMN-II} shows that the class of
(strict) gammoids is closed under principal extensions.
The next result follows easily.

\begin{corollary}
\label{pulpit}
Let \mbb{K} be an infinite field.
The \mso\ theory of rank\dash $3$ \mbb{K}\dash representable matroids
is undecidable.
The \mso\ theories of rank\dash $3$ cotransversal matroids
and gammoids are undecidable.
Therefore the \mso\ theory of corank\dash $3$ transversal
matroids is undecidable.
\end{corollary}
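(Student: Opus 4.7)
The plan is to derive the first three claims of \Cref{pulpit} as direct applications of \Cref{sizzle}, and then to deduce the corank-$3$ transversal case from the cotransversal case by \mso-duality, in the style of \Cref{dynamo}. So for each of the three classes mentioned in the first two sentences, it suffices to verify that the class (i) contains every rank-$3$ uniform matroid $U_{3,n}$, and (ii) is closed under principal extensions (which preserve rank and hence keep us inside the rank-$3$ subclass).

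For rank-$3$ \mbb{K}-representable matroids with \mbb{K} infinite, every $U_{3,n}$ lies in the class because \mbb{K} being infinite allows one to choose $n$ vectors in general position in $\mbb{K}^{3}$. For principal extensions: given a representation of $M$ in $\mbb{K}^{3}$ and a flat $F$ of $M$ with span $V_{F} \subseteq \mbb{K}^{3}$, pick a vector $v \in V_{F}$ lying outside each of the finitely many proper subspaces of $V_{F}$ spanned by flats of $M$ strictly contained in $F$; since \mbb{K} is infinite, a finite union of proper subspaces of $V_{F}$ cannot cover $V_{F}$, so such a $v$ exists, and adjoining $v$ realises the principal extension. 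For the classes of rank-$3$ cotransversal matroids and rank-$3$ gammoids, the remark immediately preceding \Cref{pulpit} attributes closure of (strict) gammoids under principal extensions to \cite[Proposition 6.1]{FMN-II}. Every $U_{3,n}$ is transversal (take three copies of the ground set as the presentation), hence its dual $U_{n-3,n}$ is transversal as well, and so $U_{3,n}$ is simultaneously a transversal matroid, a cotransversal matroid (equivalently, a strict gammoid), and a gammoid. \Cref{sizzle} therefore applies to all three classes.

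For the final statement, observe that the corank-$3$ transversal matroids are exactly $\{M^{*} : M \text{ is a rank-}3\text{ cotransversal matroid}\}$, since a strict gammoid of rank $3$ on $n$ elements dualises to a transversal matroid of rank $n-3$, i.e.\ corank $3$. As in the proof of \Cref{dynamo}, coindependence is expressible in \mso\ via the formula $\operatorname{Coind}$, so given any \mso\ sentence $\psi$ we may construct an \mso\ sentence $\psi^{*}$ by replacing every atomic subformula $\ind(X_{i})$ with $\operatorname{Coind}(X_{i})$; then $M \models \psi$ if and only if $M^{*} \models \psi^{*}$. A Turing Machine deciding the \mso\ theory of corank-$3$ transversal matroids would therefore, via the computable map $\psi \mapsto \psi^{*}$, yield a decision procedure for the \mso\ theory of rank-$3$ cotransversal matroids, contradicting the preceding paragraph. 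The only non-bookkeeping step is the infinite-field argument for principal extensions in the representable case, which is the standard linear-algebraic avoidance argument; everything else is packaged by \Cref{sizzle}, \Cref{dynamo}, and the cited \cite[Proposition 6.1]{FMN-II}.
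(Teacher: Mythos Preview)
Your proof is correct and follows the same route as the paper, which simply notes that (strict) gammoids are closed under principal extensions and says the corollary ``follows easily'' from \Cref{sizzle}. You have supplied the details the paper omits: the explicit infinite-field avoidance argument for \mbb{K}-representability, the check that $U_{3,n}$ lies in each class, and the \mso-duality reduction (via $\operatorname{Coind}$) for the corank-$3$ transversal case.
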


\Cref{pulpit} is complementary to a result by Hlin\v{e}n\'{y} and Seese
\cite[Theorem 7.2]{HS06}, who have shown that the \mso\ theory of spikes is
undecidable.
Although every spike has branch-width three, spikes
have unbounded rank, and are not representable over a
common field.

\begin{remark}
If $L\subseteq L'$ are languages, and a class of objects has an undecidable
$L$ theory, then it obviously follows that it has an undecidable $L'$ theory.
So any class of matroids with an undecidable \mso\ theory has an undecidable
\cmso\ theory.
\end{remark}

\subsection{Definability and pigeonhole classes}
We will now show that definability and the pigeonhole property
are independent of each other by exhibiting classes that have exactly one of these
properties.
One case is easy: the class of sparse paving matroids can 
be defined by insisting that every non-spanning circuit is
a hyperplane.
But \cite[Lemma 4.1]{FMN-II} implies this class is not pigeonhole.

For the other case, we consider \emph{polygon} matroids.
Let $C_{n}$ be the rank\dash $3$ sparse paving matroid with
ground set $\{e_{1},\ldots ,e_{2n}\}$ and
non-spanning circuits
\[
\{e_{1},e_{2},e_{3}\},\{e_{3},e_{4},e_{5}\},\ldots,
\{e_{2n-3},e_{2n-2},e_{2n-1}\},\{e_{2n-1},e_{2n},e_{1}\}.
\]
The polygon matroids form a well-known infinite antichain
\cite[Example 14.1.2]{Oxl11}.
Next we consider \emph{path} matroids.
They too are rank\dash $3$ sparse paving matroids.
The ground set of a path matroid can be ordered
$e_{1},\ldots, e_{n}$ in such a way that
any non-spanning circuit is a set of three consecutive
elements in the ordering.
Note that the sparse paving property implies that if
$\{e_{i},e_{i+1},e_{i+2}\}$ is a non-spanning circuit, then
$\{e_{i+1},e_{i+2}, e_{i+3}\}$ is not.
Every rank\dash $3$ proper minor of a polygon matroid is a path matroid,
and every path matroid is a minor of a polygon matroid.

\begin{remark}
\label{spread}
Path matroids can be characterised as follows:
they are rank-three matroids where every non-spanning circuit
contains exactly three elements, and no element is in more than
two non-spanning circuits.
Moreover, any non-spanning circuit contains an element that is in
exactly one non-spanning circuit.
Finally, if $\{u,v,w\}$ is a non-spanning circuit, and $w$ is in exactly one
non-spanning circuit, then there is a partition, $(U,V)$, of $E(M)-w$
with $u\in U$ and $v\in V$, where $\{u,v,w\}$ is the only non-spanning circuit
containing elements of both $U$ and $V$.
This characterisation shows that the class of path matroids is \mso\dash definable.
Furthermore, matroids with rank at most two can be characterised by saying that any
subset containing three pairwise disjoint singleton sets is dependent.
Therefore the minor-closed class consisting of path matroids and all
matroids with rank at most two is definable.
It has an infinite number of excluded minors, since every
polygon matroid is an excluded minor.
\end{remark}

\begin{proposition}
\label{dubber}
Let \mcal{M} be the class containing all polygon and path matroids, and
all matroids of rank at most two.
Then \mcal{M} has bounded decomposition-width.
\end{proposition}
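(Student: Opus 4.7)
The plan is to handle the three subclasses separately---matroids of rank at most $2$, path matroids, and polygon matroids---exhibit a decomposition of bounded width in each case, and take the maximum bound as the uniform bound for \mcal{M}.

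For rank at most $2$, \Cref{effect} already gives $\dw(M) \leq 5$.

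Let $M$ be a path matroid with ordered ground set $e_1,\ldots, e_n$, so that every non-spanning circuit has the form $\{e_i,e_{i+1},e_{i+2}\}$. I use a caterpillar decomposition: a central path $v_1,\ldots,v_{n-2}$ of internal vertices, with $e_1,e_2$ attached to $v_1$, with $e_{n-1},e_n$ attached to $v_{n-2}$, and with $e_{i+1}$ attached to $v_i$ for $2\leq i\leq n-3$. Each displayed set is either a singleton $\{e_i\}$ (which trivially yields a constant number of $\sim$-classes) or a prefix $U_k=\{e_1,\ldots,e_k\}$. Since $M$ is a rank-$3$ sparse paving matroid, a subset is independent precisely when it has at most three elements and is not a designated $3$-circuit. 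Hence for $X\subseteq U_k$ the class of $X$ under $\sim_{U_k}$ is determined by: whether $X$ is already dependent in $M$; the cardinality $|X|$; and the collection $S_X=\{Z\subseteq E-U_k : |X|+|Z|=3 \text{ and } X\cup Z \text{ is a designated circuit}\}$. The only designated circuits meeting both $U_k$ and $E-U_k$ are (at most) $\{e_{k-1},e_k,e_{k+1}\}$ and $\{e_k,e_{k+1},e_{k+2}\}$. A routine case analysis on $|X|\in\{0,1,2,3\}$ then shows that $S_X$ takes only a bounded number of values, giving a universal constant bound on the number of equivalence classes.

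For a polygon matroid $C_n$, I apply the same caterpillar decomposition to the linear order $e_1,\ldots,e_{2n}$. The only additional designated circuit that can cross the boundary is the wrap-around $\{e_{2n-1},e_{2n},e_1\}$, relevant exactly when $e_1\in U_k$ and $\{e_{2n-1},e_{2n}\}\subseteq E-U_k$. This adds at most one further possibility to $S_X$, so the number of $\sim_{U_k}$-equivalence classes is still bounded by a (slightly larger) universal constant.

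The main technical step is the enumeration verifying that the three data listed---dependence, $|X|$, and $S_X$---really do determine $\sim_{U_k}$-equivalence, but this is immediate from the characterisation of independence in a rank-$3$ sparse paving matroid. The crucial structural point is that, because the non-spanning circuits of $M$ lie along a path or a cycle, a caterpillar decomposition respecting this order forces only a bounded number of circuits to cross any displayed separation, which in turn bounds the number of equivalence classes uniformly across all of \mcal{M}.
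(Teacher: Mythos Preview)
Your proof is correct and follows essentially the same approach as the paper: both use a caterpillar-style decomposition respecting the natural linear (or cyclic) order on the ground set, then argue that only a bounded number of non-spanning circuits can cross any displayed separation, which in a rank-$3$ sparse paving matroid immediately bounds the number of $\sim_{U}$-classes. The paper treats polygon and path matroids together in one sentence (``at most two non-spanning circuits contain elements from both $U$ and $V$'') and leaves the class-count verification as ``straightforward'', whereas you separate the two cases and spell out the invariants (dependence, $|X|$, and the set $S_{X}$) that determine the class---but the underlying idea is identical.
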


\begin{proof}
It is easy to see that if $M$ is a matroid with rank zero or one, then
$\dw(M)\leq 3$.
Now assume that $M$ is a rank\dash $2$ matroid.
Let $(T,\varphi)$ be a decomposition of $M$ such that if
$(U,V)$ is a displayed partition, then
no more than one parallel class of $M$ intersects both $U$ and $V$.
It is clear that such a decomposition exists.
Now we can easily verify that $\dw(M)\leq 5$.

Let $M$ be a rank\dash $3$ matroid in \mcal{M}.
It is easy to see that there is an ordering $e_{1},\ldots, e_{n}$
of $E(M)$ such that for any partition
$(U,V)=(\{e_{1},\ldots, e_{t}\},\{e_{t+1},\ldots, e_{n}\})$,
at most two non-spanning circuits contain elements
from both $U$ and $V$.
We let $(T,\varphi)$ be a decomposition that displays only
partitions of this type.
It is straightforward to verify that this type of decomposition
leads to an upper bound on the decomposition-width of all
rank\dash $3$ matroids in \mcal{M}.
\end{proof}

Now we can prove the existence of a minor-closed class that is
pigeonhole without being definable.

\begin{lemma}
\label{golfer}
There is a minor-closed class, \mcal{M}, of matroids with the
following properties.
Each matroid in \mcal{M} has rank (and therefore branch-width)
at most three.
Furthermore \mcal{M} has bounded decomposition-width, and is
therefore pigeonhole.
However, \mcal{M} has an undecidable \mso\ theory, so
\mcal{M} is not definable.
\end{lemma}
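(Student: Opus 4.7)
I would take $S \subseteq \{n \in \mathbb{Z} : n \geq 3\}$ to be any recursively enumerable but non-recursive set of integers, and let $\mathcal{M}$ be the smallest minor-closed class of matroids containing $\{C_n : n \in S\}$. Being minor-closed is built in, and since every $C_n$ has rank $3$ and minors never increase rank, every $M \in \mathcal{M}$ has rank at most $3$, and hence $\bw(M) \leq 3$.

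The next step is the structural bound. Every member of $\mathcal{M}$ is a minor of some $C_n$ with $n \in S$, so it is either $C_n$ itself, a proper rank-$3$ minor of $C_n$ (hence a path matroid, by the description given before \Cref{spread}), or a matroid of rank at most two. All such matroids lie in the class covered by \Cref{dubber}, so there is an absolute constant $K$ with $\dw(M) \leq K$ for every $M \in \mathcal{M}$. In particular, $\mathcal{M}$ is pigeonhole.

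Then I would establish the key combinatorial fact that $C_n \in \mathcal{M}$ iff $n \in S$. The ``if'' direction is immediate. For ``only if'', every rank-$3$ proper minor of a polygon matroid is a path matroid, while $C_n$ is not a path matroid: the intersection graph of its non-spanning circuits is an $n$-cycle, whereas the analogous graph of any path matroid is a subgraph of a path. Hence $C_n$ cannot arise as a proper minor of any $C_m$, nor as any minor of a path or rank-$\leq 2$ matroid; so it appears in $\mathcal{M}$ only if it was one of the generators.

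Finally I would finish via an interpretation argument. For each $n \geq 3$, construct an \mso-sentence $\chi_n$, computable from $n$, that is satisfied exactly by matroids isomorphic to $C_n$: existentially quantify over $2n$ distinct singletons $X_1,\ldots,X_{2n}$ whose union is the ground set, assert that each of the $n$ sets $X_{2i-1}\cup X_{2i}\cup X_{2i+1}$ (indices modulo $2n$) is a non-spanning circuit, and that every other $3$-element subset is independent. Then $\neg\chi_n$ lies in the \mso-theory of $\mathcal{M}$ iff no matroid in $\mathcal{M}$ is isomorphic to $C_n$ iff $n \notin S$. A decision procedure for the theory of $\mathcal{M}$ would therefore decide the complement of $S$, contradicting non-recursiveness. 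Hence the theory is undecidable, and the contrapositive of \Cref{quaker} shows that $\mathcal{M}$ is not \mso-definable. The only delicate point is the combinatorial separation claim that $C_n$ cannot appear as an unintended minor; everything else is bookkeeping on top of \Cref{dubber} and \Cref{quaker}.
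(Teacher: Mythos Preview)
Your argument is correct, but it takes a different route from the paper's. The paper argues non-constructively by cardinality: there are uncountably many classes of the form ``all rank-$\leq 2$ matroids, all path matroids, and \emph{some} subset of the polygon matroids'', each with bounded decomposition-width by \Cref{dubber}, but only countably many Turing machines. Hence two distinct such classes $\mcal{M}_{1}\ne\mcal{M}_{2}$ would share a decision procedure and therefore satisfy the same \mso\ sentences; yet the sentence ``this matroid is not isomorphic to $P$'' (for the polygon $P$ in their symmetric difference) separates them. So at least one such class has undecidable theory.

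Your version is explicitly constructive: you fix a non-recursive $S$, build $\mcal{M}$ from $\{C_{n}:n\in S\}$, and reduce membership in $S$ to the theory of $\mcal{M}$ via the computable family of sentences $\chi_{n}$. This buys you a concrete witness class (relative to a concrete non-recursive $S$) rather than mere existence, at the cost of needing the combinatorial separation step that $C_{n}\in\mcal{M}$ iff $n\in S$; the antichain property of polygon matroids, already recorded in the paper, makes that step routine. The paper's counting argument is shorter and avoids this step entirely, but is purely existential. Both invoke \Cref{dubber} for the decomposition-width bound, and both finish the non-definability claim via \Cref{carafe} or \Cref{quaker}. A minor remark: you do not actually use recursive enumerability of $S$, only its non-recursiveness.
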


\begin{proof}
We consider the classes consisting of all matroids with rank at most two,
all path matroids, and some subset of the polygon matroids.
The fact that these classes have bounded decomposition-width
follows from \Cref{dubber}.
The number of such classes is the cardinality of the power set of the
natural numbers, so there are uncountably many such classes.
Assume that every such class has a decidable theory.
There are countably many Turing Machines, so
we let $\mcal{M}_{1}$ and $\mcal{M}_{2}$ be two distinct such classes,
such that exactly the same Turing Machine decides the theories of
$\mcal{M}_{1}$ and $\mcal{M}_{2}$.
Therefore $\mcal{M}_{1}$ and $\mcal{M}_{2}$ satisfy exactly the same
\mso\ sentences.
Without loss of generality, we can let $P$ be a polygon matroid in
$\mcal{M}_{1}$ but not in $\mcal{M}_{2}$.
It is easy to express the statement
`this matroid is not isomorphic to $P$' in \mso.
This sentence is a theorem for $\mcal{M}_{2}$, but not for
$\mcal{M}_{1}$, so we have
a contradiction.
Therefore there are classes that have undecidable theories.
Any class fails to be definable by virtue of \Cref{quaker}.
\end{proof}

\subsection{Open problems}

We start this section of open problems by recollecting \Cref{spring}.

\begin{conjecture}
Let \mcal{M} be a strongly pigeonhole class of matroids.
Then $\{M^{*}\colon M\in\mcal{M}\}$ is strongly pigeonhole.
\end{conjecture}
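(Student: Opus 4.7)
The plan is to translate $\sim_U$ in $M^*$ into a spanning-type equivalence in $M$ via a complement bijection, and then identify a matroid-valued invariant on $2^{U}$ whose range is controlled by the number of $\sim_U$-classes in $M$.

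Let $M\in \mcal{M}$, let $V=E(M)\setminus U$, and suppose $\lambda_{M^{*}}(U)\leq \lambda$. Since $\lambda_{M^{*}}(U)=\lambda_{M}(U)$, the hypothesis bounds the number of $\sim_{U}$-classes in $M$ by $\pi(\lambda)$. Using the fact that $X\cup Z\in \mcal{I}(M^{*})$ exactly when $(U-X)\cup(V-Z)$ spans $M$, the complement bijection $X\mapsto Y:=U-X$ on $2^{U}$ carries $\sim_{U}$ in $M^{*}$ to the \emph{spanning equivalence} $\approx_{U}$ on $2^{U}$: $Y\approx_{U}Y'$ iff for every $W\subseteq V$, $Y\cup W$ spans $M$ exactly when $Y'\cup W$ does. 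So it suffices to bound the number of $\approx_{U}$-classes.

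For each $Y\subseteq U$ introduce the matroid $P(Y):=M|(V\cup Y)/Y$ on $V$. The identity $r_{M}(Y\cup W)=r_{M}(Y)+r_{P(Y)}(W)$ shows that $Y\cup W$ spans $M$ iff $r_{P(Y)}(W)=r(M)-r_{M}(Y)$. I would next prove two short lemmas. First, if $B$ is a basis of $Y$ in $M$, then $P(Y)=P(B)$: this holds because $\cl_{M}(B)=\cl_{M}(Y)$ forces $(M/B)|(E-Y)=M/Y$, and further restriction to $V\subseteq E-Y$ preserves equality. Second, when $Y\cup V$ spans $M$, we have $r(P(Y))=r_{M}(Y\cup V)-r_{M}(Y)=r(M)-r_{M}(Y)$, so $r_{M}(Y)$ is determined by $P(Y)$ and the set of $W$ with $Y\cup W$ spanning is precisely the family of spanning subsets of $P(Y)$; if $Y\cup V$ does not span $M$, then no $W$ works, so all such $Y$ lie in one common $\approx_U$-class.

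Since a matroid is determined by its family of spanning sets, the second lemma shows that for $Y$ with $Y\cup V$ spanning $M$ the $\approx_{U}$-class of $Y$ is determined by $P(Y)$, and all $Y$ with $Y\cup V$ non-spanning lie in one additional class. By the first lemma, the image of $Y\mapsto P(Y)$ on $2^{U}$ coincides with its image on independent subsets of $U$; and for $Y,Y'$ both independent in $M$, the characterisation $Y\sim_{U}Y'$ iff $(M/Y)|V=(M/Y')|V$ gives $Y\sim_{U}Y'$ iff $P(Y)=P(Y')$. Therefore this image has size at most the number of $\sim_{U}$-classes in $M$, which is at most $\pi(\lambda)$. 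Altogether, $\approx_{U}$ has at most $\pi(\lambda)+1$ classes, so $\{M^{*}\colon M\in \mcal{M}\}$ is strongly pigeonhole with $\pi^{*}(\lambda)=\pi(\lambda)+1$. The main creative step is identifying $P(Y)$ as the right invariant and noticing its closure-invariance; the main obstacle one might encounter is the temptation to match $\sim_{U}$-classes in $M$ and $M^{*}$ directly, which ignores the spanning-vs-independence asymmetry of duality and does not yield a uniform bound.
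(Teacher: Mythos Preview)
The paper does not prove this statement; it is explicitly posed as an open conjecture (the authors write ``We do not know if the dual of a strongly pigeonhole class must be strongly pigeonhole''). So there is no proof in the paper to compare against.

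Your argument appears to be correct and, if so, resolves the conjecture. The steps all check out: the complement bijection $X\mapsto U-X$ indeed converts $\sim_{U}$ in $M^{*}$ into the spanning equivalence $\approx_{U}$ in $M$; the identity $r_{M}(Y\cup W)=r_{M}(Y)+r_{P(Y)}(W)$ holds with $P(Y)=(M/Y)|V$; the closure-invariance $P(Y)=P(B)$ for any basis $B$ of $Y$ follows since $r_{M}(W\cup Y)=r_{M}(W\cup B)$ and $r_{M}(Y)=r_{M}(B)$; and for independent $B,B'\subseteq U$ one has $B\sim_{U}B'$ in $M$ if and only if $P(B)=P(B')$, because $B\cup Z$ is independent in $M$ exactly when $Z$ is independent in $P(B)$. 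Only the forward implication $B\sim_{U}B'\Rightarrow P(B)=P(B')$ is needed for the counting, and it gives $|\{P(Y):Y\subseteq U\}|=|\{P(B):B\subseteq U\ \text{independent}\}|\le\pi(\lambda)$. Together with the single extra class for $Y$ with $Y\cup V$ non-spanning, you obtain $\pi^{*}(\lambda)=\pi(\lambda)+1$.

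The key insight the authors seem to have missed is that the map $Y\mapsto P(Y)=(M/Y)|V$ simultaneously (i) controls the spanning equivalence, (ii) is closure-invariant so its range is realised on independent sets, and (iii) on independent sets is exactly the invariant that governs $\sim_{U}$ in $M$. This is a nice, short, and apparently new argument.
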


The class of lattice path matroids has infinitely many
excluded minors \cite{Bon10}.
Despite this, we make the following conjecture.

\begin{conjecture}
\label{shnook}
The class of lattice path matroids can be characterised by
a sentence in \mso.
\end{conjecture}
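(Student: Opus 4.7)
The plan is to construct an \mso\ sentence $\psi$ that characterises lattice path matroids (LPMs) through the distinctive structure of their cyclic-flat lattice, avoiding any need to quantify over linear orders on the ground set directly (which \mso\ on matroids does not permit). Cyclic flats are \mso-definable as flats that equal the union of the circuits they contain, so the inclusion lattice of cyclic flats of $M$ is accessible via quantification over set variables representing flats.

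The key structural input, following Bonin and de Mier, is that $M$ is an LPM precisely when there exists a linear order $\le$ on $E(M)$ with respect to which every cyclic flat is an interval, and the family of such intervals is laminar in the specific nested fashion dictated by the two bounding lattice paths. The first step is to prove that this order is \emph{canonical}: it is determined, up to global reversal and permutations within parallelism classes, by the sequence of set differences of cyclic flats of consecutive ranks along any maximal chain in the cyclic-flat lattice. The second step is to translate the interval and laminarity conditions into \mso\ statements over pairs of cyclic flats, which becomes possible once the order is implicit rather than existentially quantified. The third step is the converse: from any matroid satisfying $\psi$, one should reconstruct an explicit pair of lattice paths from maximal chains of cyclic flats and verify that the resulting LPM is isomorphic to $M$.

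The principal obstacle is that \mso\ on matroids offers no first-class access to linear orders on $E(M)$. The entire approach therefore hinges on the delicate combinatorial claim that the required LPM order is both canonical \emph{and} recoverable in an \mso-definable way from the cyclic-flat structure, a claim that is not known to hold in the form required. A secondary difficulty is that the class of transversal matroids is not currently known to be \mso-definable, so \Cref{shnook} cannot be approached by first characterising transversal matroids and then cutting down; the sentence $\psi$ must isolate LPMs within all matroids in a single stroke, likely by exploiting the tight geometric constraints unique to the lattice-path setting, such as the particular way in which the infinite family of excluded minors found by Bonin can be ruled out by a bounded-quantifier-depth condition on cyclic flats.
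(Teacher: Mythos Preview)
The statement you are attempting to prove is presented in the paper as a \emph{conjecture}, not a theorem. It appears in the subsection on open problems, and the paper offers no proof whatsoever; the authors simply state the conjecture and note that, combined with their pigeonhole result for lattice path matroids, it would imply decidability of the \mso\ theory of lattice path matroids with bounded branch-width. There is therefore no ``paper's own proof'' to compare your proposal against.

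As for the proposal itself, it is not a proof but a research outline, and you acknowledge as much. The central gap is exactly the one you identify: you need the linear order witnessing the lattice-path structure to be recoverable in an \mso-definable way from the cyclic-flat lattice, and you concede this ``is not known to hold in the form required.'' Without that step the entire construction collapses, since \mso\ over matroids cannot existentially quantify a linear order on the ground set. Your third step (the converse direction) is also only gestured at. So what you have written is a plausible line of attack on an open problem, not a proof, and it should be presented as such rather than as a proof proposal.
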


We have proved that the class of lattice path matroids is pigeonhole
\cite[Theorem 7.2]{FMN-II}, so \Cref{shnook}, along with \Cref{quaker}, would
imply the decidability of the \mso\ theory of lattice path matroids
with bounded branch-width.

The following conjecture would imply that any minor-closed class of
\hgg\ matroids can be characterised with a sentence in \mso, when $H$
is a finite group.

\begin{conjecture}
\label{roadie}
Let $H$ be a finite group, and let \mcal{M} be a minor-closed class
of \hgg\ matroids.
Then \mcal{M} has only finitely many excluded minors.
\end{conjecture}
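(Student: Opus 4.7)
The plan is to reduce Conjecture~\ref{roadie} to a well-quasi-ordering (WQO) statement. If the class of all \hgg\ matroids for a fixed finite group $H$ is WQO under the minor relation, then the excluded minors of any minor-closed subclass $\mcal{M}$ form an antichain in the ambient WQO, and so must be finite. This is the same line of reasoning that derives the Robertson--Seymour corollary on graph minor-closed classes from graph WQO, and it is the only plausible general route since we are not aware of a combinatorial characterisation of excluded minors that one could bound directly.

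First I would make precise the natural minor operations on $H$-gain graphs (edge deletion, edge contraction, and switching at a vertex by a group element), and verify the standard translation that minors of the frame matroid of a gain graph correspond to gain-graph minors taken up to switching equivalence. Next I would lift the problem to ordinary graphs via the canonical covering construction: a connected $H$-gain graph $(G,\varphi)$ has an $|H|$-fold derived cover $\tilde{G}$ equipped with a free $H$-action, and gain-graph minors of $(G,\varphi)$ correspond to $H$-equivariant minors of $\tilde{G}$. The task thus becomes: show that finite graphs carrying a free $H$-action are well-quasi-ordered under $H$-equivariant minors.

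The key step, and the main obstacle, is proving this equivariant WQO. Robertson--Seymour's proof for uncoloured graphs proceeds via a structure theorem for graphs excluding a fixed minor (tree-decomposition into almost-embeddable pieces) followed by Higman-style amalgamation of the bounded-complexity pieces. To carry this programme out equivariantly would require an $H$-equivariant structure theorem, which is not available in the literature for general finite $H$ and which I expect to be substantial work in its own right. A partial but easier alternative handles special cases: when $H$ embeds in the multiplicative group of a finite field $\mbb{F}$ (in particular, whenever $H$ is cyclic), every \hgg\ matroid is $\mbb{F}$-representable, so the conjecture in this case follows immediately from the announced WQO theorem for matroids representable over a fixed finite field \cite{GGW14}.

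The genuinely new difficulty is the non-abelian case, where representability over any single field fails, so one cannot appeal to Geelen--Gerards--Whittle as a black box. Here one must either develop the equivariant graph-minors machinery alluded to above, or find an argument tailored to gain graphs that bypasses the need for a full structure theorem---for instance, combining a decomposition of $3$-connected \hgg\ matroids (as used in \Cref{kibble}) with an inductive amalgamation lemma for the $2$-sum tree of a disconnected matroid. Either route would represent a substantial addition to the theory, and I would expect progress on Conjecture~\ref{roadie} to proceed in tandem with progress on the WQO of gain graphs themselves.
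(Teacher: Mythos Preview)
The statement you are addressing is a conjecture, not a theorem: the paper states it as an open problem in the final section and offers no proof. So there is no argument in the paper to compare your proposal against. Your proposal is, as you yourself make clear, a research outline rather than a proof, and you correctly flag the equivariant well-quasi-ordering step as the main obstruction.

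There is, however, a genuine gap in your reduction even before that step. You claim that well-quasi-ordering of the class of \hgg\ matroids would imply the conjecture because the excluded minors of a minor-closed subclass $\mcal{M}$ form an antichain in this well-quasi-order. But the excluded minors of $\mcal{M}$ need not be \hgg\ themselves: an excluded minor $N$ satisfies $N\notin\mcal{M}$ with every proper minor of $N$ in $\mcal{M}$, and nothing forces $N$ to be \hgg. Any non-\hgg\ excluded minor of $\mcal{M}$ is in fact an excluded minor for the full class of \hgg\ matroids, so to bound these you would separately need an analogue of Rota's conjecture for \hgg\ matroids---which is exactly the special case of Conjecture~\ref{roadie} where $\mcal{M}$ is the whole class. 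Well-quasi-ordering of \hgg\ matroids alone does not give this. (The analogous situation for finite fields requires both Rota's conjecture and the matroid well-quasi-ordering theorem, as announced in \cite{GGW14}.) Your observation about cyclic $H$ is correct and does yield the full conjecture in that case, since the result for \mbb{F}-representable matroids in \cite{GGW14} already packages both ingredients.
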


We also conjecture that the class of \hgg\ matroids is (efficiently)
pigeonhole (\cite[Conjecture 9.3]{FMN-II}).
This conjecture, combined with \Cref{roadie}, would imply
decidability for any minor-closed of \hgg\ matroids
with bounded branch-width.

DeVos, Funk, and Pivotto have proved that
if $H$ is an infinite group, then the class of \hgg\ matroids has infinitely
many excluded minors \cite[Corollary 1.3]{dVFP14}.
We conjecture a stronger property.

\begin{conjecture}
\label{gnomon}
Let $H$ be an infinite group.
The class of \hgg\ matroids cannot be
characterised with a sentence in \cmso.
\end{conjecture}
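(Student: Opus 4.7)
The plan is to adapt the strategy that the authors used in \cite{MNW18} for the analogous statement about \mbb{K}\dash representable matroids with \mbb{K} an infinite field. Suppose for contradiction that some \cmso sentence $\psi$ of quantifier rank $k$ characterises the class of \hgg\ matroids. The goal is then to produce two matroids $M$ and $N$, with $M$ \hgg\ and $N$ not \hgg, such that no \cmso sentence of quantifier rank at most $k$ can separate them, contradicting the supposed existence of $\psi$.

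To execute this, I would first set up an Ehrenfeucht--Fra\"iss\'e game appropriate to \cmso: a bijective game of $k$ rounds on set-systems, with auxiliary book-keeping for the modular counting predicates $|\cdot|_{p,q}$. The standard result is that if the Duplicator wins such a $k$\dash round game on $(M,N)$, then $M$ and $N$ satisfy precisely the same \cmso sentences of quantifier rank at most $k$. This framework does not depend on matroid structure and can be assembled from existing model-theoretic tools.

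The next step is the construction of the indistinguishable pair. Because $H$ is infinite, the work of DeVos, Funk, and Pivotto \cite{dVFP14} supplies an infinite antichain $\{N_{i}\}$ of excluded minors for the class of \hgg\ matroids. For each $N_{i}$, every proper minor is \hgg, so by a careful single-element modification one can cook up an \hgg\ matroid $M_{i}$ that agrees with $N_{i}$ on almost all of its ground set. To dilute any local witness of the failure of being \hgg, I would take a disjoint union (or repeated $2$\dash sum along a distinguished basepoint) of many copies of a common \hgg\ ``bulk'' matroid with either $M_{i}$ or $N_{i}$ plugged in at a single location; call these $M^{k}$ and $N^{k}$. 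The Duplicator's strategy exploits the many symmetric bulk copies: whichever element of the bad component the Spoiler picks, the Duplicator answers in a structurally identical bulk copy, and the parameter $k$ is bounded so the game runs out before the unique ``bad'' copy can be localised.

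The main obstacle will be the gadget construction in the previous paragraph, rather than the game-theoretic half. The hard part is finding excluded minors $N_{i}$, together with small perturbations $M_{i}\in\mathcal{M}$, whose difference is genuinely non-local: an \mso-type argument must fail to detect the discrepancy because no bounded-quantifier-rank formula can trace a long enough cycle through the gain graph to see that a product of gains is wrong. Packaging \cite{dVFP14} into a family with this non-locality property, and verifying that modular counting across the many bulk copies does not betray which copy is bad, is where essentially all of the work lies; the remainder of the argument is then a fairly mechanical application of the \cmso Ehrenfeucht--Fra\"iss\'e game and the hypothesis that $\psi$ has fixed quantifier rank.
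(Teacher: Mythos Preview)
The statement you are attempting to prove is a \emph{conjecture}, not a theorem: the paper offers no proof and explicitly lists it as open. Immediately after stating it, the authors remark that the techniques of \cite{MNW18} already settle the case where $H$ contains elements of arbitrarily high order, and that the conjecture remains open precisely when $H$ is an infinite group of finite exponent (their examples being $(\mbb{Z}/2\mbb{Z})^{\mbb{Z}^{+}}$ and Tarski monster groups). So there is no ``paper's own proof'' to compare against, and your proposal should be read as a plan of attack on an open problem rather than a reconstruction.

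With that framing, your outline has a concrete gap exactly where the paper says the difficulty lies. Your Duplicator strategy hinges on the assertion that ``no bounded-quantifier-rank formula can trace a long enough cycle through the gain graph to see that a product of gains is wrong.'' That heuristic is what makes the \cite{MNW18} argument go through when $H$ has elements of unbounded order: one can push the offending gain-product witness into arbitrarily long cycles. But if $H$ has finite exponent $e$, then every element satisfies $h^{e}=1$, so stretching a cycle does not by itself obscure whether a product is trivial; a short subword already determines the relevant relation. Your proposal does not explain how the excluded-minor antichain from \cite{dVFP14} can be massaged so that the obstruction to being \hgg\ is genuinely non-local in this bounded-exponent regime, and you yourself flag this as ``where essentially all of the work lies.'' Until that gadget construction is supplied, the argument does not go beyond the case the paper already regards as known, and the finite-exponent case --- the actual content of the conjecture --- remains untouched.
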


It is not too difficult to see that the techniques of \cite{MNW18} settle
this conjecture when $H$ contains elements of arbitrarily high order.
Thus it is open only in the case that $H$ is an infinite group
with finite exponent.
An easy example of such a group is the infinite direct product
$(\mbb{Z}/2\mbb{Z})^{\mbb{Z}^{+}}$, but there exist more sophisticated examples,
such as Tarski monster groups.

We also believe the following.

\begin{conjecture}
\label{mucker}
Let $H$ be an infinite group.
The class of rank\dash $3$ \hgg\ matroids has an undecidable
\mso\ theory.
\end{conjecture}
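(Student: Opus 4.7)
The plan is to derive \Cref{mucker} by invoking \Cref{sizzle}, exactly as \Cref{pulpit} was established. Let $\mcal{M}$ denote the class of rank-$3$ \hgg\ matroids. It suffices to check: (a) $\mcal{M}$ contains every rank-$3$ uniform matroid, and (b) $\mcal{M}$ is closed under principal extensions. \Cref{sizzle} then immediately yields undecidability of the \mso\ theory of $\mcal{M}$.

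For (a), I would realize $U_{3,n}$ as the frame matroid of an $H$-gain graph on a small number of vertices carrying $n$ edges in total (distributed among vertex-pairs, with loops added as needed to achieve rank exactly $3$). Because $H$ is infinite, one may assign gains to the edges so that no subset of fewer than four edges forms an unbalanced short cycle or a balanced parallel class; avoiding these finitely many forbidden products of group elements rules out all non-trivial rank-$1$ and rank-$2$ dependencies. The resulting frame matroid has the same independent sets as $U_{3,n}$.

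For (b), I would split into cases according to the rank of the flat $F$ by which the extension is taken. A principal extension by a rank-$1$ flat is a parallel extension, and is implemented by duplicating the relevant edge in the gain graph with the same gain. A principal extension by the whole ground set (a free coloop extension) is implemented by appending a loop carrying a non-identity gain at a newly added vertex, or by adding an edge whose gain creates a new unbalanced cycle with no short dependencies. The remaining case, principal extension by a rank-$2$ flat $\ell$, is the delicate one: here $\ell$ corresponds to a combinatorial substructure of the gain graph (typically a class of parallel edges between two vertices, or the union of edges of a balanced theta-configuration), and one must insert a new edge whose gain label places it freely on $\ell$ without creating any further collinearity.

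The main obstacle will be precisely this line-extension case, because it requires a description of all rank-$2$ flats arising in rank-$3$ \hgg\ matroids, together with a verification that a generic gain choice realizes exactly the desired new incidence. The argument should reduce to avoiding finitely many bad gain values (those that would close off a spurious balanced cycle involving pre-existing edges), which is possible since $H$ is infinite. Once (a) and (b) are established, \Cref{sizzle} delivers the conclusion.
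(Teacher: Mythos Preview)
This item appears in the paper as a \emph{conjecture}, not a theorem; the paper offers no proof, so there is nothing to compare your attempt against. More to the point, your plan via \Cref{sizzle} breaks down at step~(a): the class of rank-$3$ \hgg\ matroids does \emph{not} contain $U_{3,n}$ for large $n$, no matter how large $H$ is.

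The obstruction is structural. A connected rank-$3$ frame matroid arises from a biased graph that is either balanced on four vertices (in which case the matroid is graphic, and $U_{3,n}$ is non-graphic for $n\ge 5$) or unbalanced on three vertices. In the latter case, three parallel edges with pairwise distinct gains already form a contrabalanced theta and hence a $3$-element circuit of the frame matroid; together with the handcuff constraints on unbalanced loops this forces $n\le 6$. Equivalently, every simple rank-$3$ frame matroid sits inside an extension by a $3$-element basis $\{b_{1},b_{2},b_{3}\}$ with each point on one of the three coordinate lines $\overline{b_{i}b_{j}}$, and $U_{3,n}$ with $n\ge 7$ cannot be covered by three such lines. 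So your ``generic gains'' sketch cannot produce $U_{3,7}$. For the same reason hypothesis~(b) also fails: a principal extension by a rank-$2$ flat that is not a coordinate line would force the new element onto some coordinate line and hence onto an unwanted extra $3$-point line. A proof of \Cref{mucker}, if one exists, will require a genuinely different approach.
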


\section{Acknowledgements}

We thank Geoff Whittle for several important conversations, and the referees for their careful reading and helpful comments.
Funk and Mayhew were supported by a Rutherford Discovery Fellowship, managed by Royal Society Te Ap\={a}rangi.



\end{document}